\definecolor{black}{rgb}{0.0, 0.0, 0.0}
\definecolor{red}{rgb}{1.0, 0.5, 0.5}
\newcommand{\margnote}[1]{
\ifthenelse{\boolean{shownotes}}%
{\marginpar{\raggedright\tiny\texttt{#1}}}%
{}%
}
\newcommand{\hole}[1]{
\ifthenelse{\boolean{shownotes}}%
{\begin{center} \fbox{ \rule {.25cm}{0cm} \rule[-.1cm]{0cm}{.4cm}
\parbox{.85\textwidth}{\begin{center} \texttt{#1}\end{center}} \rule
{.25cm}{0cm}}\end{center}} {} }
\title[On regular solutions and singularity formation for Vlasov/Navier--Stokes equations]{On regular solutions and singularity formation for Vlasov/Navier--Stokes equations with degenerate viscosities and vacuum}
\author[Choi]{Young-Pil Choi}
\address[Young-Pil Choi]{\newline Department of Mathematics \newline Yonsei University, Seoul 03722, Republic of Korea}
\email{ypchoi@yonsei.ac.kr}
\author[Jung]{Jinwook Jung}
\address[Jinwook Jung]{\newline Research Institute of Basic Sciences \newline Seoul National University, Seoul  08826, Republic of Korea}
\email{warp100@snu.ac.kr}
\numberwithin{equation}{section}
\newtheorem{theorem}{Theorem}[section]
\newtheorem{lemma}{Lemma}[section]
\newtheorem{proposition}{Proposition}[section]
\newtheorem{remark}{Remark}[section]
\newtheorem{definition}{Definition}[section]
\newcommand{\nc}{\newcommand}
\newcommand{\R}{\mathbb R}
\newcommand{\om}{\Omega}
\newcommand{\ls}{\lesssim}
\newcommand{\T}{\mathbb T}
\newcommand{\N}{\mathbb N}
\newcommand{\mbs}{\mathbb S}
\newcommand{\bu}{{\bf u}}
\newcommand{\bq}{\begin{equation}}
\newcommand{\eq}{\end{equation}}
\newcommand{\lt}{\left}
\newcommand{\rt}{\right}
\newcommand{\mc}{\mathcal{C}}
\newcommand{\pa}{\partial}
\newcommand{\ml}{\mathcal{L}}
\newcommand{\intr}{\int_{\R^3}}
\newcommand{\intrr}{\iint_{\R^3 \times \R^3}}
\newcommand{\bn}{{\bf n}}
\nc{\mh}{\mathcal H}
\nc{\md}{\mathcal D}
\newcommand{\sfI}{\mathsf{I}}
\newcommand{\sfJ}{\mathsf{J}}
\newcommand{\sfK}{\mathsf{K}}
\newcommand{\sfL}{\mathsf{L}}
\newcommand{\sfM}{\mathsf{M}}
\begin{document}
%%%%%%%%%%%%%%%%
\allowdisplaybreaks

\date{\today}

%\subjclass[2010]{35Q30, 35Q70, 70B05, 35Q83}
\keywords{Vlasov equations, compressible Navier--Stokes equations with degenerate viscosities, well-posedness, finite-time singularity formation.}

\begin{abstract} We analyze the Vlasov equation coupled with the compressible Navier--Stokes equations with degenerate viscosities and vacuum. These two equations are coupled through the drag force which depends on the fluid density and the relative velocity between particle and fluid. We first establish the existence and uniqueness of local-in-time regular solutions with arbitrarily large initial data and a vacuum. We then present sufficient conditions on the initial data leading to the finite-time blowup of regular solutions. In particular, our study makes the result on the finite-time singularity formation for Vlasov/Navier--Stokes equations discussed by Choi [J. Math. Pures Appl., 108, (2017), 991--1021] completely rigorous. 
\end{abstract}

\maketitle \centerline{\date}

\tableofcontents

%%%%%%%%%%%%%%%%%%%%%%%%%%%%%%%%%%%%%%%%%%%%%%%%%%%%%%%%%%%%%%%%%%%%%%%%%%%%%%%%%5
%
%
%                        Section: Introduction
%
%
%%%%%%%%%%%%%%%%%%%%%%%%%%%%%%%%%%%%%%%%%%%%%%%%%%%%%%%%%%%%%%%%%%%%%%%%%%%%%%%%%
\section{Introduction}
In the current work, we are interested in the existence and uniqueness of local-in-time regular solutions and the finite-time singularity formation for the Vlasov/Navier--Stokes equations with degenerate viscosities and vacuum. Let $f=f(x,\xi,t)$ be the one-particle distribution function at $(x,\xi) \in \R^3 \times \R^3$ and at time $t$, and let $\rho = \rho(x,t)$ and $u = u(x,t)$ be the fluid density and velocity, respectively. Then, our main system, Vlasov/Navier--Stokes system, is given by
\begin{align}\label{main_sys}
\begin{aligned}
&\pa_t f + \xi \cdot \nabla_x f + \nabla_\xi \cdot (D(\rho,u-\xi)f) = 0, \quad (x,\xi,t) \in \R^3 \times \R^3 \times \R_+,\cr
&\pa_t \rho + \nabla_x \cdot (\rho u) = 0, \quad (x,t) \in \R^3 \times \R_+,\cr
&\pa_t (\rho u) + \nabla_x \cdot (\rho u \otimes u) + \nabla_x p(\rho) - \nabla_x \cdot \mbs (\nabla_x u) = - \int_{\R^3} D(\rho,u-\xi) f\,d\xi,
\end{aligned}
\end{align}
with the drag force operator $D:= D(\rho, u-\xi )$ and
\[
\mbs(\nabla_x u) := 2\mu(\rho) \T(u) + \lambda(\rho) (\nabla_x \cdot u) \mathbb{I}_3,
\]
together with the initial data
\[
\lt(f(x,\xi,0), \rho(x,0), u(x,0)\rt) =: (f_0(x,\xi), \rho_0(x), u_0(x)), \quad (x,\xi) \in \R^3 \times \R^3,
\]
and far-field behavior:
\[
f(x,\xi,t) \to 0, \quad (\rho, u) \to (\rho_\infty, 0), \quad \rho_\infty\ge0,
\]
sufficiently fast as $|x|$, $|\xi| \to \infty$. Here $\mu = \mu(\rho)$ and $\lambda = \lambda(\rho)$ are the two Lam\'e viscosity coefficients depending on the fluid density $\rho$ which satisfy
\[
\mu > 0 \quad \mbox{and} \quad 2\mu + 3\lambda \geq 0,
\]
and $\mathbb{I}_3$ denotes the $3 \times 3$ real identity matrix. Throughout this paper, we choose 
\[
\mu(\rho) = \alpha\rho^\delta \quad \mbox{and} \quad \lambda(\rho) = \beta\rho^\delta,
\] 
with $\delta>1$, $\alpha>0$ and $2\alpha+ 3\beta \geq 0$. Note that in \cite{BD07} a new entropy estimate, named BD entropy after Bresch and Desjardins, is proposed and the assumption $\lambda(\rho) = 2\rho \mu'(\rho) - 2\mu(\rho)$ on the relation between viscosity coefficients is used to have more regularity on $\rho$. In our case, this relation gives $\beta = 2\alpha (\delta - 1) > 0$, thus $2\alpha + 3\beta = 2\alpha + 6\alpha(\delta - 1)>0$. The drag force operator $D$, the pressure law $p$ and the strain tensor $\T$ are given by
\[
D(\rho,u-\xi) = \rho^m(u-\xi), \quad p(\rho) = \rho^\gamma \quad \mbox{with} \quad m, \gamma > 1 \quad \mbox{and} \quad \T(u) := \frac12 \lt( \nabla_x u + (\nabla_x u)^T\rt).
\]
Here we would like to address that the drag force depends on the fluid density which is more physically relevant, but more difficulty in analysis. 

These types of kinetic-fluid models are widely used in describing the dynamics of small particles, dealt with the statistical point of view (mesoscopic level), immersed in a fluid, which is usually considered from the hydrodynamic point of view (macroscopic level). We refer to \cite{Des10, Rou81, Wil58} and references therein for general discussions on the multiphase flows. Here, as stated above, we describe the particle dynamics by employing the particle distribution function $f$ which is governed by the Vlasov equation, and the compressible Navier--Stokes equations with degenerate viscosities are used for the fluid. These two equations are coupled through the drag force $D$, which is also often called the friction force. We are interested in the {\it thin sprays regime}, thus the other effects on inter-particle interactions, that would be considered in the {\it thick sprays regime}, for instance, collisions, break-up, coalescence phenomena, are neglected. 

Over the past two decades, mathematical analysis on the Vlasov/Navier--Stokes system has been extensively investigated, and some previous works can be briefly summarized as follows. The global existence of weak solutions is studied in different spatial domains; periodic domain \cite{BDGM09}, bounded domain with reflection boundary conditions \cite{Yu13}, time-dependent domain with absorption boundary condition \cite{BGM17}. Other inter-particle interactions are also considered; BGK collision operator \cite{CY20}, particle breakup operator \cite{YY18}, and global weak solutions are obtained. In the case with diffusion, i.e., Vlasov--Fokker--Planck/Navier--Stokes system, the global existence of weak solutions is established in \cite{CKL11, CJacc, MV07}. The existence theory of strong/classical solutions is also found in \cite{CDM11, CKL13, CK15, CLY21, LMW17}. The large-time behavior of solutions is also discussed in \cite{C16, CK15, Hpre, HMM20}. It shows the velocity-alignment behavior of solutions, in particular, the particle distribution function converge towards the monokinetic ansatz, i.e., roughly speaking, $f\,dxd\xi \simeq \rho_f dx \otimes \delta_{u}(v)\,dv$, where $\rho_f = \int f\,dv$ for sufficiently large $t$. The asymptotic analysis of Vlasov or Vlasov--Fokker--Planck equation coupled with Navier--Stokes equations is also studied in \cite{CCK16, CG06, CJpre,CJacc, CJ20, GJV04, GJV04_2, HMpre, MV08}, based on the relative entropy arguments, to derive two-phase fluid models. In \cite{C17}, the finite-time singularity formation for the Vlasov/Navier--Stokes system is first discussed. At the formal level, sufficient conditions on the initial data leading to a finite-time breakdown of smoothness of solutions are analyzed. Our main purpose is to make the work \cite{C17} completely rigorous. It requires the well-posedness for the system \eqref{main_sys} and investigation of the finite-time blowup of solutions. 

Before presenting our main results, we introduce several notations used throughout the paper. For a function $f(x,\xi)$, $\|f\|_{L^p}$ denotes the usual $L^p(\R^3 \times \R^3)$-norm, and if $g$ is a function of $x$, the usual $L^p(\R^3)$-norm is denoted by $\|g\|_{L^p}$, unless otherwise specified. For $p \in [1,\infty)$, we denote by $L^{2,p}_\nu := L^2_\nu(\R^3 \times \R^3)$ the space of measurable functions which are weighted by $\nu_p(x,\xi) :=(1 + |x|^2 + |\xi|^2)^{p/2}e^{|\xi|^2}$ and equipped with the norm
\[
\|f\|_{L^{2,p}_\nu} := \lt(\intrr \nu_p(x,\xi) f^2\,dxd\xi\rt)^{1/2}.
\]
For any $k \in \N$, $H^{k,p}_\nu := H^{k,p}_\nu(\R^3 \times \R^3)$ stands for $L^{2,p}_\nu$ Sobolev space of $k$-th order, i.e.
\[
\|f\|_{H^{k,p}_\nu} := \lt(\sum_{|\alpha| \leq k}\intrr \nu_p(x,\xi) |\nabla_{(x,\xi)}^\alpha f|^2\,dxd\xi\rt)^{1/2}.
\]
$f \ls g$ represents that there exists a positive constant $C>0$ such that $f \leq C g$. We also denote by $C$ a generic positive constant. For simplicity, we often drop $x$-dependence of differential operators $\pa_{x_i}, i=1,2,3$, $\nabla_x$, and $\Delta_x$, that is, $\pa_i f:= \pa_{x_i} f$, $\nabla f := \nabla_x f$, and $\Delta f := \Delta_x f$. For any nonnegative integer $s$, $H^s$ denotes the $s$-th order $L^2$ Sobolev space and $\dot{H}^s$ stands for the homogenous Sobolev space. $\mc^s([0,T];E)$ is the set of $s$-times continuously differentiable functions from an interval $[0,T]\subset \R$ into a Banach space $E$, and $L^p(0,T;E)$ is the set of the $L^p$ functions from an interval $(0,T)$ to a Banach space $E$. $\nabla^s$ denotes any partial derivative $\pa^\alpha$ with multi-index $\alpha, |\alpha| = s$. Moreover, $\mc_w([0,T];X)$ denotes the set of the continuous functions from $[0,T]$ to $X$ with respect to the weak topology on $X$, that is, if $f \in \mc_w([0,T];X)$, then $f(t) \rightharpoonup f(t_0)$ weakly in $X$ as $t \to t_0$ for any $t_0 \in [0,T]$.

Employing the notations introduced above, we define a notion of our regular solution to the system \eqref{main_sys}.
\begin{definition}\label{def_sol}
For $T>0$ and $p \geq 2$, we say a triplet $(f,\rho,u)$ is a regular solution to \eqref{main_sys} on the time interval $[0,T]$ if it satisfies the followings:
\begin{enumerate}
\item[(i)]
$f \in \mc([0,T];H^{2,p}_\nu(\R^3 \times \R^3))$, $\pa_t f \in \mc([0,T];H^{1,p-2}_\nu(\R^3 \times \R^3))$,
\item[(ii)]
$\rho\ge 0$, $\rho^{\frac{\delta-1}{2}} - {\rho_\infty}^{\frac{\delta-1}{2}} \in \mc([0,T];H^3(\R^3))$, $\pa_t\lt(\rho^{\frac{\delta-1}{2}}\rt) \in \mc([0,T]; H^2(\R^3))$,
\item[(iii)]
$u \in \mc([0,T];H^{s'}(\R^3))\cap L^\infty(0,T;H^3(\R^3))$, $\rho^{\frac{\delta-1}{2}} \nabla^4 u \in L^2(0,T;L^2(\R^3))$, $\pa_t u \in \mc([0,T];H^1(\R^3)) \cap L^2(0,T;\dot{H}^2(\R^3))$,
\item[(iv)]
$(f,\rho,u)$ satisfies system \eqref{main_sys} in the sense of distribution,
\item[(v)]
when $\rho(t,x) = 0$, the following relation holds:
\[
\pa_t f + \xi \cdot \nabla f =0 \quad \mbox{and} \quad \pa_t u + u \cdot \nabla u = 0. 
\]
\end{enumerate}
\end{definition}
We now state our first main theorem below:
\begin{theorem}\label{T1.1}
Suppose that the initial data $(f_0, \rho_0, u_0)$ satisfy the following conditions:
\begin{align}\label{init_cond}
\begin{aligned}
&\textup{(i)}~~f_0 \in H^{2,p}_\nu(\R^3 \times \R^3) \quad \mbox{with} \quad p \ge 2, \\
&\textup{(ii)}~~ \rho_0 \ge 0, \  \mbox{ and  } \ (\rho_0^{\frac{\delta-1}{2}} -{\rho_\infty}^{\frac{\delta-1}{2}}, u_0) \in H^3(\R^3) \times H^3(\R^3),\\
&\textup{(iii)}~~ 1<\delta \le \min\lt\{\frac{\gamma+1}{2}, 3, \frac{2m+1}{3} \rt\}.
\end{aligned}
\end{align}
Then we can find a constant $T^*>0$ such that system \eqref{main_sys} admits a regular solution $(f,\rho,u)$ on the time interval $[0,T^*]$ in the sense of Definition \ref{def_sol}.
\end{theorem}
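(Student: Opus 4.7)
\textbf{Proof proposal for Theorem \ref{T1.1}.} The plan is to construct the regular solution via a linearization-and-iteration scheme, derive uniform a priori bounds in the functional framework of Definition \ref{def_sol}, and then pass to the limit and check uniqueness. Because $\mu(\rho),\lambda(\rho)\sim \rho^\delta$ vanish on the vacuum set $\{\rho=0\}$, the fluid momentum equation cannot be analyzed as a standard quasilinear parabolic system; the structural trick, following Bresch--Desjardins and Li--Pan--Zhu type work, is to promote the quantity $\varphi:=\rho^{(\delta-1)/2}$ (for which $\partial_t\varphi+u\cdot\nabla\varphi+\tfrac{\delta-1}{2}\varphi\,\nabla\cdot u=0$) to the basic unknown. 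Using $\varphi$ rather than $\rho$, one can recast the diffusion $\nabla\cdot(\rho^\delta\mathbb T(u))$ and control it in $H^3$ without requiring $\rho>0$; the condition $\delta\le(\gamma+1)/2$ is precisely what allows the pressure term $\nabla\rho^\gamma$ to be absorbed by $\varphi$-based energies, while $\delta\le(2m+1)/3$ is needed to dominate the drag reaction $\int\rho^m(u-\xi)f\,d\xi$ by the viscous dissipation.

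First, I would freeze $(\bar\rho,\bar u,\bar f)$ and solve three decoupled linear problems. For the kinetic equation, given $(\bar\rho,\bar u)$ I propagate $f$ along the characteristics $\dot X=\Xi$, $\dot\Xi=\bar\rho^{\,m}(\bar u-\Xi)$; the weight $\nu_p(x,\xi)=(1+|x|^2+|\xi|^2)^{p/2}e^{|\xi|^2}$ is preserved (up to Gr\"onwall constants depending on $\|\bar u\|_{L^\infty}$) by these characteristics, and differentiating the characteristic map twice in $(x,\xi)$ yields $f\in\mc([0,T];H^{2,p}_\nu)$ with $\partial_t f\in\mc([0,T];H^{1,p-2}_\nu)$ by the Vlasov equation itself. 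Simultaneously, the continuity equation, read as a transport equation for $\varphi$ driven by $\bar u$, produces a nonnegative $\rho$ and a regular $\varphi-\rho_\infty^{(\delta-1)/2}\in\mc([0,T];H^3)$ via the flow map of $\bar u$. The momentum equation becomes a linear parabolic system
\[
\rho\,\partial_t u+\rho\,\bar u\cdot\nabla u-\nabla\cdot\mathbb S(\nabla u)=-\nabla p(\rho)-\int_{\R^3}\rho^m(\bar u-\xi)\bar f\,d\xi,
\]
which, once one multiplies by the appropriate cut-off (or, equivalently, solves the regularized problem with viscosity $(\rho+\e)^\delta$), is standard parabolic theory on each iterate.

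The main work is then the \emph{uniform-in-$n$ a priori estimate}. I would combine: (a) the basic physical energy and weighted $L^2$ estimate for $f$, giving $\|f\|_{L^{2,p}_\nu}$; (b) a BD-type entropy producing control of $\nabla\varphi$ in $L^2$; (c) successive applications of $\nabla^k$, $k\le 3$, to the momentum equation, multiplied by $\nabla^k u$ and $\nabla^k\partial_t u$, producing the top-order estimates
\[
\frac{d}{dt}\Bigl(\|\nabla^3 u\|_{L^2}^2+\|\varphi-\rho_\infty^{(\delta-1)/2}\|_{H^3}^2\Bigr)+\|\rho^{(\delta-1)/2}\nabla^4 u\|_{L^2}^2\le Q\bigl(\|\cdot\|\bigr),
\]
where $Q$ is a polynomial in the current norms; (d) a matching energy for $\partial_t u\in H^1\cap L^2_tH^2_x$ obtained by differentiating the momentum equation in time; and (e) the $H^{2,p}_\nu$ estimate for $f$, whose forcing $\nabla_\xi\cdot(D f)$ requires $u\in W^{2,\infty}$, which is covered by $H^3$ in dimension three. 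The drag term in the momentum equation is controlled by $\|\rho^m\|_{H^3}\|\int(|\bar u|+|\xi|)\bar f\,d\xi\|_{H^3}$, and the second factor is bounded by $\|\bar f\|_{H^{2,p}_\nu}$ thanks to the Gaussian factor in $\nu_p$ and $p\ge 2$. Closing these estimates on a time $T^*=T^*(\|f_0\|_{H^{2,p}_\nu},\|\varphi_0-\rho_\infty^{(\delta-1)/2}\|_{H^3},\|u_0\|_{H^3})$, the iterates lie in a bounded set of the solution space.

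Finally, I would show the sequence is Cauchy in a lower-order norm (typically $L^2$ for $u$, $L^2$ for $\varphi$, $L^{2,p-2}_\nu$ for $f$), using the difference equations: here the damping from viscosity appears only multiplied by $\rho^\delta$, so one must again pair the $u$-difference equation against itself with the $\rho$-weight and use the elementary inequality $|a^m-b^m|\lesssim(a^{m-1}+b^{m-1})|a-b|$ together with the uniform $H^3$ bound on $\rho$. Weak-$*$ limits, combined with Aubin--Lions compactness on the $\varphi$ and $u$ components, deliver a regular solution satisfying (i)--(v) of Definition \ref{def_sol}; uniqueness follows from the same Cauchy-type estimate applied to two solutions. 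I expect the principal obstacle to be step (c): closing the top-order estimate for $u$ in $H^3$ requires careful handling of the commutator $[\nabla^3,\rho^\delta]\nabla^2 u$ near the vacuum set, and it is exactly for this commutator that the BD relation $\lambda=2\rho\mu'-2\mu$ and the restriction $\delta\le 3$ in \eqref{init_cond} are invoked, so this is the step where the condition on $\delta$ must be quantitatively tracked.
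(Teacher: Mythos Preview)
Your overall architecture---reformulate via $\varphi=\rho^{(\delta-1)/2}$, linearize, regularize, iterate, close uniform bounds, then prove Cauchy estimates---matches the paper. But there is a concrete gap in the contraction step. You propose to show the iterates are Cauchy in $L^2$ for $u$, $L^2$ for $\varphi$, and $L^{2,p-2}_\nu$ for $f$. The paper explicitly points out that this is \emph{not} enough here, even though $L^2$ suffices for the pure fluid problem in \cite{LPZ19}: the difference estimate for $f^{k+1}-f^k$ in $L^{2,p-2}_\nu$ produces factors $\|(n^{k+1})^{m\theta}-(n^k)^{m\theta}\|_{L^6}$ and $\|u^k-u^{k-1}\|_{L^6}$, which by Sobolev embedding in $\R^3$ force control of $\|n^{k+1}-n^k\|_{H^1}$ and $\|u^k-u^{k-1}\|_{H^1}$. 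Feeding these back, the $H^1$ difference estimate for $u$ then requires the degenerate dissipation $\int\bigl((n^k)^2+(n^{k+1})^2\bigr)|\nabla^2(u^{k+1}-u^k)|^2\,dx$ to absorb pressure, drag, and viscous commutator terms, and the $H^1$ estimate for $n^{k+1}-n^k$ in turn needs $\|n^k\nabla^2(u^k-u^{k-1})\|_{L^2}$. So the correct contraction space is $L^{2,p-2}_\nu\times H^1\times H^1$, and closing it means tracking the weighted dissipation one order higher than you anticipate.

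Two secondary points. First, your linearized momentum equation still carries $\rho\,\partial_t u$, which is degenerate at vacuum and not handled by standard parabolic theory even after replacing the viscosity by $(\rho+\varepsilon)^\delta$. The paper instead divides by $\rho$ \emph{before} linearizing, obtaining $\partial_t u + v\cdot\nabla u + (n^2+\eta^2)Lu + \cdots = -n^{\theta(m-1)}\int(v-\xi)f\,d\xi$ with $n=\varphi$, and regularizes by adding $\eta^2$ to the coefficient $n^2$ of the Lam\'e operator rather than to $\rho$ inside a power; this makes the linear problem uniformly parabolic in $u$ regardless of vacuum, and $\eta\to 0$ is taken after uniform bounds. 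Second, the paper does not invoke a BD entropy or the BD relation $\lambda=2\rho\mu'-2\mu$; it works with general $\alpha,\beta$ satisfying $2\alpha+3\beta\ge 0$ and gets $\|\nabla n\|_{H^2}$ directly from the transport equation for $n$, so your attribution of the restriction $\delta\le 3$ to the BD structure is off.
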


\begin{remark} When $\delta=1$, the condition $\textup{(ii)}$ in Theorem \ref{T1.1} can be replaced by
\[
\textup{(ii)}^*~~ \rho_0 \ge 0, \quad \ (\rho_0^{\frac{\gamma-1}{2}} -{\rho_\infty}^{\frac{\gamma-1}{2}}, u_0) \in H^3(\R^3) \times H^3(\R^3), \quad \mbox{and}\quad \nabla \rho_0/\rho_0 \in \mc([0,T];H^2(\R^3)),
\]
and the condition $\textup{(iii)}$ can be removed. Then we can obtain a solution $(f,\rho,u)$ to \eqref{main_sys} on the time interval $[0,T^*]$ in the following sense, for some constant $T^*>0$:
\begin{enumerate}
\item[(i)]
$f \in \mc([0,T^*];H^{2,p}_\nu(\R^3 \times \R^3))$, $\pa_t f \in \mc([0,T^*];H^{1,p-2}_\nu(\R^3 \times \R^3))$,
\item[(ii)]
$\rho\ge 0$, $\rho^{\frac{\gamma-1}{2}} - {\rho_\infty}^{\frac{\gamma-1}{2}} \in \mc([0,T^*];H^3(\R^3))$, $\pa_t\lt(\rho^{\frac{\gamma-1}{2}}\rt) \in \mc([0,T^*]; H^2(\R^3))$, 
\item[(iii)]
$\nabla\rho/\rho \in \mc([0,T^*];H^2(\R^3))$, $\pa_t(\nabla\rho/\rho) \in \mc([0,T^*];H^1(\R^3))$,
\item[(iv)]
$u \in \mc([0,T^*];H^{s'}(\R^3))\cap L^\infty(0,T^*;H^3(\R^3))$, $\pa_t u \in \mc([0,T^*];H^1(\R^3)) \cap L^2(0,T^*;\dot{H}^2(\R^3))$,
\item[(v)]
$(f,\rho,u)$ satisfies system \eqref{main_sys} in the sense of distribution,
\item[(vi)]
when $\rho(t,x) = 0$, the following relation holds:
\[
\pa_t f + \xi \cdot \nabla f =0 \quad \mbox{and} \quad \pa_t u + u \cdot \nabla u = 0. %-\int_{\R^3} (u-\xi)f\,d\xi.
\]
\end{enumerate}
This is possible by the combination of arguments in \cite{LPZ17} and ours. Here we do not have to assume the condition $1<\gamma\le 3$ for the regularity
\[
\rho \in \mc([0,T^*];H^3(\R^3)) \quad \mbox{and} \quad \pa_t \rho \in \mc([0,T^*];H^2(\R^3)),
\] 
as in  \cite{LPZ17}, due to the regularity of $\phi:=\nabla\rho/\rho$. Indeed, for example, we have
\[\begin{aligned}
\|\pa_i\pa_j\pa_t\rho\|_{L^2} &= \| \pa_i\pa_j \lt( \nabla \rho \cdot u + \rho\nabla \cdot u\rt)\|_{L^2}\\
&= \lt\| \pa_i\pa_j \lt( n^{\frac{2}{\gamma-1}} \lt(\frac{2}{\gamma-1}\phi \cdot u + \nabla \cdot u\rt) \rt)\rt\|_{L^2}\\
&\le  \lt\|\pa_i \lt[n^{\frac{2}{\gamma-1}}\lt(\lt(\frac{2}{\gamma-1}\phi_j \lt(\frac{2}{\gamma-1}\phi \cdot u + \nabla \cdot u\rt)\rt) + \pa_j\lt(\frac{2}{\gamma-1} \phi\cdot u + (\nabla\cdot u)\rt)\rt)  \rt]\rt\|_{L^2}\\
&\le C\|n\|_{L^\infty}^{\frac{2}{\gamma-1}} \Big(\| \phi_i \phi_j (\phi\cdot u + \nabla\cdot u)  \|_{L^2} +\| \pa_i \phi_j (\phi\cdot u + \nabla\cdot u)  \|_{L^2} + \| \phi_j \pa_i(\phi\cdot u + \nabla\cdot u)  \|_{L^2} \\
&\hspace{2.5cm} +\| \phi_i \pa_j(\phi\cdot u + \nabla\cdot u)  \|_{L^2} + \| \pa_i \pa_j(\phi\cdot u + \nabla\cdot u)  \|_{L^2}\Big)\\
&\le C\|n\|_{L^\infty}^{\frac{2}{\gamma-1}}\Big(\|\phi\|_{L^\infty}^2(\|\phi\|_{L^\infty}+1)\|u\|_{H^1} + \|\phi\|_{L^\infty}\|u\|_{H^3}\|\nabla\phi\|_{L^2} \\
&\hspace{2.5cm}+ \|\phi\|_{L^\infty}(\|\phi\|_{H^2} + 1)\|u\|_{H^2} + (\|\phi\|_{H^2}+1)\|u\|_{H^3}\Big)\\
&\le  C\|n\|_{L^\infty}^{\frac{2}{\gamma-1}} (1+\|\phi\|_{H^2})^3\|u\|_{H^3} <\infty,
\end{aligned}\]
where $i,j=1,2,3$.
\end{remark}

\begin{remark}\label{rmk_spt}
Even if we replace the weight $\nu_p$ by,
\[
\nu_p(x,\xi) = (1+|x|^2 + |\xi|^2)^{p/2} e^{a|\xi|^2},
\]
for any positive constant $a>0$, we can still have the same result. For simplicity, in this current work, we took $a=1$.

By the way, if we assume that $f_0$ has a finite support in velocity, instead of a finite velocity moment, then our solution space for $f$, $H^{2,p}_\nu(\R^3 \times \R^3)$ can be replaced by $H^2(\R^3 \times \R^3)$. That is, we do not need to introduce the weighted Sobolev space. In Appendix \ref{app_spt}, we discuss more details on how we proceed our strategy for the proof of Theorem \ref{T1.1} in this supported initial data case. 
\end{remark}

\begin{remark} By using the same argument as in \cite[Corollary 1.1]{LPZ19}, if $\delta$ satisfies
\[
 1<\delta \le \min\lt\{\frac{\gamma+1}{2}, \frac53, \frac{2m+1}{3} \rt\},
\]
then 
\[
\rho - \rho_\infty \in \mc([0,T^*];H^3(\R^3)) \cap \mc^1([0,T^*];H^2(\R^3)).
\]
\end{remark}

Our main strategy for the existence theory, Theorem \ref{T1.1}, mainly relies on the classical regularization and linearization arguments. In order to handle difficulties arising from degenerate viscosities and possible vacuum states, we follow the arguments recently developed in \cite{LPZ19}. We refer to \cite{LPZ19} for the detailed discussion and review of studies on the related fluid equations. Our analysis mainly focuses on the construction of solutions to the kinetic equation in the desired solution space and the estimates of drag forcing effects in the momentum equation in \eqref{main_sys}. As stated above, we consider the case where the drag force also depends on the fluid density, thus an appropriate condition on the exponent $m$ is required to control the drag force by using the viscous term. We clarify this and the assumptions $\delta \leq \frac{2m+1}{3}$ in \eqref{init_cond} is added compared to the work \cite{LPZ19}. Regarding the kinetic equation, we would like to emphasize that the exponential weight in velocity of the form $e^{|\xi|^2}$ plays a crucial role in closing the estimates of $\|f\|_{H^{2,p}_\nu}$. Due to the presence of drag force, the $k$-th order velocity moment estimate on $f$ requires the bound on $(k+1)$-th order velocity moment on $f$. Thus the estimates of solutions with polynomial weights is not enough to overcome this difficulty. We observe that the exponential weight in velocity and the careful analysis of the drag forcing term can resolve this problem. By the way, due to some technicality, the polynomial weight in velocity is also taken into account together with the exponential weight in velocity. This can be easily verified by considering the free transport equation, $\pa_t f + \xi \cdot \nabla f = 0$. By the way, dealing with the polynomial weight in spatial variable $x$ is not necessary for the well-posedness theory. However, the estimates of finite-time singularity formation, which is our second main result stated below, require also some moment bounds in spatial variable $x$, thus we consider the weight in position. In this case, we can also easily check that the exponential weight in position is not necessary needed. On the other hand, as mentioned in Remark \ref{rmk_spt}, it is clear that this technical issue does not occur if we consider the compactly supported initial data $f_0$  in velocity. We first reformulate the system and introduce a linearized system. We would like to point out that appropriate existence results on the linearized system are not available, up to our best knowledge, so we further regularize the linearized system by considering an artificial viscosity $\eta > 0$. We then show the existence and uniqueness of regular solutions to that system and obtain the uniform-in-$\eta$ bound estimates of solutions. We finally pass to the limit $\eta \to 0$ and establish the existence of solutions to the linearized system by means of compactness arguments. 

Our second result is on the finite-time singularity formation for the system \eqref{main_sys}, within the framework of local existence theory established in Theorem \ref{T1.1}. As mentioned above, the finite-time blow-up phenomena are first observed in \cite{C17}. Under the assumptions on the existence of solutions satisfying some regularity and decay at infinity, the finite-time breakdown of regular solutions is discussed. Motivated from  \cite{LPZ19,Xin88,XY13}, where the finite-time blow-up of smooth solutions for the compressible Euler or Navier--Stokes system is obtained, and the {\it a priori} estimates in \cite{C17}, we introduce several physical quantities:\newline

\noindent $\bullet$ Mass.-
\[
m_\rho (t) := \intr \rho\,dx, \quad m_f(t) := \intrr f\,dxd\xi.
\]
$\bullet$ Momentum.-
\[
M(t):=  \intr \rho u\,dx + \intrr f \xi \,dxd\xi =: M_\rho(t) + M_f(t). 
\]
$\bullet$ Momentum weight.-
\[
W(t):= \intr \rho u \cdot x\,dx + \intrr (x\cdot \xi) f\,dxd\xi =: W_\rho(t) + W_f(t).
\]
$\bullet$ Momentum of inertia.-
\[
I(t) := \frac12 \intr \rho|x|^2\,dx + \frac12 \intrr f|x|^2\,dxd\xi =: I_\rho(t) + I_f(t).
\]
$\bullet$ Total energy.-
\[
E(t):= \frac12 \intr \rho|u|^2\,dx + \frac{1}{\gamma - 1}\intr \rho^\gamma\,dx + \frac12 \intrr f|\xi|^2\,dxd\xi=: E_k(t) + E_i(t) + E_f(t).
\]
Then we present our second result.
\begin{theorem}\label{main_thm2}Let $(f,\rho,u)$ be a solution to the Cauchy problem to \eqref{main_sys}  obtained in Theorem \ref{T1.1}. Suppose that the viscosity coefficient $\mu$ has a form of $\mu(\rho) = \rho^\delta$ with $\delta \in (1,\gamma)$ and the initial mass $m_\rho(0)$ is finite. Then the life-span $T$ of the solution $(f,\rho,u)$ is finite if 
\[
1 < \gamma < \frac53, \qquad \gamma - \frac13< \delta < \gamma, 
\]
and the initial data satisfy
\[
C_0 >\frac12\lt(\max\{2,3(\gamma - 1)\}E(0)\rt)\lt(J(0)^{\frac{\gamma-\delta}{\gamma-1}} +\frac{\lt(2\alpha + 9\beta \rt) \,(m_\rho(0))^{\frac{\gamma-\delta}{\gamma-1}}(\gamma-1)^{\frac{\delta-\gamma}{\gamma-1}}(\gamma-\delta)}{4(1 - 3(\gamma - \delta))}\rt)^{\frac{\gamma-1}{\gamma-\delta}}.
\]
Here $C_0$ and $J(0)$ are nonnegative constants given by
\[
C_0 := \lt(\frac{\pi^{3/2}}{\Gamma\lt(5/2 \rt)}\rt)^{1 - \gamma}\frac{(m_\rho(0))^{\frac{5\gamma - 3}{2}}}{2^{\frac{5\gamma - 3}{2}}(\gamma-1)},
\]
and $J(0) := I(0) - W(0) + E(0) \geq 0$, respectively. Here $\Gamma$ is the gamma function.
\end{theorem}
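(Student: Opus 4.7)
The plan is to argue by contradiction: suppose the regular solution produced by Theorem \ref{T1.1} can be extended to all of $[0,\infty)$, and deduce a contradiction from the quantitative hypothesis on $C_0$. The first step is to derive a family of virial-type identities. Testing the continuity and Vlasov equations against $|x|^2/2$, and the momentum equation against $x$, and using the weighted integrability of $f$ from Definition \ref{def_sol} to justify integration by parts, one obtains conservation of $m_\rho$, $m_f$, and of the total momentum $M$ (the drag contributions between the two species cancel by Newton's third law), the energy dissipation identity
\[
\frac{d}{dt}E(t) = -\int_{\R^3}\mbs(\nabla u):\nabla u\,dx - \iint_{\R^3\times\R^3}\rho^{m}|u-\xi|^{2}f\,d\xi dx \leq 0,
\]
and the two virial identities
\[
\dot I(t) = W(t), \qquad \dot W(t) = 2\bigl(E_k(t)+E_f(t)\bigr) + 3(\gamma-1)E_i(t) + \mathcal{V}(t),
\]
where $\mathcal{V}(t) := \frac{2\alpha+3\beta}{\delta-1}\frac{d}{dt}\int_{\R^3}\rho^{\delta}\,dx$. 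The viscous correction $\mathcal{V}$ is produced using the algebraic identity $\int \rho^{\delta}\nabla\cdot u\,dx = -\frac{1}{\delta-1}\frac{d}{dt}\int \rho^{\delta}\,dx$, itself a direct consequence of the continuity equation.

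Next, I prepare the three key a priori bounds. A Markov/Hölder argument localizing a controlled fraction of the mass of $\rho(\cdot,t)$ to the ball of radius $R \sim \sqrt{I_\rho(t)/m_\rho(0)}$ yields the pressure lower bound
\[
E_i(t) \geq \frac{C_0}{I(t)^{3(\gamma-1)/2}},
\]
with $C_0$ the explicit constant of the theorem; here the assumption $\gamma<5/3$ enters through the exponent condition $3(\gamma-1)/2<1$. A Hölder interpolation between $L^{1}$ and $L^{\gamma}$ yields
\[
\int_{\R^3}\rho^{\delta}(t,x)\,dx \leq \bigl((\gamma-1)E_i(t)\bigr)^{(\delta-1)/(\gamma-1)} m_\rho(0)^{(\gamma-\delta)/(\gamma-1)},
\]
which controls $|\mathcal{V}(t)|$ in terms of the (conserved) initial mass and the (monotone) energy. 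Finally, combining $|W_\rho|\leq 2\sqrt{E_k I_\rho}$ and $|W_f|\leq 2\sqrt{E_f I_f}$ with AM--GM gives $|W|\leq I+(E_k+E_f)$, which in turn produces the crucial positivity
\[
J(t) := I(t) - W(t) + E(t) \geq E_i(t) > 0
\]
for every $t$ in the life-span, where strict positivity uses $m_\rho(0)>0$ and $C_0>0$.

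To close the argument, I combine the differentiated virial identity $\ddot I = \dot W \leq \max\{2,3(\gamma-1)\}E(t) + \dot{\mathcal{V}}(t)$, valid for $\gamma<5/3$, with the pressure lower bound and the interpolation estimate above to absorb $\mathcal V$ into the dominant pressure contribution. The outcome is a Riccati-type differential inequality of the schematic form
\[
\frac{d}{dt}\bigl[J(t)^{(\gamma-\delta)/(\gamma-1)}\bigr] \leq -c\bigl(C_0,E(0),m_\rho(0),\alpha,\beta,\gamma,\delta\bigr),
\]
whose right-hand side is \emph{strictly negative} precisely under the quantitative hypothesis on $C_0$ stated in the theorem. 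Integrating in time then forces $J(t)^{(\gamma-\delta)/(\gamma-1)}$ to cross zero in finite time, contradicting $J\geq E_i>0$; hence the life-span is finite. The hardest step will be this last one: matching the Hölder exponents on both sides and absorbing the viscous error so that a closed ODI emerges with exactly the exponent $(\gamma-\delta)/(\gamma-1)$. The upper restriction $\delta<\gamma$ keeps the viscous stress weaker than the pressure, while the sharp lower restriction $\delta>\gamma-\tfrac{1}{3}$ is exactly the threshold making $1-3(\gamma-\delta)>0$, the factor appearing in the denominator of the viscous correction constant in the theorem, which is what allows the viscous error to be absorbed into the pressure blowup term.
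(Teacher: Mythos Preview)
Your virial identities, the pressure lower bound $E_i\geq C_0 I^{-3(\gamma-1)/2}$, the H\"older interpolation $\int\rho^\delta\leq ((\gamma-1)E_i)^{(\delta-1)/(\gamma-1)}m_\rho(0)^{(\gamma-\delta)/(\gamma-1)}$, and the nonnegativity argument are all correct and all used in the paper. The closure step, however, does not go through as you describe.

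First, your functional $J(t)=I(t)-W(t)+E(t)$ is not the one the paper uses: the paper takes the \emph{time-weighted} quantity $J(t)=I(t)-(t+1)W(t)+(t+1)^2E(t)$, and this choice is essential. With the $(t+1)$-weights, differentiating and using $\dot I=W$ cancels the $W$-terms, and one is left with
\[
\dot J = (t+1)\bigl(2-3(\gamma-1)\bigr)E_i + (t+1)\!\int(2\mu+3\lambda)\nabla\!\cdot u\,dx + (t+1)^2\dot E.
\]
The point of the weights is that Young's inequality on $(t+1)\!\int 2\mu\nabla\!\cdot u$ and $(t+1)\!\int 3\lambda\nabla\!\cdot u$ produces the dissipation term $(t+1)^2\int\bigl(2\mu|\T(u)|^2+\lambda|\nabla\!\cdot u|^2\bigr)$, which is exactly $-(t+1)^2\dot E$ (up to the nonnegative drag dissipation), so these two terms cancel. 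Without the weights this cancellation is unavailable, and your unweighted $J$ does not satisfy a closed differential inequality.

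Second, even with the correct $J$, there is no Riccati inequality with strictly negative right-hand side. After the cancellation above and the interpolation bound on $\int\rho^\delta$, one gets
\[
\dot J \leq \frac{2-3(\gamma-1)}{t+1}J + \frac{b}{(t+1)^{2(\delta-1)/(\gamma-1)}}J^{(\delta-1)/(\gamma-1)},
\]
whose right-hand side is \emph{positive}. The paper feeds this into a Bernoulli/Gr\"onwall lemma (this is where $1-3(\gamma-\delta)>0$ enters) to obtain the growth bound $J(t)\leq C_3(t+1)^{2-3(\gamma-1)}$, hence $E_i(t)\leq C_3(t+1)^{-3(\gamma-1)}$ via $J\geq (t+1)^2E_i$. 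The contradiction is then obtained by comparing this \emph{upper} bound with the \emph{lower} bound $E_i\geq C_0/I^{3(\gamma-1)/2}$ together with the quadratic upper bound $I(t)\leq I(0)+C_1t+C_2t^2$: both sides decay like $t^{-3(\gamma-1)}$ and the hypothesis $C_0>C_2C_3$ makes the constants incompatible as $t\to\infty$. So the mechanism is a clash of asymptotic rates, not a finite-time extinction of $J$; your proposed step ``$\tfrac{d}{dt}J^{(\gamma-\delta)/(\gamma-1)}\leq -c$ forces $J\to 0$'' does not hold and should be replaced by this growth-rate comparison.
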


\begin{remark}If $2\alpha + 9\beta = 0$, then the initial condition in the above theorem reduces to
\[
C_0 >\frac12\lt(\max\{2,3(\gamma - 1)\}E(0)\rt)J(0).
\]
\end{remark}

The main idea of proving Theorem \ref{main_thm2} is based on the analysis of time evolution of physical quantities defined above. In brief, we estimate the lower and upper bounds on the internal energy $E_i$ in time, and investigate the initial data that leading to a contradiction as time goes to infinity. This yields that the life span of the regular solution satisfying that initial condition should be finite.

The rest of this paper is organized as follows. In Section \ref{sec:lin_ext}, we introduce a reformulated and linearized system associated to the system \eqref{main_sys} and show the local-in-time existence and uniqueness of regular solutions in the sense of Definition \ref{def_sol}. Based on the estimates for the linearized system, in Section \ref{sec:nlin_ext}, we construct approximate solutions to the nonlinear reformulated system and provide Cauchy estimates to prove the well-posedness. Finally, Section \ref{sec:blow} is devoted to have the finite-time singularity formation for the system \eqref{main_sys} under suitable assumptions on the initial physical quantities.

%%%%%%%%%%%%%%%%%%%%%%%%%%%%%%%%%%%%%%%%%%%%%%%%%%%%%%%%%%%%%%%%%%%%%%%%%%%%%%%%%%%%%%%%%%%%%%%%%%%%%%%%%%%%%%%%%%%%%%%%%%%%%%%%%%%%%%%%%%%%%%
%
%
%  \section{Existence and uniqueness of classical solutions}
%
%
%%%%%%%%%%%%%%%%%%%%%%%%%%%%%%%%%%%%%%%%%%%%%%%%%%%%%%%%%%%%%%%%%%%%%%%%%%%%%%%%%%%%%%%%%%%%%%%%%%%%%%%%%%%%%%%%%%%%%%%%%%%%%%%%%%%%%%%%%%%%%%%%
\section{Well-posedness of the linearized system}\label{sec:lin_ext}
%\setcounter{equation}{0}

%%%%%%%%%%%%%%%%%%%%%%%%%%%%%%%%%%%%%%%%%%%%%%%%%%%%%%%%%%%%%%%%%%%%%%%%%%%%%%%%%%%%%
%
%  \subsection{Reformulation}
%
%%%%%%%%%%%%%%%%%%%%%%%%%%%%%%%%%%%%%%%%%%%%%%%%%%%%%%%%%%%%%%%%%%%%%%%%%%%%%%%%%%%%%%
\subsection{Reformulation, regularization, \& linearization}
Introducing a new variable $n := \rho^{\frac{\delta-1}{2}}$, inspired by the local sound speed, we rewrite the system \eqref{main_sys} as
\begin{align}\label{main_sys2}
\begin{aligned}
&\pa_t f + \xi \cdot \nabla f + \nabla_\xi \cdot \lt( (n)^{\frac{2m}{\delta-1}}(u - \xi)f \rt) = 0, \quad (x,\xi,t) \in \R^3 \times \R^3 \times \R_+,\cr
&\pa_t n + u\cdot \nabla n + \frac{\delta-1}{2}n \nabla \cdot u = 0, \quad (x,t) \in \R^3 \times \R_+,\cr
&\pa_t u + u \cdot \nabla u + \frac{\gamma}{\gamma-1}\nabla \lt(n^{\frac{2(\gamma-1)}{\delta-1}}\rt) +  n^2 Lu + \frac{\delta}{\delta-1}\nabla \lt(n^2\rt)\cdot \ml u = -n^{\frac{2(m-1)}{\delta-1}}\int_{\R^3} (u-\xi)f\,d\xi,
\end{aligned}
\end{align}
subject to initial data
\[
(f(x,\xi,0),n(x,0),u(x,0)) := (f_0(x,\xi), n_0(x), u_0(x)),
\]
and far-field behavior:
\[
f(x,\xi,t) \to 0, \quad (n, u) \to (n_\infty, 0), \quad n_\infty := \rho_\infty^{\frac{\delta-1}{2}},
\]
sufficiently fast as $|x|$, $|\xi| \to \infty$. Here, the Lam\'e operator $L$ is given by
\[
Lu = -\alpha \Delta u - (\alpha+\beta) \nabla (\nabla \cdot u),
\]
and the operator $\ml$ is given as 
\[
\ml u = -\lt(\alpha (\nabla u + (\nabla u)^T)  + \beta(\nabla \cdot u)\mathbb{I}_3\rt).
\] 

As we might expect, the well-posedness theory for the reformulated system \eqref{main_sys2} is closely related to our original system \eqref{main_sys}. Indeed, Definition \ref{def_sol} (ii) and (iii), together with the conditions on the parameters, yield the both characteristics associated to the kinetic and continuity equation in \eqref{main_sys} are well defined. This observation implies that Theorem \ref{T1.1} is equivalent to the following theorem. 

\begin{theorem}\label{thm_ref} Suppose that the initial data $(f_0, n_0, u_0)$ satisfy the following conditions:
\begin{align}\label{init_cond}
\begin{aligned}
&(i)~~f_0 \in H^{2,p}_\nu(\R^3 \times \R^3) \quad \mbox{with} \quad p \geq 2, \\
&(ii)~~ n_0 \ge 0, \  \mbox{ and  } \ (n_0 -n_\infty, u_0) \in H^3(\R^3) \times H^3(\R^3),\\
&(iii)~~\displaystyle 1<\delta \le \min\lt\{\frac{\gamma+1}{2}, 3, \frac{2m+1}{3} \rt\}.
\end{aligned}
\end{align}
Then we can find a constant $T^*>0$ such that system \eqref{main_sys2} admits a regular solution $(f,n,u)$ on the time interval $[0,T^*]$ in the sense of distributions satisfying
\begin{enumerate}
\item[(i)]
$f \in \mc([0,T^*];H^{2,p}_\nu(\R^3 \times \R^3))$, $\pa_t f \in \mc([0,T^*];H^{1,p-2}_\nu(\R^3 \times \R^3))$,
\item[(ii)]
$n\ge 0$, $n - n_\infty \in \mc([0,T^*];H^3(\R^3))$, $\pa_t n \in \mc([0,T^*]; H^2(\R^3))$,
\item[(iii)]
$u \in \mc([0,T^*];H^{s'}(\R^3))\cap L^\infty(0,T^*;H^3(\R^3))$, $n\nabla^4 u \in L^2(0,T^*;L^2(\R^3))$, $\pa_t u \in \mc([0,T^*];H^1(\R^3)) \cap L^2(0,T^*;\dot{H}^2(\R^3))$ for any $s' \in [2,3)$.
\end{enumerate}
\end{theorem}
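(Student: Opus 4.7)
The plan is to follow the classical linearization and regularization scheme outlined by the authors. First fix a reference triple $(\bar f, \bar n, \bar u)$ in the target regularity class and decouple \eqref{main_sys2}: first solve the linear Vlasov equation $\pa_t f + \xi\cdot\nabla f + \nabla_\xi\cdot(\bar n^{2m/(\delta-1)}(\bar u-\xi)f) = 0$ for $f$; then solve the linear transport equation $\pa_t n + \bar u\cdot\nabla n + \frac{\delta-1}{2}n\nabla\cdot\bar u = 0$ for $n$ along the flow of $\bar u$; finally solve a linear momentum equation for $u$ whose coefficients depend on $\bar n$ and whose right-hand side is the integral moment of the already constructed $f$ against $\bar n^{2(m-1)/(\delta-1)}(\bar u-\xi)$. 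Since $n^2 Lu$ degenerates at vacuum the momentum step is not directly parabolic, so I would add an artificial Laplacian $-\eta\Delta u$, solve the resulting uniformly parabolic linear problem, derive $\eta$-uniform estimates, and pass to $\eta\to 0$ by compactness to produce a solution of the linearized system. Local existence for \eqref{main_sys2} then follows from a Banach fixed-point argument on a suitable ball with contraction in a weaker norm.

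For the Vlasov step I would use the method of characteristics for existence and then weighted energy estimates to control $\|f\|_{H^{2,p}_\nu}$. The role of the weight is the subtle point: after integration by parts the drag term produces a contribution of the form $\intrr \bar n^{2m/(\delta-1)}(\bar u-\xi)\cdot\nabla_\xi\nu_p\, f^2\,dxd\xi$; with a purely polynomial velocity weight the $\xi$-linearity of the drag forces $\|f\|_{L^{2,p+1}_\nu}$ into the right-hand side and prevents closure at each derivative order. The exponential factor $e^{|\xi|^2}$ in $\nu_p$ produces a coercive $|\xi|^2 f^2$ term that absorbs the bad contribution, which is exactly why an exponential velocity weight is introduced. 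The polynomial $x$-weight is included only to provide the position moments of $f$ needed later for Theorem \ref{main_thm2}.

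For the fluid step I would adapt the scheme of \cite{LPZ19}. The transport equation for $n$ preserves nonnegativity and yields $n-n_\infty\in\mc([0,T];H^3)$ once $\bar u\in L^\infty(0,T;H^3)$. For the $\eta$-regularized momentum equation one derives energy estimates up to third order, using the BD-type relation $\lambda=2\rho\mu'(\rho)-2\mu(\rho)$ to control $\nabla n$ through coercive $n\nabla^2 u$-type quantities and to convert $\frac{\delta}{\delta-1}\nabla(n^2)\cdot\ml u$ into manageable pieces. The genuinely new difficulty beyond \cite{LPZ19} is the density-weighted kinetic source $-n^{2(m-1)/(\delta-1)}\intr(u-\xi)f\,d\xi$: when estimating $\nabla^k u$ in $L^2$ one must bound $\nabla^k$ of this quantity without losing in the vacuum regime. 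The assumption $\delta\le(2m+1)/3$ is precisely what guarantees $2(m-1)/(\delta-1)\ge 1$, so at least one power of $n$ is available to be paired with the singular factor $\nabla n/n$ arising from derivatives of $n^{2(m-1)/(\delta-1)}$, allowing the drag contribution to be absorbed into the degenerate dissipation. Velocity moments of $f$ needed to close the estimate are controlled by the exponentially-weighted norm from the previous step.

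The hard part will be closing the $\eta$-uniform $H^3$ estimate for $u$ simultaneously against the degenerate coefficient $n^2$ and the density-weighted kinetic source, while keeping the Vlasov moments controlled by the constructed $(\bar n,\bar u)$. Once this is achieved, weak compactness as $\eta\to 0$ gives a regular solution of the linearized problem with bounds depending continuously on the reference triple, and a standard iteration and contraction argument on a ball in $\mc([0,T^*];H^{2,p}_\nu)\times\mc([0,T^*];H^3)\times\mc([0,T^*];H^3)$, contracting in a weaker $L^2$-based norm, yields local-in-time existence and uniqueness for \eqref{main_sys2}. The regularity items stated in Theorem \ref{thm_ref} and the distributional relations of Definition \ref{def_sol} then follow directly from the equations once the claimed Sobolev regularity has been established.
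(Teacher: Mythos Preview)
Your overall scheme---linearize, add artificial viscosity, prove $\eta$-uniform bounds, pass to the limit, then iterate---matches the paper's strategy, and your identification of the role of the exponential velocity weight in closing the Vlasov estimates is exactly right. Two points, however, deserve correction.

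First, the paper does \emph{not} invoke the BD-type relation $\lambda=2\rho\mu'(\rho)-2\mu(\rho)$ anywhere in the existence proof; that relation is mentioned only as background in the introduction. The $H^3$ control of $n$ comes purely from the transport structure of the $n$-equation, and the term $\frac{\delta}{\delta-1}\nabla(n^2)\cdot\ml v$ in the momentum equation is handled by direct energy estimates using $\|\nabla n\|_{L^\infty}$ and the bounds on $v$, not by any BD entropy mechanism. Also, your reading of the condition $\delta\le(2m+1)/3$ is slightly off: it actually gives $\theta(m-1)=\frac{2(m-1)}{\delta-1}\ge 3$ (not just $\ge 1$), and the way this is used is to peel off a single factor of $n$ from $n^{\theta(m-1)}$ and pair it with $\nabla^{k+1}u$ so that the drag source is controlled by the degenerate dissipation $\|n\nabla^{k+1}u\|_{L^2}$; the remaining exponent $\theta(m-1)-1\ge 2$ is what keeps derivatives of $n^{\theta(m-1)}$ nonsingular at vacuum.

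Second, and more seriously, your plan to contract ``in a weaker $L^2$-based norm'' will not close as stated. The paper points out explicitly that, unlike in \cite{LPZ19}, an $L^2\times L^2$ Cauchy estimate for $(n,u)$ is insufficient here because of the drag coupling: the difference equation for $f^{k+1}-f^k$ forces $\|\nabla(n^{k+1}-n^k)\|_{L^2}$ and $\|\nabla(u^k-u^{k-1})\|_{L^2}$ into the right-hand side (through the $L^6$-$L^3$-$L^2$ splitting needed to handle $(n^{k+1})^{m\theta}-(n^k)^{m\theta}$ and $u^k-u^{k-1}$ against the $f$-moments), and the $H^1$ Cauchy estimate for $u$ in turn requires controlling $\|n^k\nabla^2(u^k-u^{k-1})\|_{L^2}$. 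The correct contraction space is therefore $\mc([0,T];L^{2,p-2}_\nu)\times\mc([0,T];H^1)\times\mc([0,T];H^1)$, together with the auxiliary dissipation $\int_0^t\|n^k\nabla^2(u^{k+1}-u^k)\|_{L^2}^2\,ds$. You should revise your iteration step accordingly.
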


Now our main interest is to prove the above theorem. For this, motivated from \cite{LPZ19}, we consider the following linearized and regularized system:
\begin{align}\label{lin_sys}
\begin{aligned}
&\pa_t f + \xi \cdot \nabla f + \nabla_\xi \cdot \lt( n^{m\theta} (v - \xi)f \rt) = 0, \quad (x,\xi,t) \in \R^3 \times \R^3 \times \R_+,\cr
&\pa_t n + v\cdot \nabla n + \theta^{-1}\psi \nabla \cdot v = 0, \quad (x,t) \in \R^3 \times \R_+,\cr
&\pa_t u + v \cdot \nabla u + \frac{\gamma}{\gamma-1}\nabla \lt(n^{\theta(\gamma-1)}\rt) +  (n^2 + \eta^2) Lu + \frac{\delta}{\delta-1}\nabla \lt(n^2\rt)\cdot \ml v = -n^{\theta(m-1)}\int_{\R^3} (v-\xi)f\,d\xi,
\end{aligned}
\end{align}
with the initial data
\bq\label{ini_lin_sys}
\lt(f(x,\xi,0), n(x,0), u(x,0)\rt) =: (f_0(x,\xi), n_0(x), u_0(x)), \quad (x,\xi) \in \R^3 \times \R^3,
\eq
where we set $\theta := 2/(\delta-1) > 0$ for notational simplicity. Here $\psi$ and $v$ are given functions satisfying 
\begin{align}\label{as_psi_v}
\begin{aligned}
&\psi-n_\infty \in \mc([0,T];H^3(\R^3)), \quad \pa_t \psi \in \mc([0,T];H^2(\R^3)),\\
&v \in \mc([0,T];H^{s'}(\R^3)) \cap L^\infty(0,T;H^3(\R^3)), \quad \pa_t v \in \mc([0,T];H^1(\R^3)) \cap L^2(0,T;\dot{H}^2(\R^3)), \quad \mbox{and}\\
&\psi \nabla^4 v \in L^2([0,T]\times\R^3),
\end{aligned}
\end{align}
for any constants $s'\in[2,3)$ and fixed time $T>0$ and
\[
\ml v = -\lt(\alpha (\nabla v + (\nabla v)^T)  + \beta(\nabla \cdot v)\mathbb{I}_3\rt).
\] 

In this section, our main objective is to establish the existence and uniqueness theory for the linearized system \eqref{lin_sys}, but without the artificial viscosity, i.e., \eqref{lin_sys} with $\eta= 0$. More precisely, we state the theorem below.
\begin{theorem}\label{thm_lin} Suppose that the initial data \eqref{ini_lin_sys} satisfy \eqref{init_cond} and $p \geq 2$.  Then there exists a positive constant $T^*>0$ such that system \eqref{lin_sys} with $\eta= 0$ admits a unique regular solution $(f,n,u)$ corresponding to the initial data $(f_0, n_0, u_0)$ on the time interval $[0,T^*]$, which satisfies
\[
\begin{aligned}
& f \geq 0, \quad f \in \mc([0,T^*]; H^{2,p}_\nu(\R^3 \times \R^3)) \quad \pa_t f \in \mc([0,T^*]; H^{1,p-2}_\nu(\R^3 \times \R^3)), \cr
&  n - n_\infty \in \mc([0,T^*];H^3(\R^3)), \quad \pa_t n \in \mc([0,T^*];H^2(\R^3)), \cr
& u \in \mc([0,T^*];H^{s'}(\R^3)) \cap L^\infty(0,T^*;H^3(\R^3)), \quad  n\nabla^4 u \in L^2(0,T^*;L^2(\R^3)),\\
&\mbox{and} \quad \pa_t u \in \mc([0,T^*];H^1(\R^3)) \cap L^2(0,T^*;\dot{H}^2(\R^3)),
\end{aligned}
\]
for $s' \in [2,3)$.
\end{theorem}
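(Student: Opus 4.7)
The plan is to construct solutions via the regularized system \eqref{lin_sys} with $\eta>0$, derive bounds uniform in $\eta$, and pass to the limit $\eta\to 0$. A key simplification is that the linearization in the given data $(\psi,v)$ decouples the three unknowns into a sequential problem: first the transport equation for $n$, which involves only $(\psi,v)$; then the Vlasov equation for $f$, which uses $(n,v)$; and finally the momentum equation for $u$, which uses $(n,v,f)$ and the regularizing term $\eta^2 Lu$. Once uniform-in-$\eta$ estimates are in place, compactness provides a limit solution, and the linearity of each equation yields uniqueness via a standard $L^2$ energy estimate on differences.

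\textbf{The equations for $n$ and $f$.} The equation for $n$ is a linear inhomogeneous transport equation driven by $v$ with source $-\theta^{-1}\psi\nabla\cdot v$. Because $v\in L^\infty_t H^3\hookrightarrow L^1_t W^{1,\infty}$, the flow of $v$ is well-defined and regular in $x$; standard transport estimates yield $n-n_\infty\in\mc([0,T];H^3)$ with $\pa_t n\in\mc([0,T];H^2)$. For the Vlasov equation, with $n$ now available, the drift $n^{m\theta}(v-\xi)$ is a smooth field with at-most-linear growth in $\xi$, so the method of characteristics gives existence and, since $f_0\geq 0$, nonnegativity of $f$. The delicate point is closing the $H^{2,p}_\nu$-estimate: as emphasized in the introduction, integration by parts in the drag term $\nabla_\xi\cdot((v-\xi)f)$ against $\nu_p f$ normally produces a contribution with one extra power of $|\xi|$, so polynomial weights alone cannot close the estimate. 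The exponential factor $e^{|\xi|^2}$ in $\nu_p$ generates a coercive $|\xi|^2 n^{m\theta} f^2$ term that absorbs the bad piece, and the analogous mechanism, after commuting $\nabla_{x,\xi}^\alpha$ through the equation for $|\alpha|\le 2$, closes the full $H^{2,p}_\nu$ estimate.

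\textbf{The momentum equation, uniform bounds, and main obstacle.} Once $n$ and $f$ are known, the $u$-equation reads $\pa_t u+v\cdot\nabla u+(n^2+\eta^2)Lu=F$, where $F$ gathers the pressure gradient $\frac{\gamma}{\gamma-1}\nabla(n^{\theta(\gamma-1)})$, the cross term $\frac{\delta}{\delta-1}\nabla(n^2)\cdot\ml v$, and the drag contribution $-n^{\theta(m-1)}\int(v-\xi)f\,d\xi$. For fixed $\eta>0$ the principal part is uniformly parabolic, so a Galerkin construction produces a unique solution in $\mc([0,T];H^3)\cap L^2(0,T;H^4)$. The main obstacle is obtaining bounds on $\|u\|_{H^3}$ and $\|n\nabla^4 u\|_{L^2_t L^2_x}$ that are uniform in $\eta$, since as $\eta\to 0$ the operator $(n^2+\eta^2)L$ degenerates wherever $n$ vanishes. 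Following the scheme of \cite{LPZ19}, I would apply $\nabla^k$ for $k\le 3$ to the equation, exploit the degenerate dissipation $\int(n^2+\eta^2)|\nabla^{k+1}u|^2\,dx$, and use the $H^3$-control of $n-n_\infty$ together with commutator estimates between $\nabla^k$ and multiplication by $n^2$ to absorb the bad terms. The drag contribution requires taking up to three derivatives of $n^{\theta(m-1)}$ while keeping the resulting expression controlled where $n$ vanishes; this forces $\theta(m-1)\geq 3$, i.e.\ $\delta\le(2m+1)/3$, which is exactly the extra hypothesis in \eqref{init_cond} compared to \cite{LPZ19}. With these uniform estimates, Aubin--Lions compactness combined with weak-$\ast$ convergence of the top-order derivatives yields a subsequential limit as $\eta\to 0$ in the claimed regularity class, and one then verifies that this limit solves the $\eta=0$ system.
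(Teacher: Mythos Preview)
Your proposal is correct and follows essentially the same approach as the paper: regularize with $\eta>0$, solve the decoupled transport equation for $n$, then the Vlasov equation for $f$ (closing the weighted estimates via the $e^{|\xi|^2}$ coercivity), then the uniformly parabolic momentum equation for $u$; derive uniform-in-$\eta$ bounds exploiting the degenerate dissipation and the condition $\theta(m-1)\ge 3$ for the drag term, and pass to the limit by compactness, with uniqueness following from an energy estimate on differences. The only detail you do not mention, and which the paper treats explicitly, is recovering the strong time continuity $f\in\mc([0,T^*];H^{2,p}_\nu)$ after the limit: the paper argues this by combining the differential inequality for $\|f\|_{H^{2,p}_\nu}^2$ (giving right continuity at $t=0$) with the time-reversibility of the kinetic equation.
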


For the proof of Theorem \ref{thm_lin}, we first deal with the well-posedness of the linearized and regularized system \eqref{lin_sys}. Under the assumptions on the the given fluid velocity $v$, we notice that the characteristic associated to the continuity equation in \eqref{lin_sys} is well-defined. Thus, by a standard argument, we can readily deduce the $\mc([0,T];H^3(\R^3))$-regularity of $n-n_\infty$. This together with almost the same argument shows the characteristic associated to the kinetic equation \eqref{lin_sys} is well-defined, thus $\mc([0,T]; H^{2,p}_\nu(\R^3 \times \R^3))$-regularity of solution $f$ can be also obtained. We then consider the momentum equation in \eqref{lin_sys} with these regular solutions $n$ and $f$. This argument is now well established, thus we omit the details of that and only state the existence and uniqueness of regular solutions to the system \eqref{lin_sys}. We refer to \cite{CK15, LPZ19} for details. 

\begin{proposition}\label{prop_lsol}Suppose that the initial data \eqref{ini_lin_sys} satisfy \eqref{init_cond}. Then for any $T>0$ and $p \geq 2$, there exists a unique regular solution $(f,n,u)$ to the system \eqref{lin_sys} such that
\begin{align*}
\begin{aligned}
& f \geq 0, \quad f \in \mc([0,T]; H^{2,p}_\nu(\R^3 \times \R^3)) \quad \pa_t f \in \mc([0,T]; H^{1,p-2}_\nu(\R^3 \times \R^3)), \cr
&  n - n_\infty \in \mc([0,T];H^3(\R^3)), \quad \pa_t n \in \mc([0,T];H^2(\R^3)), \cr
& u \in \mc([0,T];H^{s'}(\R^3)) \cap L^\infty(0,T;H^3(\R^3)), \quad n\nabla^4 u \in L^2(0,T^*;L^2(\R^3)),\\
& \mbox{and} \quad \pa_t u \in \mc([0,T];H^1(\R^3)) \cap L^2(0,T;\dot{H}^2(\R^3)),
\end{aligned}
\end{align*}
for all constants $s'\in[2,3)$. 
\end{proposition}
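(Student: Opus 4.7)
The plan is to decouple the linearized system \eqref{lin_sys} and solve its three equations in sequence, exploiting the fact that each equation becomes linear (and the $u$-equation uniformly parabolic, thanks to $\eta>0$) once the previously solved quantities and the given auxiliary fields $(v,\psi)$ are inserted.

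First, I would solve the transport equation for $n-n_\infty$. Because $v$ has the regularity in \eqref{as_psi_v}, in particular $\nabla v \in L^\infty(0,T; L^\infty)$, the flow map generated by $v$ is well defined and $H^3$-smooth in $x$. Writing
\[
\pa_t (n - n_\infty) + v \cdot \nabla (n - n_\infty) = -\theta^{-1}\psi\, \nabla \cdot v,
\]
standard $H^3$ energy estimates against this transport structure yield $n-n_\infty \in \mc([0,T]; H^3(\R^3))$; reading off $\pa_t n$ from the equation and using the regularity of $v$ and $\psi$ gives $\pa_t n \in \mc([0,T]; H^2(\R^3))$. A maximum principle applied along characteristics preserves $n \ge 0$.

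Next, I would solve the Vlasov equation for $f$ by characteristics on $\R^3\times\R^3$. The drift $(\xi, n^{m\theta}(v-\xi))$ is Lipschitz in $(x,\xi)$ on each time slice, from the preceding step and the assumed regularity of $v$, so the associated flow is globally defined and $f$ propagates along it, keeping $f\ge 0$. The key point is the weighted estimate in $H^{2,p}_\nu$: applying $\nabla^\alpha_{(x,\xi)}$ with $|\alpha|\le 2$, testing against $\nu_p \nabla^\alpha f$, and integrating by parts in $\xi$ produces terms where the weight $\nu_p=(1+|x|^2+|\xi|^2)^{p/2}e^{|\xi|^2}$ is differentiated. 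Its exponential factor returns a factor of $\xi$ on differentiation, and this cancels exactly against the $\xi$ that would otherwise arise from $\nabla_\xi\cdot((v-\xi)f)$, keeping the estimate self-contained at order $p$ without needing control of the $(p+1)$-th moment. A Gronwall argument then closes the bound and delivers $f \in \mc([0,T]; H^{2,p}_\nu)$; rewriting $\pa_t f$ from the equation provides $\pa_t f \in \mc([0,T]; H^{1,p-2}_\nu)$.

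Finally, I would solve the momentum equation for $u$. With $n$ and $f$ fixed, the equation reads
\[
\pa_t u + (n^2+\eta^2)Lu = F(t,x),
\]
where $F$ collects the convective, pressure, cross, and drag terms; by the bounds on $n$, $v$, and on $\int f\,d\xi \in L^\infty(0,T; H^2)$ (which follows from the $\nu_p$ weight with $p\ge 2$), one checks $F \in L^\infty(0,T; H^1)\cap L^2(0,T; H^2)$. Since $\eta>0$, the operator $(n^2+\eta^2)L$ is uniformly elliptic, and standard parabolic theory (Galerkin approximation, passing to the limit via uniform energy estimates) yields $u\in \mc([0,T]; H^{s'}) \cap L^\infty(0,T; H^3)$ and $\pa_t u \in \mc([0,T]; H^1) \cap L^2(0,T; \dot H^2)$. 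The degenerate bound $n\nabla^4 u \in L^2(0,T; L^2)$ is then obtained, as in \cite{LPZ19}, by multiplying the third-order differentiated equation by $n^2\nabla^3 u$ and exploiting the BD-type cancellation. Uniqueness at each of the three stages follows from an $L^2$-type estimate on the difference of two solutions. I expect the main obstacle to be the weighted Sobolev estimate for $f$: the drag term naively demands control of one extra $\xi$-moment, and the exponential weight is the precise device that keeps the estimates closed at the prescribed order; everything else reduces to standard linear transport and uniformly parabolic theory available in \cite{CK15, LPZ19}.
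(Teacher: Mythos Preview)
Your proposal is correct and follows exactly the strategy the paper sketches in the paragraph preceding the proposition: solve the linear transport equation for $n$ first (characteristics/$H^3$ energy estimates), then the kinetic equation for $f$ (characteristics plus the weighted $H^{2,p}_\nu$ estimate, where the exponential weight in $\xi$ prevents moment loss), and finally the uniformly parabolic equation for $u$ (standard theory, since $\eta>0$); the paper in fact omits the detailed proof and simply refers to \cite{CK15, LPZ19}. One small correction: the linearized continuity equation $\pa_t n + v\cdot\nabla n = -\theta^{-1}\psi\,\nabla\cdot v$ has a source term of indefinite sign, so a maximum principle along characteristics does \emph{not} preserve $n\ge 0$; fortunately the proposition does not assert $n\ge 0$, and the powers $n^{m\theta}$, $n^{\theta(m-1)}$ appearing downstream are handled in the paper's estimates via $\|n\|_{L^\infty}$ bounds rather than sign information.
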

 
%%%%%%%%%%%%%%%%%%%%%%%%%%%%%%%%%%%%%%%%%%%%%%%%%%%%%%%%%%%%%%%%%%%%%%%%%%%%%%%%%%%%%%
%
%  \subsection{Uniform bound estimates}
%
%%%%%%%%%%%%%%%%%%%%%%%%%%%%%%%%%%%%%%%%%%%%%%%%%%%%%%%%%%%%%%%%%%%%%%%%%%%%%%%%%%%%%%

\subsection{Uniform-in-$\eta$ bound estimates} In this subsection, we present uniform-in-$\eta$ bound estimates for the solution $(f,n,u)$ obtained in Proposition \ref{prop_lsol}. After establishing the required uniform bound estimates, we pass to the limit $\eta \to 0$ to have the existence of regular solutions to the system \eqref{lin_sys} without the artificial viscosity. 

For the bound estimates, we follow the strategy recently proposed in \cite{LPZ19}. We first choose a positive constant $\epsilon_0 > 1$ such that
\bq\label{mass_2}
2 + n_\infty + \|f_0\|_{H^{2,p}_\nu} + \|n_0\|_{L^\infty} + \|n_0 - n_\infty\|_{H^3} +  \|u_0\|_{H^3} \leq \epsilon_0.
\eq
We then assume that  for some $T^* \in (0,T]$,
\begin{align}\label{mass_3}
\begin{aligned}
&\sup_{0 \leq t \leq T^*}\lt(\|\psi(\cdot,t)-n_\infty\|_{H^3}^2 + \|v(\cdot,t)\|_{H^1}^2 \rt)\leq \epsilon_1^2,\cr
&\sup_{0 \leq t \leq T^*}\lt(\|\pa_t \psi(\cdot,t)\|_{L^2}^2 + \|\nabla^2 v(\cdot,t)\|_{L^2}^2  + \|\pa_t v(\cdot,t)\|_{L^2}^2\rt) \cr
&\hspace{3cm} + \int_0^{T^*} \lt(\|\psi(\cdot,t)\nabla^3 v(\cdot,t)\|_{L^2}^2 + \|\pa_t\nabla v(\cdot,t)\|_{L^2}^2 \rt)dt \leq \epsilon_2^2,\cr
&\sup_{0 \leq t \leq T^*}\lt(\|\pa_t \nabla \psi(\cdot,t)\|_{L^2}^2 + \|\nabla^3 v(\cdot,t)\|_{L^2}^2 + \|\pa_t \nabla v(\cdot,t)\|_{L^2}^2 \rt) \cr
&\hspace{3cm}+ \int_0^{T^*} \lt(\|\psi(\cdot,t) \nabla^4 v(\cdot,t)\|_{L^2}^2 + \|\pa_t \nabla^2 v(\cdot,t)\|_{L^2}^2 \rt) dt \leq \epsilon_3^2, \quad \mbox{and}\cr
&\sup_{0 \leq t \leq T^*} \|\pa_t \nabla^2 \psi(\cdot,t)\|_{L^2}^2 \leq \epsilon_4^2,\cr
\end{aligned}
\end{align}
where the constants $\epsilon_i, i=1,\dots, 4$ satisfy $\epsilon_i \leq \epsilon_{i+1}$ for $i=0,1,2,3$. Here the constants $\epsilon_i, i=1,\dots, 4$ and $T^*$ will be determined later. 

Then our goal of this part is to prove the following uniform-in-$\eta$ estimates. 
\begin{lemma}\label{lem_lsol}Let the assumptions of Proposition \ref{prop_lsol} be satisfied. If the assumptions \eqref{mass_3} hold, then the solution $(f,n,u)$ satisfies
\begin{align}\label{unif_bd}
\begin{aligned}
&\sup_{0 \leq t \leq T^*}\lt(\|n(\cdot,t)-n_\infty\|_{H^3}^2 + \|u(\cdot,t)\|_{H^1}^2 + \|f(\cdot,\cdot,t)\|_{H^{2,p}_\nu}^2 \rt)\leq \epsilon_1^2,\cr
&\sup_{0 \leq t \leq T^*}\lt(\|\pa_t n(\cdot,t)\|_{L^2}^2 + \|\nabla^2 u(\cdot,t)\|_{L^2}^2  + \|\pa_t u(\cdot,t)\|_{L^2}^2 + \|\pa_t f(\cdot,\cdot,t)\|_{L^{2,p-2}_\nu}^2\rt)\\
&\hspace{4cm}+ \int_0^{T^*} \lt(\|n(\cdot,t)\nabla^3 u(\cdot,t)\|_{L^2}^2 + \|\pa_t\nabla u(\cdot,t)\|_{L^2}^2 \rt)dt \leq \epsilon_2^2,\cr
&\sup_{0 \leq t \leq T^*}\lt(\|\pa_t \nabla n(\cdot,t)\|_{L^2}^2 + \|\nabla^3 u(\cdot,t)\|_{L^2}^2 + \|\pa_t \nabla u(\cdot,t)\|_{L^2}^2 +\|\pa_t \nabla_{(x,\xi)} f(\cdot,\cdot,t)\|_{L^{2,p-2}_\nu}^2\rt)\\
&\hspace{4cm}+ \int_0^{T^*} \lt(\|n(\cdot,t)\nabla^4 u(\cdot,t)\|_{L^2}^2 + \|\pa_t \nabla^2 u(\cdot,t)\|_{L^2}^2 \rt)dt \leq \epsilon_3^2, \quad \mbox{and}\cr
&\sup_{0 \leq t \leq T^*} \|\pa_t \nabla^2 n(\cdot,t)\|_{L^2}^2 \leq \epsilon_4^2.
\end{aligned}
\end{align}
\end{lemma}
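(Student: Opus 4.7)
The plan is to establish \eqref{unif_bd} by an equation-by-equation energy argument, closed by a Gr\"onwall estimate on a sufficiently small $[0,T^*]$. Because the coefficients $(\psi, v)$ are frozen as given data satisfying \eqref{mass_3}, every estimate will take the schematic form $\tfrac{d}{dt} \mathcal{E}(t) \ls P(\epsilon_1, \dots, \epsilon_4)\,(1 + \mathcal{E}(t))$, where $\mathcal{E}(t)$ collects the norms on the left-hand side of \eqref{unif_bd}. Starting from the initial bound \eqref{mass_2} with $\epsilon_0 \le \epsilon_1$, this yields \eqref{unif_bd} on some time interval whose length depends only on $\epsilon_0, \dots, \epsilon_4$. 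The estimates will be independent of the artificial viscosity $\eta$ because $\eta$ enters only through the extra nonnegative dissipation $\eta^2 \|\nabla u\|^2$, which is simply discarded in upper bounds.

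For the density $n$, I would apply $\nabla^k$ ($0 \le k \le 3$) to the transport equation in \eqref{lin_sys}, test against $\nabla^k(n - n_\infty)$, and use Kato--Ponce commutator estimates to handle $[\nabla^k, v \cdot \nabla]n$ and $\nabla^k(\theta^{-1}\psi \nabla\cdot v)$; the bounds on $\pa_t \nabla^j n$ then follow by substitution from the equation. For $f$, I would differentiate the kinetic equation in $(x,\xi)$ up to order $2$ and pair $\nabla_{(x,\xi)}^\alpha f$ with $\nu_p \nabla_{(x,\xi)}^\alpha f$. Two weight computations are crucial. First, the contribution from $\xi \cdot \nabla_x$ paired with the polynomial part of $\nu_p$ is bounded pointwise by $C\nu_p$. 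Second, integration by parts in $\xi$ on the drag term produces $-\intrr n^{m\theta}(v - \xi)\cdot \nabla_\xi \nu_p \, (\nabla^\alpha f)^2\,dxd\xi$, whose $-\xi$-part equals, after partial cancellation with the polynomial weight, the strictly dissipative term $2\intrr n^{m\theta}|\xi|^2 e^{|\xi|^2}(1+|x|^2+|\xi|^2)^{p/2}(\nabla^\alpha f)^2\,dxd\xi$. This positive term absorbs the next-higher velocity moment generated by the drag operator and resolves the moment-chain issue flagged in the discussion after Theorem \ref{T1.1}. Combined with $\|n\|_{L^\infty} \ls n_\infty + \|n - n_\infty\|_{H^3}$ and $\|v\|_{L^\infty} \ls \|v\|_{H^2}$, this closes $\|f\|_{H^{2,p}_\nu}$, and the $\pa_t f$ bounds follow directly from the kinetic equation.

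The momentum equation is the core difficulty. I would test it successively against $u$, $Lu$, $\pa_t u$, and their first and second spatial derivatives. Integration by parts on $(n^2 + \eta^2) Lu$ delivers the uniform-in-$\eta$ dissipation $\int_0^{T^*} \|n \nabla^{k+1} u\|_{L^2}^2 \,dt$ for $k = 1, 2, 3$. The transport term $v \cdot \nabla u$ is treated by commutator estimates using $v \in L^\infty_t H^3$. The coupling terms $\nabla(n^{\theta(\gamma-1)})$ and $\tfrac{\delta}{\delta-1}\nabla(n^2)\cdot \ml v$ are controlled in terms of $\|n - n_\infty\|_{H^3}$, $\|v\|_{H^3}$ and $H^3 \hookrightarrow L^\infty$. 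The drag term $n^{\theta(m-1)} \intr (v - \xi) f\,d\xi$ is handled by invoking the assumption $\delta \le (2m+1)/3$, which gives $\theta(m-1) = 2(m-1)/(\delta-1) \ge 3$: writing $n^{\theta(m-1)} = n^2 \cdot n^{\theta(m-1) - 2}$, the factor $n^2$ pairs with the degenerate viscous dissipation, while $n^{\theta(m-1)-2}$ is bounded in $L^\infty$ by Sobolev embedding. The velocity moment $\intr |\xi|\,f\,d\xi$ is absorbed into $\|f\|_{L^{2,p}_\nu}$ by Cauchy--Schwarz against the exponential weight $e^{|\xi|^2}$.

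The hardest step is the $H^3$-estimate for $u$: commuting $\nabla^3$ through $(n^2 + \eta^2) L$ produces top-order terms such as $\nabla^3(n^2)\cdot L u$, for which only the weighted $n \nabla^4 u \in L^2_{t,x}$ bound is available uniformly in $\eta$. Following the bookkeeping of \cite{LPZ19}, I would expand $\nabla^3(n^2) = 2n\nabla^3 n + \text{lower}$ and pair the remaining factor $n$ with $\nabla^4 u$, controlling the lower-order commutator terms by $\|\nabla^3 n\|_{L^2}$, $\|\nabla n\|_{L^\infty}$, and $\|\nabla^2 n\|_{L^3}$ through Gagliardo--Nirenberg interpolations. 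Assembling the four blocks (density, kinetic, velocity, and time derivatives) yields a closed differential inequality of the form $\sum_i \mathcal{E}_i(t) \le \epsilon_0^2 + C\int_0^t Q(\epsilon_1,\dots,\epsilon_4)\bigl(1 + \sum_i \mathcal{E}_i(s)\bigr)\,ds$, and choosing $T^* = T^*(\epsilon_0,\dots,\epsilon_4)$ small enough delivers \eqref{unif_bd}.
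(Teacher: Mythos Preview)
Your proposal follows the same energy-estimate strategy as the paper, which proves Lemma~\ref{lem_lsol} by combining three sub-lemmas (for $n$, $f$, and $u$ separately, Lemmas~\ref{lem_n}--\ref{lem_u}) and then choosing the constants $\epsilon_i$ and $T^*$. Your treatment of each component---the transport estimate for $n$, the exponential-weight dissipation $-\intrr \nu_p\,n^{m\theta}|\xi|^2 (\nabla^\alpha f)^2\,dxd\xi$ that closes the moment chain for $f$, the degenerate-viscosity dissipation $\|n\nabla^{k+1}u\|_{L^2}^2$, and the pairing of one factor of $n$ from $n^{\theta(m-1)}$ with that dissipation in the drag term---matches the paper's mechanism in all essentials (the paper tests against $\pa^k u$ directly rather than against $Lu$, but this is cosmetic).

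The one point worth sharpening is the closing step. You write that ``choosing $T^*$ small enough'' delivers \eqref{unif_bd}, but a single lumped Gr\"onwall inequality on $\sum_i\mathcal{E}_i$ cannot reproduce the \emph{tiered} structure of \eqref{unif_bd}, where different quantities are bounded by different $\epsilon_i$'s. The paper instead derives bounds of the specific form $\|n-n_\infty\|_{H^3}\le C\epsilon_0$, $\|\nabla^k\pa_t n\|_{L^2}\le C\epsilon_1\epsilon_{k+1}$, $\|\pa_t\nabla^k u\|_{L^2}\le C\epsilon_0^{\theta(\zeta-1)+1}\epsilon_{k+1}$, etc., on nested intervals $T_1\ge T_2\ge T_3$, and then \emph{defines} $\epsilon_1,\dots,\epsilon_4$ as explicit increasing polynomials in $\epsilon_0$ (namely $\epsilon_1=C^{1/2}\epsilon_0$, $\epsilon_2=C^{1/2}\epsilon_0^{\theta\zeta+1}\epsilon_1$, $\epsilon_3=C^{1/2}\epsilon_0\epsilon_2$, $\epsilon_4=C^{1/2}\epsilon_0\epsilon_3$) so that each derived bound fits inside the corresponding $\epsilon_i^2$. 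This hierarchical choice, not merely small time, is what makes the self-map structure of the eventual iteration work.
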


In the following subsections, we provide the bound estimates on $n$, $f$, and $u$ step by step. 
%%%%%%%%%%%%%%%%%%%%%%%%%%%%%%%%%%%%%%%%%
%
%
%
%
%%%%%%%%%%%%%%%%%%%%%%%%%%%%%%%%%%%%%%%%%
\subsubsection{Estimates on the fluid density $n$}
\begin{lemma}\label{lem_n}Suppose that the initial data \eqref{ini_lin_sys} satisfy \eqref{init_cond}. Furthermore, we assume that the given vector field $v$ satisfies \eqref{mass_3}. Then there exists a unique solution $n$ to the continuity equation in \eqref{lin_sys} such that 
$$\begin{aligned}
& \|n(\cdot,t) - n_\infty\|_{H^3} \leq C\epsilon_0 \quad \mbox{and} \quad \|\nabla^k\pa_t n(\cdot,t)\|_{L^2} \leq C\epsilon_1\epsilon_{k+1}, \quad k=0,1,2
\end{aligned}$$
for $0 \leq t \leq T_1 := \min \{ T^*,  \epsilon_3^{-2}\}.$
\end{lemma}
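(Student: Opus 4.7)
My plan is to view the continuity equation for $n$ as a linear transport equation with source. Setting $\tilde n := n - n_\infty$, the equation becomes
\[
\pa_t \tilde n + v\cdot\nabla \tilde n = -\theta^{-1}\psi\,\nabla\cdot v,\qquad \tilde n|_{t=0}=n_0-n_\infty,
\]
and since $v\in L^\infty(0,T^*;H^3)\hookrightarrow L^\infty(0,T^*;W^{1,\infty})$ with $\psi\,\nabla\cdot v\in L^\infty(0,T^*;H^2)$, standard linear transport theory yields the existence and uniqueness of $\tilde n\in \mc([0,T^*];H^3)$ with $\pa_t n \in \mc([0,T^*];H^2)$. The content of the lemma is thus the quantitative estimate on $\|\tilde n\|_{H^3}$; once that is in hand, the $\pa_t n$ bounds follow by reading $\pa_t n=-v\cdot\nabla\tilde n-\theta^{-1}\psi\,\nabla\cdot v$ off the equation.

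For the $H^3$ estimate I would apply $\nabla^k$ for $0\le k\le 3$ to the equation, pair with $\nabla^k\tilde n$, and integrate by parts on the transport term to obtain
\[
\tfrac12\tfrac{d}{dt}\|\nabla^k\tilde n\|_{L^2}^2 \ls \|\nabla v\|_{L^\infty}\|\nabla^k\tilde n\|_{L^2}^2 + \|[\nabla^k,v\cdot\nabla]\tilde n\|_{L^2}\|\nabla^k\tilde n\|_{L^2} + \|\nabla^k(\psi\,\nabla\cdot v)\|_{L^2}\|\nabla^k\tilde n\|_{L^2}.
\]
Kato--Ponce commutator estimates and Moser-type product inequalities, combined with the pointwise-in-time bounds $\|v\|_{H^3}\ls \epsilon_3$ and $\|\psi-n_\infty\|_{H^3}\ls \epsilon_1$ from \eqref{mass_3}, control every term for $k\le 2$ and all but one piece at $k=3$. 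The delicate contribution is $\psi\,\nabla^4 v$, which arises when all three derivatives in $\nabla^3(\psi\,\nabla\cdot v)$ fall on $v$; for this factor only the time-integrated bound $\int_0^{T^*}\|\psi\,\nabla^4 v\|_{L^2}^2\,dt\le \epsilon_3^2$ is assumed, not a pointwise-in-time one.

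To absorb it, I would divide the summed $k=0,\ldots,3$ inequality by $\|\tilde n\|_{H^3}$ and integrate in time to arrive at a Gr\"onwall-type estimate
\[
\|\tilde n(t)\|_{H^3}\le \exp\lt(C\int_0^t\|v\|_{H^3}\,ds\rt)\lt(\|\tilde n_0\|_{H^3}+C\int_0^t\bigl(\|\psi\,\nabla^4 v\|_{L^2}+\textup{lower order}\bigr)ds\rt).
\]
By Cauchy--Schwarz, $\int_0^t\|\psi\,\nabla^4 v\|_{L^2}\,ds\le t^{1/2}\epsilon_3$, while the remaining pointwise-bounded pieces contribute at most $Ct\,\epsilon_1\epsilon_3$; the constraint $t\le T_1:=\min\{T^*,\epsilon_3^{-2}\}$ together with $\epsilon_0\ge 1$ renders both quantities of order one, so the right-hand side is $\le C\epsilon_0$. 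Finally, differentiating the identity $\pa_t n=-v\cdot\nabla\tilde n-\theta^{-1}\psi\,\nabla\cdot v$ up to twice and using the just-obtained $\|\tilde n\|_{H^3}\le C\epsilon_0\le C\epsilon_1$ together with the graded Sobolev bounds on $v$ and $\psi$ in \eqref{mass_3}, standard product estimates give $\|\nabla^k\pa_t n\|_{L^2}\le C\epsilon_1\epsilon_{k+1}$ for $k=0,1,2$. The principal obstruction is precisely the top-order loss in $\psi\nabla^4 v$, which is what forces $T_1$ to scale as $\epsilon_3^{-2}$.
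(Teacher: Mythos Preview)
Your proposal is correct and follows essentially the same approach as the paper: an $H^3$ energy estimate on the transport equation for $\tilde n=n-n_\infty$, with the top-order source term $\psi\,\nabla^4 v$ isolated and controlled via Cauchy--Schwarz in time using the $L^2_t$ bound from \eqref{mass_3}, leading to the choice $T_1=\min\{T^*,\epsilon_3^{-2}\}$; the $\pa_t n$ bounds are then read off the equation exactly as you describe. The paper carries out the same steps, estimating the commutator and source terms by hand rather than invoking Kato--Ponce/Moser by name, but the structure and the identification of $\psi\,\nabla^4 v$ as the term forcing the time restriction are identical.
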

\begin{proof} It follows from the continuity equation in \eqref{lin_sys} that for $0 \leq k \leq 3$
\[
\pa_t \pa^k (n - n_\infty) + \pa^k (v \cdot \nabla (n - n_\infty) ) + \theta^{-1}\pa^k (\psi \nabla \cdot v)= 0.
\]
This yields
$$\begin{aligned}
\frac12\frac{d}{dt}\|\pa^k (n - n_\infty) \|_{L^2}^2 &= \intr \pa^k (n - n_\infty) \cdot \pa_t \pa^k (n - n_\infty)\,dx \cr
&= -\intr \pa^k (n - n_\infty) \cdot \lt(\pa^k (v \cdot \nabla (n - n_\infty)) + \theta^{-1}\pa^k (\psi \nabla \cdot v) \rt) dx \cr
&=: \sfI_1 + \sfI_2.
\end{aligned}$$
Here, $\sfI_1$ can be estimated as follows.
$$\begin{aligned}
\sfI_1 &= - \intr \pa^k (n - n_\infty) \cdot \Big(v \cdot \nabla \pa^k (n - n_\infty) + \lt( \pa^k (v \cdot \nabla (n - n_\infty)) - v \cdot \nabla \pa^k (n - n_\infty) \rt) \Big) dx\cr
&\ls \|\nabla v\|_{L^\infty}\|\pa^k (n - n_\infty)\|_{L^2}^2 \cr
&\quad + \|\pa^k (n - n_\infty)\|_{L^2}\lt(\|\nabla (n - n_\infty)\|_{L^\infty}\|\pa^k v\|_{L^2} + \|\nabla v\|_{L^\infty}\|\pa^k (n - n_\infty)\|_{L^2} \rt)(1-\delta_{k,0})\cr
&\ls \|\nabla v\|_{L^\infty}\|\pa^k (n - n_\infty)\|_{L^2}^2 + \|\nabla (n - n_\infty)\|_{L^\infty}\|\pa^k (n - n_\infty)\|_{L^2}\|\pa^k v\|_{L^2}(1-\delta_{k,0}),
\end{aligned}$$
where $\delta_{k,0}$ denotes the Kronecker's delta, i.e., $\delta_{ij} = 1$ for $i=j$ and $\delta_{ij} = 0$ otherwise. 

For $\sfI_2$, we have
\begin{align*}
\sfI_2&= -\theta^{-1} \int_{\R^3} \lt(\psi \nabla \cdot (\pa^k v) + (\pa^k (\psi \nabla \cdot v) - \psi \nabla \cdot (\pa^k v)\rt) \cdot \pa^k(n-n_\infty)\,dx \\
& \ls \lt(\|\psi \nabla^{k+1} v\|_{L^2} \|\pa^k (n-n_\infty)\|_{L^2} + \lt(\|\nabla \psi\|_{L^\infty} \|\pa^k v\|_{L^2} + \|\pa^k \psi\|_{L^2} \|\nabla v\|_{L^\infty} \rt)\|\pa^k(n-n_\infty)\|_{L^2}\rt)(1-\delta_{k,0}).
\end{align*}
 Thus, by summing over $0 \leq k \leq 3$, we obtain
$$\begin{aligned}
&\frac{d}{dt}\|n - n_\infty\|_{H^3}^2 \cr
&\quad \ls \|\nabla v\|_{L^\infty}\|n - n_\infty\|_{H^3}^2 + \|\nabla (n - n_\infty)\|_{L^\infty}\|n - n_\infty\|_{H^3}\|\nabla v\|_{H^2} \cr
&\qquad + \|\psi-n_\infty\|_{H^3} \|\nabla v\|_{H^2} \|n-n_\infty\|_{H^3} + \|\psi \nabla^4 v\|_{L^2}\|n-n_\infty\|_{H^3} \cr
&\quad \ls \|\nabla v\|_{H^2} \|n - n_\infty\|_{H^3}^2 + \|\psi-n_\infty\|_{H^3} \|\nabla v\|_{H^2} \|n-n_\infty\|_{H^3} + \|\psi \nabla^4 v\|_{L^2}\|n-n_\infty\|_{H^3}.
\end{aligned}$$
This and together with applying the Gr\"onwall's lemma gives
$$\begin{aligned}
&\|n(\cdot,t) - n_\infty\|_{H^3} \cr
&\quad \leq C\|n_0 - n_\infty\|_{H^3}  \exp\lt(C\int_0^t\|\nabla v(\cdot,s)\|_{H^2}\,ds\rt)\cr
&\qquad + C  \lt(\int_0^t\|\psi(\cdot,s)-n_\infty\|_{H^3} \|\nabla v(\cdot,s)\|_{H^2} +\|\psi(\cdot,s)\nabla^4 v(\cdot,s)\|_{L^2}\,ds\rt) \exp\lt(C\int_0^t\|\nabla v(\cdot,s)\|_{H^2}\,ds\rt)\cr
&\quad \leq C\lt(\|n_0 - n_\infty\|_{H^3}+ \epsilon_1 \epsilon_3 t+ \sqrt{t}\lt(\int_0^t\|\psi(\cdot,s)\nabla^4 v(\cdot,s)\|_{L^2}^2\,ds\rt)^{1/2}\rt)\exp\lt(C\epsilon_3 t\rt).
\end{aligned}$$
We then use the inequality \eqref{mass_2} and the assumption \eqref{mass_3} to get
\[
\|n(\cdot,t) - n_\infty\|_{H^3} \leq C(\epsilon_0 + \epsilon_1\epsilon_3 t+ \epsilon_3\sqrt{t})e^{C\epsilon_3 t} \quad \mbox{for} \quad 0 \leq t \leq T^*.
\]
Choosing $T_1 = \min \{ T^*,  \epsilon_3^{-2}\}$ implies
\[
\|n(\cdot,t) - n_\infty\|_{H^3} \leq C\epsilon_0 \quad \mbox{for} \quad 0 \leq t \leq T_1.
\]
Using the above estimate, we easily estimate the $\dot{H}^k$-norm of $\pa_t n$ for $k=0,1,2$ as
$$\begin{aligned}
\|\pa_t n\|_{L^2} &\ls \|v\|_{L^2}\|\nabla n\|_{L^\infty} + (\|\psi - n_\infty\|_{L^\infty} + n_\infty)\|\nabla v\|_{L^2} \leq C\epsilon_1^2 , \cr
\|\nabla \pa_t n\|_{L^2} &\ls \|\nabla v \cdot \nabla n\|_{L^2} + \|v \cdot \nabla^2 n\|_{L^2} + \|\nabla \psi \cdot \nabla v\|_{L^2} + \|\psi \nabla^2 v\|_{L^2} \cr
&\ls (\|\nabla n\|_{L^\infty} + \|\nabla \psi\|_{L^\infty})\|\nabla v\|_{L^2} + \|v\|_{L^\infty}\|\nabla^2 n\|_{L^2} + (\|\psi - n_\infty\|_{L^\infty} + n_\infty)\|\nabla^2 v\|_{L^2} \cr
&\leq C\epsilon_1\epsilon_2,
\end{aligned}$$
and
$$\begin{aligned}
\|\nabla^2 \pa_t n\|_{L^2} &\ls \|\nabla^2v \cdot \nabla n\|_{L^2} + \|\nabla v \cdot \nabla^2 n\|_{L^2} + \|v \cdot \nabla^3 n\|_{L^2} + \|\nabla^2 \psi \cdot \nabla v\|_{L^2} + \|\nabla \psi \cdot \nabla^2 v\|_{L^2} + \|\psi \nabla^3 v\|_{L^2}\cr
&\ls (\|\nabla n\|_{L^\infty} + \|\nabla \psi\|_{L^\infty})\|\nabla^2 v\|_{L^2} + \|\nabla v\|_{L^\infty}(\|\nabla^2 n\|_{L^2} + \|\nabla^2 \psi\|_{L^2}) + \|v\|_{L^\infty}\|\nabla^3 n\|_{L^2} \cr
&\quad + (\|\psi - n_\infty\|_{L^\infty} + n_\infty)\|\nabla^3 v\|_{L^2}\cr
&\leq C\epsilon_1\epsilon_3.
\end{aligned}$$
Here we used $n_\infty \leq \epsilon_0$. This completes the proof.
\end{proof}
%%%%%%%%%%%%%%%%%%%%%%%%%%%%%%%%%%%%%%%%%
%
%
%
%
%%%%%%%%%%%%%%%%%%%%%%%%%%%%%%%%%%%%%%%%%

\subsubsection{Estimates on the particle distribution $f$}

\begin{lemma}\label{lem_f}
Suppose that the initial data \eqref{ini_lin_sys} satisfy \eqref{init_cond}. Furthermore, we assume that the given vector field $v$ satisfies \eqref{mass_3}. Then for $p \in [2,\infty)$, there exists a unique solution $f$ to the kinetic equation in \eqref{lin_sys} such that 
\[
\sup_{0 \leq t \leq T_2}\|f\|_{H^{2,p}_\nu} \leq C\epsilon_0 \quad \mbox{and} \quad  \sup_{0 \leq t \leq T_2}\|\pa_t \nabla^k_{(x,\xi)}f\|_{L^{2,p-2}_\nu} \leq C\epsilon_0^{m\theta+1} \epsilon_{k+1}, \quad k=0,1,
\]
where $T_2 = \min\lt\{ T_1, (\epsilon_0^{m\theta}\epsilon_3^2)^{-1}\rt\}$.
\end{lemma}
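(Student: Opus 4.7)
The plan is to take the existence and uniqueness of $f \ge 0$ in the appropriate regularity class from Proposition \ref{prop_lsol}, and to derive the quantitative bounds by running weighted energy estimates on the kinetic equation in \eqref{lin_sys} at the derivative levels $|\alpha|\le 2$ simultaneously. Concretely, for $\pa^\alpha := \pa_{(x,\xi)}^\alpha$ with $|\alpha|\le 2$, I apply $\pa^\alpha$ to the equation, multiply by $\nu_p\,\pa^\alpha f$, integrate over $\R^3\times\R^3$, and sum to obtain a Gr\"onwall inequality for $\|f(\cdot,\cdot,t)\|_{H^{2,p}_\nu}^2$.

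The transport contribution reduces, after an integration by parts, to $\frac12\intrr(\xi\cdot\nabla_x\nu_p)(\pa^\alpha f)^2\,dxd\xi$ plus a commutator $[\pa^\alpha,\xi\cdot\nabla_x]f$; the former is bounded using $|\xi\cdot\nabla_x\nu_p|\le C\nu_p$ (from $2|x\cdot\xi|\le|x|^2+|\xi|^2$), while the commutator is at the same order and is bounded in $L^{2,p}_\nu$ by $\|f\|_{H^{|\alpha|,p}_\nu}$. For the drag term, expanding
\[
\pa^\alpha \nabla_\xi\cdot\lt(n^{m\theta}(v-\xi)f\rt) = n^{m\theta}(v-\xi)\cdot\nabla_\xi\pa^\alpha f - 3 n^{m\theta}\pa^\alpha f + \mathcal{R}_\alpha,
\]
and integrating by parts on the leading piece produces
\[
\tfrac12\intrr n^{m\theta}(v-\xi)\cdot\nabla_\xi\nu_p\,(\pa^\alpha f)^2\,dxd\xi - \tfrac32\intrr n^{m\theta}\nu_p\,(\pa^\alpha f)^2\,dxd\xi + \text{(terms from $\mathcal{R}_\alpha$)}.
\]
Using $\nabla_\xi\nu_p = p\xi(1+|x|^2+|\xi|^2)^{(p-2)/2}e^{|\xi|^2}+2\xi\,\nu_p$ and $(v-\xi)\cdot\xi = v\cdot\xi - |\xi|^2$, I extract the crucial dissipative contribution $-\intrr n^{m\theta}|\xi|^2\nu_p(\pa^\alpha f)^2\,dxd\xi$, together with sign-indefinite pieces involving $v\cdot\xi$ that are absorbed by Young's inequality against this dissipation. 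The commutator $\mathcal{R}_\alpha$ couples $(\le 2)$-order derivatives of $n^{m\theta}$, $v$, and $f$, and is bounded by $C\epsilon_0^{m\theta}(1+\|n\|_{H^3})(1+\|v\|_{H^3})\|f\|_{H^{2,p}_\nu}^2$ using Lemma \ref{lem_n} for derivatives of $n^{m\theta}$ together with $H^2\hookrightarrow L^\infty$ and the bounds \eqref{mass_3} for $v$.

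Summing over $|\alpha|\le 2$ and applying Gr\"onwall's lemma yields, for $t\le T_1$,
\[
\|f(\cdot,\cdot,t)\|_{H^{2,p}_\nu}^2 \le \|f_0\|_{H^{2,p}_\nu}^2\,\exp\lt(C\epsilon_0^{m\theta}\int_0^t\lt(1+\|v(\cdot,s)\|_{H^3}\rt)ds\rt).
\]
Since \eqref{mass_3} gives $\int_0^t\|v(\cdot,s)\|_{H^3}\,ds \le C\epsilon_3\sqrt t$, the choice $T_2 = \min\{T_1,(\epsilon_0^{m\theta}\epsilon_3^2)^{-1}\}$ keeps the exponential of order one and produces $\|f(t)\|_{H^{2,p}_\nu}\le C\epsilon_0$ on $[0,T_2]$. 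The $\pa_t$-bounds follow directly from $\pa_t f = -\xi\cdot\nabla f - \nabla_\xi\cdot(n^{m\theta}(v-\xi)f)$: differentiating in $(x,\xi)$ at order $k\in\{0,1\}$ and estimating in $L^{2,p-2}_\nu$, the two-unit drop in the polynomial weight absorbs the factor $|\xi|$ from the transport/drag via $|\xi|^2\nu_{p-2}\le \nu_p$, and combining the already-established $\|f\|_{H^{2,p}_\nu}\le C\epsilon_0$ with the $n$-bounds (Lemma \ref{lem_n}) and the $v$-bounds \eqref{mass_3} yields $\|\pa_t\nabla^k_{(x,\xi)}f\|_{L^{2,p-2}_\nu} \le C\epsilon_0^{m\theta+1}\epsilon_{k+1}$.

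The main obstacle is the closing of the highest-derivative drag estimate at $|\alpha|=2$: whenever a $\xi$-derivative falls on the drift coefficient $v-\xi$, or the commutator $[\pa^\alpha,\xi\cdot\nabla_x]$ places an additional factor of $\xi$ on $f$, one is forced to control a \emph{one-order-higher} velocity moment than is supplied by $\|f\|_{H^{2,p}_\nu}$. The exponential weight $e^{|\xi|^2}$ is exactly what rescues the estimate: the dissipative $-n^{m\theta}|\xi|^2\nu_p(\pa^\alpha f)^2$ term extracted from $2\xi\,\nu_p$ in $\nabla_\xi\nu_p$ upgrades any polynomial moment by two orders and absorbs these commutators, which a purely polynomial weight cannot accomplish, as emphasized in the introduction.
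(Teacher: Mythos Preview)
Your strategy is the same as the paper's, and the mechanism you highlight---extracting the dissipative term $-\intrr n^{m\theta}|\xi|^2\nu_p(\pa^\alpha f)^2$ from the exponential part of $\nabla_\xi\nu_p$ and using it to absorb the dangerous $|\xi|$-weighted commutators---is exactly right. However, two points in your write-up do not close as stated.

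First, the Gr\"onwall arithmetic is off. When you absorb the sign-indefinite piece $\intrr n^{m\theta}\nu_p\,(v\cdot\xi)(\pa^\alpha f)^2$ against the dissipation by Young's inequality, you inevitably produce a factor $\|v\|_{L^\infty}^2$, not $\|v\|_{L^\infty}$; the correct differential inequality is
\[
\frac{d}{dt}\|f\|_{H^{2,p}_\nu}^2 \le C(1+\|n\|_{L^\infty})^{m\theta}(1+\|\nabla n\|_{H^2})^4(1+\|v\|_{H^3})^2\|f\|_{H^{2,p}_\nu}^2 \le C\epsilon_0^{m\theta}\epsilon_3^2\,\|f\|_{H^{2,p}_\nu}^2,
\]
using that \eqref{mass_3} gives the \emph{pointwise-in-time} bound $\|v(\cdot,t)\|_{H^3}\le C\epsilon_3$, not merely an $L^2_t$ bound. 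Then the exponent is $C\epsilon_0^{m\theta}\epsilon_3^2\,t$, and with $T_2=(\epsilon_0^{m\theta}\epsilon_3^2)^{-1}$ this is $O(1)$. Your version---linear in $\|v\|_{H^3}$ combined with $\int_0^t\|v\|_{H^3}\,ds\le C\epsilon_3\sqrt t$---yields an exponent $C\epsilon_0^{m\theta}\epsilon_3\sqrt{T_2}=C\epsilon_0^{m\theta/2}$, which is \emph{not} bounded independently of the data; the conclusion does not follow from what you wrote.

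Second, the claim that ``$\mathcal{R}_\alpha$ is bounded by $C\epsilon_0^{m\theta}(1+\|n\|_{H^3})(1+\|v\|_{H^3})\|f\|_{H^{2,p}_\nu}^2$'' is too quick for $|\alpha|=2$. The transport commutator $[\pa^\alpha,\xi\cdot\nabla_x]$ never produces an extra $|\xi|$; the problematic terms are of the form $\pa_x^2(n^{m\theta})\,\xi\cdot\nabla_\xi f$ and $\pa_x(n^{m\theta})\,\xi\cdot\nabla_\xi\pa_x f$ coming from the drag commutator, and these are \emph{not} bounded directly in $L^{2,p}_\nu$ against $\pa^\alpha f$. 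One must split off $n^{m\theta/2}|\xi|\,\pa^\alpha f$, pair it with the dissipation $\intrr n^{m\theta}|\xi|^2\nu_p(\pa^\alpha f)^2$ via Young, and handle the remaining factor---which involves $\pa_x^2 n\notin L^\infty$---by an $L^6$--$L^2$--$L^3$ H\"older step together with the Sobolev bound $\lt\|\lt(\intr\nu_p|\nabla_\xi f|^2\,d\xi\rt)^{1/2}\rt\|_{L^3}\le C\|f\|_{H^{2,p}_\nu}$. This is precisely where the conditions $m\theta\ge 4$ and $p\ge 2$ are used. Your final paragraph gestures at the absorption but does not indicate this H\"older/Sobolev step, without which the $|\alpha|=2$ estimate does not close.
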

\begin{proof} 
For the smooth flow of reading, here we only provide the $H^{1,p}_\nu$-estimate of $f$ and postpone the rest to Appendix \ref{app_f}.

We begin with the $L^{2,p}_\nu$-estimate of $f$. It follows from the kinetic equation in \eqref{lin_sys} that
$$\begin{aligned}
\frac12\frac{d}{dt}\|f\|_{L^{2,p}_\nu}^2 &= -\intrr \nu_p(x,\xi) f \cdot \lt(\xi \cdot \nabla f + n^{m\theta}(v -\xi)\cdot\nabla_\xi f  - 3n^{m\theta} f \rt) dxd\xi\cr
&=\frac{p}{2}\intrr\nu_{p-2}(x,\xi)(x\cdot\xi) f^2\,dxd\xi +\frac32\intrr \nu_p(x,\xi)n^{m\theta} f^2\,dxd\xi \\
&\quad + \frac12\intrr(p\nu_{p-2}(x,\xi)+2\nu_p(x,\xi)) \xi\cdot(v-\xi) n^{m\theta} f^2\,dxd\xi\\
&\le C\|f\|_{L_\nu^{2,p}}^2 + C\|n\|_{L^\infty}^{m\theta}(1+\|v\|_{L^\infty})\|f\|_{L_\nu^{2,p}}^2\\
&\quad -\intrr \nu_p(x,\xi)|\xi|^2 n^{m\theta}f^2\,dxd\xi + \intrr \nu_p(x,\xi)v\cdot \xi n^{m\theta}f^2\,dxd\xi\\
&\le C\|f\|_{L_\nu^{2,p}}^2 + C\|n\|_{L^\infty}^{m\theta}(1+\|v\|_{L^\infty} + \|v\|_{L^\infty}^2)\|f\|_{L_\nu^{2,p}}^2,
\end{aligned}$$
where we used Young's inequality and the following identities:
\[
\nabla \nu_p(x,\xi) = p  \nu_{p-2}(x,\xi) x, \quad \nabla_\xi \nu_p(x,\xi) = (p  \nu_{p-2}(x,\xi) +2\nu_p(x,\xi))\xi
\]
and
\[
\nabla_\xi \cdot (\nu_p(x,\xi)  (v - \xi)) = (p \nu_{p-2}(x,\xi)+2\nu_p(x,\xi)) \xi \cdot (v - \xi) - 3 \nu_p(x,\xi).
\]
Thus we obtain
\bq\label{est_lf}
\frac{d}{dt}\|f\|_{L^{2,p}_\nu}^2 \leq C (1+\|n\|_{L^\infty}^{m\theta})(1+\|v\|_{L^\infty} )^2 \|f\|_{L^{2,p}_\nu}^2 
\eq
for $0 \leq t \leq T^*$.\\

\noindent For the $H^{1,p}_\nu$-estimate, we differentiate the kinetic equation in \eqref{lin_sys} with respect to $\pa_j, j=1,2,3$ to find
\[
\pa_t \pa_j f = - \xi \cdot \nabla \pa_j f  +3\pa_j ( n^{m\theta}f) - \pa_j(n^{m\theta}(v-\xi)\cdot\nabla_\xi f).
\]
Thus we obtain
$$\begin{aligned}
\frac12\frac{d}{dt}\|\pa_j f\|_{L^{2,p}_\nu}^2 & = - \intrr \nu_p(x,\xi)\pa_j f  (\xi \cdot \nabla \pa_j f) \, dxd\xi \cr
&\quad +3\intrr \nu_p(x,\xi)\pa_j f \pa_j (n^{m\theta}f)\,dxd\xi\\
&\quad - \intrr \nu_p(x,\xi)\pa_j f  \pa_j \lt( n^{m\theta}(v- \xi)\cdot \nabla_\xi f \rt) dxd\xi \cr
&=: \sfJ_1 + \sfJ_2+\sfJ_3,
\end{aligned}$$
where $\sfJ_1$ and $\sfJ_2$ can easily estimated as
\[\begin{aligned}
\sfJ_1&= \frac{p}{2}\intrr \nu_{p-2}(x,\xi)(x\cdot\xi)|\pa_j f|^2\,dx \le C\|\pa_j f\|_{L_\nu^{2,p}}^2,\\
\sfJ_2&\le C\|n\|_{L^\infty}^{m\theta-1}(\|\nabla n\|_{L^\infty} +\|n\|_{L^\infty})\|f\|_{H_\nu^{1,p}}^2.
\end{aligned}\]
On the other hand, we estimate $\sfJ_3$ as follows:
\[\begin{aligned}
\sfJ_3&= -\intrr \nu_p(x,\xi)\pa_j f \pa_j (n^{m\theta})(v-\xi)\cdot\nabla_\xi f\,dxd\xi\\
&\quad - \intrr \nu_p(x,\xi)\pa_j f n^{m\theta} \pa_j v \cdot \nabla_\xi f\,dxd\xi\\
&\quad -\intrr \nu_p(x,\xi)\pa_j f n^{m\theta}(v-\xi)\cdot\nabla_\xi(\pa_j f)\,dxd\xi\\
&=: \sfJ_3^1 + \sfJ_3^2 + \sfJ_3^3.
\end{aligned}\] 
For $\sfJ_3^1$, we use $m\theta>2$ and Young's inequality to have
\[\begin{aligned}
\sfJ_3^1&\le m\theta\intrr \nu_p(x,\xi) n^{m\theta-1}  |\pa_j f (\pa_j n)  v \cdot \nabla_\xi f|\,dxd\xi\\
&\quad +  \frac12\intrr \nu_p(x,\xi) |\xi|^2 n^{m\theta} |\pa_j f|^2\,dxd\xi + \frac{(m\theta)^2}{2}\intrr\nu_p(x,\xi)|\pa_j n|^2 n^{m\theta-2}|\nabla_\xi f|^2\,dxd\xi\\
&\le C\|n\|_{L^\infty}^{m\theta-2}(1+\|n\|_{L^\infty}+\|\nabla n\|_{H^2})^2(1+\|v\|_{L^\infty})\|f\|_{H_\nu^{1,p}}^2 \\
&\quad +   \frac12\intrr \nu_p(x,\xi) |\xi|^2n^{m\theta} |\pa_j f|^2\,dxd\xi.
\end{aligned}\]
For $\sfJ_3^2$, we easily get
\[
\sfJ_3^2\le \|n\|_{L^\infty}^{m\theta}\|\nabla v\|_{L^\infty} \|f\|_{H_\nu^{1,p}}^2.
\]
For $\sfJ_3^3$, we use Young's inequality to obtain
\[\begin{aligned}
\sfJ_3^3&= \frac12\intrr n^{m\theta}\nabla_\xi \cdot(\nu_p(x,\xi)(v-\xi))|\pa_jf|^2\,dxd\xi\\
&= \frac{p}{2}\intrr \nu_{p-2}(x,\xi)n^{m\theta}(v-\xi)\cdot\xi |\pa_j f|^2\,dx d\xi+ \intrr \nu_p(x,\xi) n^{m\theta}(v-\xi)\cdot\xi |\pa_j f|^2\,dxd\xi\\
&\quad -\frac32\intrr \nu_p(x,\xi)n^{m\theta}|\pa_j f|^2\,dxd\xi\\
&\le C\|n\|_{L^\infty}^{m\theta}(1+\|v\|_{L^\infty} + \|v\|_{L^\infty}^2)\|\pa_j f\|_{L_\nu^{2,p}}^2 - \frac12\intrr \nu_p(x,\xi)|\xi|^2 n^{m\theta}|\pa_j f|^2\,dxd\xi.
\end{aligned}\]
Combining the above observation yields
\bq\label{est_hf1}
\frac{d}{dt}\|\nabla f\|_{L^{2,p}_\nu}^2 \leq C \|n\|_{L^\infty}^{m\theta - 2}( 1+ \|n\|_{W^{1,\infty}} )^2(1+\|v\|_{H^3})^2\|f\|_{H^{1,p}_\nu}^2.
\eq
for $0 \leq t \leq T^*$.

We next take the differential operator $\pa_{\xi_j}, j=1,2,3$ to the kinetic equation in \eqref{lin_sys} to get
\[
\pa_t \pa_{\xi_j} f + \xi \cdot \nabla \pa_{\xi_j}f + n^{m\theta} (v- \xi) \cdot \nabla_\xi \pa_{\xi_j} f = -\pa_j f + 4 n^{m\theta}\pa_{\xi_j}f.
\]
Then we estimate the $L^2$-norm of $\pa_{\xi_j}f$ as
$$\begin{aligned}
\frac12\frac{d}{dt}\|\pa_{\xi_j}f\|_{L^{2,p}_\nu}^2 &= -\intrr \nu_p(x,\xi) \pa_{\xi_j} f \cdot \lt( \xi \cdot \nabla \pa_{\xi_j}f + n^{m\theta} (v- \xi) \cdot \nabla_\xi \pa_{\xi_j} f + \pa_j f - 4n^{m\theta}\pa_{\xi_j}f \rt) dxd\xi\cr
& = \frac p2\intrr \nu_{p-2}(x,\xi) (x \cdot \xi) |\pa_{\xi_j} f|^2\,dxd\xi +\frac12  \intrr \nabla_\xi \cdot (\nu_p(x,\xi)  (v - \xi)) n^{m\theta} |\pa_{\xi_j}f|^2\,dxd\xi \cr
&\qquad  - \intrr \nu_p(x,\xi) \pa_{\xi_j}f \pa_j f\,dxd\xi - 4\intrr n^{m\theta} \nu_p(x,\xi)  |\pa_{\xi_j}f|^2\,dxd\xi\cr
& \ls  (\|n\|_{L^\infty}^{m\theta} + 1)(\|v\|_{L^\infty} + 1)\| f\|_{H^{1,p}_\nu}^2 + \intrr \nu_p(x,\xi)n^{m\theta}(v-\xi)\cdot\xi |\pa_{\xi_j}f|^2\,dxd\xi\\
&\ls  (1+\|n\|_{L^\infty}^{m\theta})(1+\|v\|_{L^\infty})^2\| f\|_{H^{1,p}_\nu}^2 
\end{aligned}$$
This, together with \eqref{est_lf} and \eqref{est_hf1}, gives
\bq\label{est_hf2}
\frac{d}{dt}\|f\|_{H^{1,p}_\nu}^2 \ls  C (1+\|n\|_{L^\infty}^{m\theta})(1+ \|n\|_{W^{1,\infty}})^2(1+\|v\|_{H^3})^2\|f\|_{H^{1,p}_\nu}^2.
\eq
for $0 \leq t \leq T^*$. This completes the proof of the $H^{1,p}_\nu$-estimate of $f$.
\end{proof}

%%%%%%%%%%%%%%%%%%%%%%%%%%%%%%%%%%%%%%%%%
%
%
%
%
%%%%%%%%%%%%%%%%%%%%%%%%%%%%%%%%%%%%%%%%%

\subsubsection{Estimates on the fluid velocity $u$}
In this part, we provide the bound estimates for $H^3$ norm of the fluid velocity $u$.

\begin{lemma}\label{lem_u}
Suppose that the initial data \eqref{ini_lin_sys} satisfy \eqref{init_cond}. Furthermore, we assume that the given vector field $v$ satisfies \eqref{mass_3} and $p \ge 2$. Then there exists a unique solution $u$ to the momentum equations in \eqref{lin_sys} such that 
\begin{align*}
&\sup_{0 \le t \le T_3} \|u(t)\|_{H^3}^2 + \int_0^{T_3} 
\lt(\|\sqrt{(n^2(s)+\eta^2)} \nabla^3 u(s)\|_{L^2}^2+  \|\sqrt{(n^2(s)+\eta^2)} \nabla^4 u(s)\|_{L^2}^2\rt)\,ds \le C\epsilon_0^2,\\
& \sup_{0 \le t \le T_3} \|\pa_t \nabla^k u(t)\|_{L^2}^2 \le C\epsilon_0^{2\theta(\max\{m,\gamma\}-1)+2}\epsilon_{k+1}^2, \quad k = 0,1, \quad \mbox{and}\\
&\int_0^{T_3} \|\pa_t \nabla^2 u(s)\|_{L^2}^2\,ds \le C\epsilon_0^3,
\end{align*}
where $T_3 := \min\{T_2, (\epsilon_0^{\theta(\max\{m,\gamma\}-1)+1} \epsilon_3)^{-2}\}$.
\end{lemma}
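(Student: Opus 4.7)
The plan is to derive standard energy estimates at each level $\nabla^k u$ for $k=0,1,2,3$, treat the degenerate Lamé term via integration by parts so that the weighted dissipation $\|\sqrt{n^2+\eta^2}\,\nabla^{k+1} u\|_{L^2}^2$ absorbs borderline commutator terms, and then recover the $\pa_t u$ bounds by solving the momentum equation in \eqref{lin_sys} pointwise for $\pa_t u$. Throughout, Lemma \ref{lem_n} supplies the bounds on $n-n_\infty$ in $\mc([0,T_1];H^3)$ and Lemma \ref{lem_f} supplies the bound on $\|f\|_{H^{2,p}_\nu}$, while the assumption \eqref{mass_3} furnishes the needed control on $v$.

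First, I would test the momentum equation against $u$ to obtain
\[
\tfrac12\tfrac{d}{dt}\|u\|_{L^2}^2 + \alpha\|\sqrt{n^2+\eta^2}\,\nabla u\|_{L^2}^2 + (\alpha+\beta)\|\sqrt{n^2+\eta^2}\,\nabla\cdot u\|_{L^2}^2 = \sfK_1+\sfK_2+\sfK_3+\sfK_4,
\]
where $\sfK_1$ collects the convection $\int v\cdot\nabla u\cdot u\,dx$ (bounded by $\|\nabla v\|_{L^\infty}\|u\|_{L^2}^2$), $\sfK_2$ the pressure $\int u\cdot \nabla(n^{\theta(\gamma-1)})\,dx$ (controlled by $\|n-n_\infty\|_{H^3}\|u\|_{L^2}$ using the assumption $\delta\le (\gamma+1)/2$ so that $\theta(\gamma-1)\ge 2$), $\sfK_3$ the cross term involving $\nabla(n^2)\cdot \ml v$ (controlled by $\|n-n_\infty\|_{H^3}\|v\|_{H^2}\|u\|_{L^2}$), and $\sfK_4$ the drag force. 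For $\sfK_4$, I would split $v-\xi$ and estimate
\[
\Bigl|\int_{\R^3}(v-\xi)f\,d\xi\Bigr| \lesssim (1+|v|)\Bigl(\iint \nu_p(x,\xi) f^2\,d\xi\Bigr)^{1/2}
\]
via Cauchy-Schwarz against the weight $e^{|\xi|^2}(1+|x|^2+|\xi|^2)^{p/2}$, so that Lemma \ref{lem_f} gives the required $L^2_x$ control; the factor $n^{\theta(m-1)}$ causes no problem since $\theta(m-1)\ge 0$.

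Next, I would apply $\pa^k$ to the momentum equation for $k=1,2,3$, test against $\pa^k u$, and integrate by parts on the Lamé term to generate $\alpha\|\sqrt{n^2+\eta^2}\,\nabla \pa^k u\|_{L^2}^2$ plus commutator errors of the form $\int \nabla^j(n^2)\cdot \nabla^{k+2-j}u\cdot \nabla^k u\,dx$ for $1\le j\le k+1$. The interior commutators are absorbed into the dissipation by Cauchy-Schwarz together with the $L^\infty$-bounds on $\nabla^j(n^2)$ from Lemma \ref{lem_n}, and the remaining right-hand side is polynomial in $\|n-n_\infty\|_{H^3}$, $\|v\|_{H^3}$, $\|\psi\nabla^4 v\|_{L^2}$, and $\|u\|_{H^3}$, with a drag contribution bounded through $\|f\|_{H^{2,p}_\nu}$; here the constraint $\delta\le (2m+1)/3$ enters to make $n^{\theta(m-1)}$ controllable in $H^2$. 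Summing over $k$ and applying Grönwall yields $\|u(t)\|_{H^3}^2\le C\epsilon_0^2$ together with the stated time-integrated dissipation on the time interval where the Grönwall factor stays $O(1)$, which forces the choice of $T_3$.

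Finally, for the $\pa_t u$ estimates I would rearrange the momentum equation as
\[
\pa_t u = -v\cdot\nabla u - \tfrac{\gamma}{\gamma-1}\nabla(n^{\theta(\gamma-1)}) - (n^2+\eta^2)Lu - \tfrac{\delta}{\delta-1}\nabla(n^2)\cdot\ml v - n^{\theta(m-1)}\int_{\R^3}(v-\xi)f\,d\xi,
\]
and estimate each summand in $L^2$ and in $\dot{H}^1$ using the just-established $H^3$ bound on $u$ together with Lemmas \ref{lem_n}--\ref{lem_f} and \eqref{mass_3}; for the $L^2(0,T_3;\dot{H}^2)$ bound on $\pa_t u$ the crucial input is the time-integrated weighted bound on $\nabla^4 u$. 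The main obstacle is the top-order ($k=3$) estimate, because the commutator between the degenerate operator $(n^2+\eta^2)L$ and $\nabla^3$ produces a borderline term $\int \nabla(n^2)\cdot \nabla^4 u\cdot \nabla^3 u\,dx$ that must be split so that one factor is absorbed by the weighted dissipation and the other is controlled through $\|\nabla(n^2)\|_{L^\infty}$; the exponent $\theta(\max\{m,\gamma\}-1)+1$ appearing in the definition of $T_3$ is precisely the power of $\epsilon_0$ generated by these top-order pressure and drag nonlinearities.
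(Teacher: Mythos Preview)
Your outline is correct and matches the paper's proof closely: energy estimates on $\pa^k u$ for $k=0,1,2,3$, integration by parts on the degenerate Lam\'e term to generate the weighted dissipation $\|\sqrt{n^2+\eta^2}\,\nabla^{k+1}u\|_{L^2}^2$ that absorbs the borderline commutators, and then $\pa_t u$ read off directly from the equation. The one mechanism you gloss over is that at orders $k\ge1$ the paper first integrates the pressure and drag terms by parts once (so that only $\pa^{k-1}$ lands on $f$ while $\pa^{k+1}$ lands on $u$) and then peels a single factor of $n$ from $n^{\theta(\gamma-1)}$ or $n^{\theta(m-1)}$ to pair with $\pa^{k+1}u$, so that the resulting $\|n\nabla^{k+1}u\|_{L^2}$ is absorbed by the dissipation; this is exactly where the constraints $\theta(\gamma-1)\ge2$ and $\theta(m-1)\ge3$ (i.e.\ $\delta\le(\gamma+1)/2$ and $\delta\le(2m+1)/3$) are actually used, and without this integration by parts the $k=3$ drag term would require $\nabla^3 f$, which is not available from Lemma~\ref{lem_f}.
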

\begin{proof} Here again, we only provide the $H^2$-estimate of $u$, and leave the rest of the proof to Appendix \ref{app_c}. Let us begin with the zeroth order estimate of $u$. A straightforward computation yields
\begin{align*}
\frac12\frac{d}{dt} \|u\|_{L^2}^2 &= - \int_{\R^3} \lt( v\cdot \nabla u + \frac{\gamma}{\gamma-1} \nabla \lt(n^{\theta(\gamma-1)}\rt) + (n^2 + \eta^2) Lu + \frac{\delta}{\delta-1} \nabla(n^2) \ml v\rt) \cdot u\,dx\\
&\quad -\intrr n^{\theta(m-1)}(v-\xi)f \cdot u\,dxd\xi\\
&=:\sum_{i=1}^5 \sfK_i.
\end{align*}
Here, we have
\begin{align*}
\sfK_1 &\ls \|\nabla v\|_{L^\infty} \|u\|_{L^2}^2 \ls \|v\|_{H^3} \|u\|_{L^2}^2 \ls \epsilon_3 \|u\|_{L^2}^2,\\
\sfK_2 &\ls \|n\|_{L^\infty}^{\theta(\gamma-1)-1} \|\nabla n\|_{L^2} \|u\|_{L^2} \ls \epsilon_0^{\theta(\gamma-1)}\|u\|_{L^2},\\
\sfK_4 &\ls \|n\|_{L^\infty} \|\nabla n\|_{L^2} \|\ml v\|_{L^\infty} \|u\|_{L^2}\ls \epsilon_0^2 \epsilon_3 \|u\|_{L^2},\\
\sfK_5 &\ls \|n\|_{L^\infty}^{\theta(m-1)}\|u\|_{L^2} \intrr |u|(|v| + |\xi|)f\,dxd\xi \cr
&\leq  \|n\|_{L^\infty}^{\theta(m-1)}\|u\|_{L^2} \lt(\|v\|_{L^\infty} \lt(\intr \lt(\intr f\,d\xi\rt)^2 dx  \rt)^{1/2}  + \lt(\intr \lt(\intr |\xi| f\,d\xi \rt)^2 dx \rt)^{1/2}\rt)\cr
&\leq C \|n\|_{L^\infty}^{\theta(m-1)}\|u\|_{L^2}(\|v\|_{L^\infty}+1)\|f\|_{L^{2,p}_\nu}  \ls \epsilon_0^{\theta(m-1)+1} \epsilon_2 \|u\|_{L^2}
\end{align*}
where we used, for the estimate of $\sfK_5$,
\[
\intr \lt(\intr |\xi|^k f\,d\xi\rt)^2 dx \le \lt(\intrr \nu_p(x,\xi)f^2\,dxd\xi\rt)\lt(\intr |\xi|^2 e^{-|\xi|^2}d\xi\rt)\le C\intrr \nu_p(x,\xi)f^2\,dxd\xi
\]
for $k=0,1$ due to $p \geq 2$.

For the term $\sfK_3$, we get
\begin{align*}
\sfK_3&= \int_{\R^3} (n^2 + \eta^2) \lt(\alpha \Delta u + (\alpha+\beta)\nabla(\nabla\cdot u)\rt) \cdot u\,dx\\
&= - \alpha\int_{\R^3} (n^2 + \eta^2)|\nabla u|^2\,dx -(\alpha+\beta)\int_{\R^3} (n^2 + \eta^2) (\nabla \cdot u)^2\,dx\\
&\quad - 2\alpha\int_{\R^3} n(\nabla n \cdot \nabla u) \cdot u\,dx -2(\alpha+\beta)\int_{\R^3} n (\nabla n \cdot u)(\nabla \cdot u)\,dx\\
&\le - \alpha\int_{\R^3} (n^2 + \eta^2)|\nabla u|^2\,dx -(\alpha+\beta)\int_{\R^3} (n^2 + \eta^2) (\nabla \cdot u)^2\,dx\\
&\quad +2\alpha \|n\nabla u\|_{L^2}\|\nabla n\|_{L^\infty}\|u\|_{L^2} +2(\alpha+\beta)\|n\nabla\cdot u\|_{L^2}\|\nabla n\|_{L^\infty}\|u\|_{L^2}\\
&\le -\frac\alpha2\int_{\R^3}(n^2 + \eta^2)|\nabla u|^2\,dx -\frac{\alpha+\beta}{2}\int_{\R^3} (n^2 + \eta^2) (\nabla \cdot u)^2\,dx\\
&\quad +(4\alpha+2\beta)\|\nabla n\|_{L^\infty}^2 \|u\|_{L^2}^2,
\end{align*}
where we used $\alpha>0$, $2\alpha + 3\beta \ge 0$ to get $\alpha+\beta \ge 0$.

Then we combine the estimates for $\sfK_i$'s and use Young's inequality to yield
\begin{align*}
\frac{d}{dt}&\|u\|_{L^2}^2 + \alpha\int_{\R^3} (n^2 + \eta^2) |\nabla u|^2\,dx + (\alpha+\beta)\int_{\R^3} (n^2 + \eta^2) (\nabla \cdot u)^2\,dx\\
&\le C(\epsilon_0^2 + \epsilon_3)\|u\|_{L^2}^2 + C(\epsilon_0^{2\theta(\gamma-1)} + \epsilon_0^4 \epsilon_3^2 + \epsilon_0^{2\theta(m-1)+2}\epsilon_3^2)\\
&\le C \epsilon_0^{2\theta(\zeta-1)+2}\epsilon_3^2(1+\|u\|_{L^2}^2),
\end{align*}
where $\zeta := \max\{m, \gamma\}$. Hence, one gets
\begin{align*}
\|u(t)&\|_{L^2}^2 +\int_0^t e^{C\epsilon_0^{2\theta(\zeta-1)+2}\epsilon_3^2 (t-s)} \|\sqrt{n^2(s) + \eta^2}\nabla u(s)\|_{L^2}^2  \,ds \\
&\le C\lt(\|u_0\|_{L^2}^2 e^{C\epsilon_0^{2\theta(\zeta-1)+2}\epsilon_3^2 t} + t \epsilon_0^{2\theta(\zeta-1)+2}\epsilon_3^2 e^{C\epsilon_0^{2\theta(\zeta-1)+2}\epsilon_3^2 t} \rt),
\end{align*}
for $0 \le t \le T_2$. We choose $T_3 := \min\lt\{T_2, (\epsilon_0^{\theta(\zeta-1)+1}\epsilon_3)^{-2}\rt\}$ to have
\[
\|u(t)\|_{L^2}^2 + \int_0^t \|\sqrt{n^2(s) + \eta^2}\nabla u(s)\|_{L^2}^2  \,ds \le C\epsilon_0^2.
\]

We next show the $H^k$-estimate ($k=1,2$) of $u$. We have from \eqref{lin_sys} that 
\begin{align}\label{u_H2}
\begin{aligned}
&\frac12\frac{d}{dt}\|\pa^k u\|_{L^2}^2 \cr
&\quad = -\int_{\R^3} (v \cdot \nabla \pa^k u)\cdot \pa^k u\,dx - \int_{\R^3} \lt( \pa^k (v \cdot \nabla u)  - v \cdot \nabla \pa^k u\rt)\cdot \pa^k u\,dx\\
&\qquad -\frac{\gamma}{\gamma-1} \int_{\R^3} \pa^k \nabla(n^{\theta(\gamma-1)}) \cdot \pa^k u\,dx - \int_{\R^3} (n^2 +\eta^2)\pa^k Lu \cdot \pa^k u\,dx \\
&\qquad - \int_{\R^3} \lt(\pa^k ((n^2 +\eta^2)Lu - (n^2 + \eta^2)\pa^k Lu\rt)\cdot \pa^k u\,dx -\frac{\delta}{\delta-1} \int_{\R^3} \nabla (n^2) \pa^k \ml v \cdot \pa^k u\,dx \\
&\qquad - \int_{\R^3} \lt(\pa^k (\nabla(n^2) \ml v) - \nabla (n^2) \pa^k \ml v \rt)\cdot \pa^k u\,dx +\int_{\R^3 \times \R^3} \pa^{k-1}\lt(n^{\theta(m-1)} (v-\xi)f \rt) \cdot \pa^{k+1} u\,dxd\xi\\
&\quad =: \sum_{i=1}^8 \sfL_i.
\end{aligned}
\end{align}
Here, for $k=1,2$, we easily get
\begin{align*}
\sfL_1 &\ls \|\nabla v\|_{L^\infty}\|\pa^k u\|_{L^2}^2 \ls \epsilon_3 \|\nabla^k u\|_{L^2}^2,\\
\sfL_2 &\ls \sum_{m=1}^k \int_{\R^3} |\pa^m v| | \nabla \pa^{k-m} u| |\pa^k u|\,dx \\
&\ls \|\nabla v\|_{L^\infty}\|\nabla\pa^{k-1} u\|_{L^2}^2 + \|\nabla^2 v\|_{L^3} \|\nabla u\|_{L^6}\|\pa^k u\|_{L^2}\delta_{k,2}\\
&\ls \epsilon_3 \|\nabla^k u\|_{L^2}^2.
\end{align*}
For the term $\sfL_3$, we obtain
\begin{align*}
\sfL_3 &= \frac{\gamma}{\gamma-1}\int_{\R^3} \pa^k (n^{\theta(\gamma-1)}) \pa^k (\nabla \cdot u)\,dx\\
&= \theta\gamma \int_{\R^3} \pa^{k-1}(n^{\theta(\gamma-1)-1} \pa n) \pa^k (\nabla \cdot u)\,dx\\
&\ls \left\{\begin{array}{lc} \|n\|_{L^\infty}^{\theta(\gamma-1)-2} \|\nabla n\|_{L^2} \|n \pa^k (\nabla u)\|_{L^2} & \mbox{if } \ k=1\\
\|n\|_{L^\infty}^{\theta(\gamma-1)-3} \|\nabla n\|_{L^\infty} \|\nabla n\|_{L^2} \|n \pa^k (\nabla u)\|_{L^2} + \|n\|_{L^\infty}^{\theta(\gamma-1)-2} \|\nabla^2 n\|_{L^2} \|n \pa^k (\nabla u)\|_{L^2} & \mbox{if } \ k=2\end{array} \right.\\
&\ls \epsilon_0^{\theta(\gamma-1)-1} \|n \pa^k (\nabla u)\|_{L^2}.
\end{align*}
We next estimate $\sfL_4$ as
\begin{align*}
\sfL_4 &= \alpha \int_{\R^3} (n^2 + \eta^2) \pa^k \Delta u \cdot \pa^k u\,dx  + (\alpha+\beta) \int_{\R^3} (n^2 + \eta^2) \pa^k \nabla (\nabla \cdot u)\cdot \pa^k u\,dx\\
&= -\alpha \int_{\R^3} (n^2 + \eta^2) |\pa^k \nabla u|^2\,dx - (\alpha+\beta) \int_{\R^3} (n^2 + \eta^2) (\pa^k (\nabla \cdot u))^2\,dx\\
&\quad - 2\alpha \int_{\R^3} n (\nabla n \cdot \pa^k \nabla u)\cdot \pa^k u\,dx - 2(\alpha+\beta) \int_{\R^3 } n \pa^k (\nabla \cdot u) \nabla n\cdot \pa^k u\,dx\\
&\le -\alpha \int_{\R^3} (n^2 + \eta^2) |\pa^k \nabla u|^2\,dx - (\alpha+\beta) \int_{\R^3} (n^2 + \eta^2) (\pa^k (\nabla \cdot u))^2\,dx\\
&\quad +2\alpha \|n \pa^k (\nabla u)\|_{L^2} \|\nabla n\|_{L^\infty}\|\pa^k u\|_{L^2} + 2(\alpha+\beta) \|n\pa^k (\nabla \cdot u)\|_{L^2} \|\nabla n\|_{L^\infty}\|\pa^k u\|_{L^2}\\
&\le -\frac{3\alpha}{4}\int_{\R^3} (n^2 + \eta^2) |\pa^k \nabla u|^2\,dx -\frac{3(\alpha+\beta)}{4}\int_{\R^3} (n^2 + \eta^2) (\pa^k (\nabla \cdot u))^2\,dx\\
&\quad + (8\alpha+4\beta) \|\nabla n\|_{L^\infty}^2 \|\pa^k u\|_{L^2}^2.
\end{align*}
Note that the last term on the right hand side can be bounded by $C\epsilon_0^2 \|\nabla^k u\|_{L^2}^2$. For $\sfL_5$, we get
\begin{align*}
\sfL_5 &\ls \sum_{m=1}^k \int_{\R^3} \lt|\pa^m (n^2) \pa^{k-m} Lu \cdot \pa^k u\rt|\,dx\\
&\ls \left\{\begin{array}{lc} \|n\nabla^{2} u\|_{L^2} \|\nabla n\|_{L^\infty}\|\pa u\|_{L^2} & \mbox{if } \ k=1\\
\|n\nabla^{3} u\|_{L^2} \|\nabla n\|_{L^\infty}\|\pa^2 u\|_{L^2} + \|\nabla n\|_{L^\infty}^2 \|\nabla^2 u\|_{L^2}^2 + \|\nabla^2 n\|_{L^3} \|n\nabla^2 u\|_{L^6}\|\nabla^2 u\|_{L^2} & \mbox{if } \ k=2  \end{array} \right.\\
&\ls \left\{\begin{array}{lc} \|n\nabla^{2} u\|_{L^2} \|\nabla n\|_{L^\infty}\|\nabla u\|_{L^2} & \mbox{if } \ k=1\\
\|n\nabla^{3} u\|_{L^2} \|\nabla n\|_{L^\infty}\|\nabla^2 u\|_{L^2} + \|\nabla n\|_{H^2}^2 \|\nabla^2 u\|_{L^2}^2 & \mbox{if } \ k=2  \end{array} \right.\\
&\leq C\epsilon_0 \|\nabla^k u\|_{L^2} \|n \nabla^{k+1}u\|_{L^2} + C\epsilon_0^2\|\nabla^k u\|_{L^2}^2,
\end{align*}
where we used
\[
\|n \nabla^k u\|_{L^6} \ls \|\nabla ( n \nabla^k u)\|_{L^2} \ls \|\nabla n\|_{L^\infty}\|\nabla^k u\|_{L^2} + \|n\nabla^{k+1} u\|_{L^2}.
\]
We treat $\sfL_6$ and $\sfL_7$ as
\[
\sfL_6 \ls \|n\|_{L^\infty}\|\nabla n\|_{L^\infty} \|\pa^k \ml v\|_{L^2} \|\pa^k u\|_{L^2} \ls \epsilon_0^2 \epsilon_3 \|\pa^k u\|_{L^2}
\]
and
\begin{align*}
\sfL_7 &\ls \sum_{m=1}^k \int_{\R^3} \lt| \pa^m (n\nabla n) \pa^{k-m} \ml v  \cdot \pa^k u \rt| dx \\
&\ls \lt\{\begin{array}{lc} \|n\|_{L^\infty}\|\nabla^2 n\|_{L^3} \|\ml v\|_{L^6} \|\pa u\|_{L^2} + \|\nabla n\|_{L^\infty}^2\|\ml v\|_{L^2}\|\pa u\|_{L^2}& \mbox{if } \ k=1\\
\lt((\|n\|_{L^\infty}+\|\nabla n\|_{H^2})^2 \|\ml v\|_{L^\infty}  + \|n\|_{L^\infty}\|\nabla^2 n\|_{L^3}\|\pa\ml v\|_{L^6}  + \|\nabla n\|_{L^\infty}^2 \|\pa \ml v\|_{L^2}\rt)\|\pa^2 u\|_{L^2} & \mbox{if } \ k=2 
\end{array} \rt.\\
&\ls \epsilon_0^2 \epsilon_3 \|\nabla^k u\|_{L^2}.
\end{align*}
For the estimate of $\sfL_8$, we use the similar argument as in the $L^2$ estimate to deduce that for $k=1$,
$$\begin{aligned}
\sfL_8 &= \lt| \intrr n^{\theta(m-1)-1} (v - \xi) f \cdot n \pa^2 u\,dxd\xi\rt|\cr
&\leq  \|n\|_{L^\infty}^{\theta(m-1)-1} \intrr |n \nabla^2 u|(|v| + |\xi|)f\,dxd\xi \cr
&\ls \|n\|_{L^\infty}^{\theta(m-1)-1} \|n\nabla^{2} u\|_{L^2}\lt( \|v\|_{L^\infty}+1\rt) \|f\|_{L^{2,p}_\nu} \cr
& \ls \epsilon_0^{\theta(m-1)} \epsilon_3 \|n \nabla^2 u\|_{L^2},
\end{aligned}$$
and for $k=2$,
\begin{align*}
\sfL_8 &\ls \|n\|_{L^\infty}^{\theta(m-1)-2} \|\nabla n\|_{L^\infty} \intrr |n \nabla^3 u|(|v| + |\xi|)f\,dxd\xi  + \|n\|_{L^\infty}^{\theta(m-1)-1} \|\nabla v\|_{L^\infty}\intrr |n \nabla^3 u| f\,dxd\xi\cr
&\quad + \|n\|_{L^\infty}^{\theta(m-1)-1}  \intrr |n \nabla^3 u|(|v| + |\xi|)|\nabla f|\,dxd\xi\cr
&\ls \|n\|_{L^\infty}^{\theta(m-1)-2} \|\nabla n\|_{L^\infty} (\|v\|_{L^\infty}+1)\|f\|_{L^{2,p}_\nu} \|n\nabla^3 u\|_{L^2} + \|n\|_{L^\infty}^{\theta(m-1)-1} \|\nabla v\|_{L^\infty} \|f\|_{L^{2,p}_\nu} \|n\nabla^3 u\|_{L^2}\\
&\quad + \|n\|_{L^\infty}^{\theta(m-1)-1} \lt( \|v\|_{L^\infty}+1\rt) \|\nabla f\|_{L^{2,p}_\nu} \|n\nabla^3 u\|_{L^2}\\
&\ls \epsilon_0^{\theta(m-1)}\epsilon_3 \|n\nabla^3 u\|_{L^2}.
\end{align*}
Thus, we combine all the estimates for $\sfL_i$'s and use Young's inequality to yield
\begin{align*}
\frac{d}{dt}&\|\nabla^k u\|_{L^2}^2 + \alpha \int_{\R^3} (n^2 + \eta^2) |\nabla^{k+1} u|^2\,dx + (\alpha+\beta) \int_{\R^3} (n^2 + \eta^2) |\nabla^k (\nabla \cdot u)|^2\,dx\\
&\le C\lt(\epsilon_0^{2\theta(\zeta-1)}\epsilon_3^2\|\nabla^k u\|_{L^2}^2 + \epsilon_0^{2\theta(\zeta-1)}\epsilon_3^2\rt),
\end{align*}
for $0 \le t \le T_2$. Here we remind the reader that our assumptions imply $2\theta(\zeta-1) \geq 8$. 

Finally, applying Gr\"onwall's lemma implies
\[
\|\nabla^k u(\cdot,t)\|_{L^2}^2 + \int_0^t \|\sqrt{n^2(\cdot,s)+\eta^2} \nabla^{k+1} u(\cdot,s)\|_{L^2}^2\,ds \le C\epsilon_0^2, \quad 0\le t \le T_3.
\]
From the above, we conclude the desired $H^2$-estimate of $u$. 
\end{proof}

We now have all the ingredients for the proof of Lemma \ref{lem_lsol}. 

\begin{proof}[Proof of Lemma \ref{lem_lsol}]
A simple combinations of Lemmas \ref{lem_n}, \ref{lem_f}, and \eqref{lem_u} yields
\begin{align*}
&1+n_\infty^2 + \|n(\cdot,t)-n_\infty\|_{H^3}^2 + \|u(\cdot,t)\|_{H^3}^2 + \|f(\cdot,\cdot,t)\|_{H^{2,p}_\nu}^2  \le C\epsilon_0^2, \\
& \int_0^t \lt(\|\sqrt{(n^2(\cdot,s)+\eta^2)} \nabla^3 u(\cdot,s)\|_{L^2}^2+  \|\sqrt{(n^2(\cdot,s)+\eta^2)} \nabla^4 u(\cdot,s)\|_{L^2}^2\rt)\,ds \le C\epsilon_0^2,\\
& \|\pa_t \nabla^k n(\cdot,t)\|_{L^2}^2 \leq C\epsilon_1^2\epsilon_{k+1}^2, \quad k=0,1,2,\\
& \|\pa_t \nabla^k u(t)\|_{L^2}^2 \le C\epsilon_0^{2\theta(\zeta-1)+2}\epsilon_{k+1}^2, \quad k = 0,1,  \quad \int_0^t \|\pa_t \nabla^2 u(s)\|_{L^2}^2ds \le C\epsilon_0^3, \quad \mbox{and}\\
&\|\pa_t \nabla^k_{(x,\xi)}f\|_{L^{2,p-2}_\nu}^2 \leq C\epsilon_0^{2m\theta+2} \epsilon_{k+1}^2, \quad k=0,1
\end{align*}
for $0 \le t \le T_3$. We now define the constants $\epsilon_i, i=1,\dots, 4$ as
\[
\epsilon_1 := C^{\frac12}\epsilon_0, \quad \epsilon_2 := C^{\frac12} \epsilon_0^{\theta\zeta+1}\epsilon_1, \quad \epsilon_3 := C^{\frac12} \epsilon_0\epsilon_2, \quad \epsilon_4 := C^{\frac12}\epsilon_0\epsilon_3, \quad T^* := \min\{T, (\epsilon_0^{\theta(\zeta-1)+1}\epsilon_3)^{-2}\}.
\]
Then this asserts \eqref{unif_bd}.
\end{proof}
%%%%%%%%%%%%%%%%%%%%%%%%%%%%%%%%%%%%%%%%%%%%%%%%%%%%%%%%%%%%%%%%%%%%%%%%%%%%%%%%%%%%%%
%
%  \subsection{Passing to the limit}
%
%%%%%%%%%%%%%%%%%%%%%%%%%%%%%%%%%%%%%%%%%%%%%%%%%%%%%%%%%%%%%%%%%%%%%%%%%%%%%%%%%%%%%%

\subsection{Local well-posedness of the linearized system \eqref{lin_sys} without the artificial viscosity}
Since the previous estimates are independent of the choice of $\eta$, by passing to the limit $\eta \to 0$, we can proceed to the proof for the well-posedness of \eqref{lin_sys} without the artificial viscosity, i.e. system \eqref{lin_sys} with $\eta=0$:
\bq\label{lin_sys2}
\begin{aligned}
&\pa_t f + \xi \cdot \nabla f + \nabla_\xi \cdot \lt( n^{m\theta} (v - \xi)f \rt) = 0, \quad (x,\xi,t) \in \R^3 \times \R^3 \times \R_+,\cr
&\pa_t n + v\cdot \nabla n + \theta^{-1}\psi \nabla \cdot v = 0, \quad (x,t) \in \R^3 \times \R_+,\cr
&\pa_t u + v \cdot \nabla u + \frac{\gamma}{\gamma-1}\nabla \lt(n^{\theta(\gamma-1)}\rt) +  n^2 Lu + \frac{\delta}{\delta-1}\nabla \lt(n^2\rt)\cdot \ml v = -n^{\theta(m-1)}\int_{\R^3} (v-\xi)f\,d\xi.
\end{aligned}
\eq

\begin{proof}[Proof of Theorem \ref{thm_lin}]
For the well-posedness, most of the proof follows the arguments in \cite[Lemma 3.5]{LPZ19}. Thus we only present the sketch of the proof.\\

\noindent $\diamond$ (Step A: Existence) First, we choose $T^* := T_3$ in Lemma  \ref{lem_u}. Then, we can obtain the following strong convergence up to a subsequence: for any $R>0$,
\[
(n^\eta, u^\eta) \to (n, u), \quad \mbox{in} \quad \mc([0,T^*], H^2(B_R)) \quad \mbox{as} \quad \eta \to 0.
\]
Here $B_R:= \{x \in \R^3 : |x| \leq R\}$. Moreover, the uniform-in-$\eta$ bounds imply the following weak (star) convergences up to a subsequence:
\begin{align*}
f^\eta &\rightharpoonup f\quad \mbox{weakly *} \quad \mbox{in } \ L^\infty(0,T^*;H^{2,p}_\nu(\R^3)),\\
\pa_t f^\eta &\rightharpoonup \pa_t f\quad \mbox{weakly *} \quad \mbox{in } \ L^\infty(0,T^*;H^{1,p-2}_\nu(\R^3)),\\
(n^\eta, u^\eta) &\rightharpoonup (n,u) \quad \mbox{weakly *} \quad \mbox{in } \ L^\infty(0,T^*;H^3(\R^3)),\\
\pa_t n^\eta &\rightharpoonup \pa_t n \quad \mbox{weakly *} \quad \mbox{in } \ L^\infty(0,T^*;H^1(\R^3)),\\
\pa_t u^\eta & \rightharpoonup \pa_t u  \quad \mbox{weakly} \quad \mbox{in } \ L^2(0,T^*;D^2(\R^3)), \quad \mbox{and}\\
n^\eta \nabla^4 u^\eta & \rightharpoonup n\nabla^4 u  \quad \mbox{weakly} \quad \mbox{in } \ L^2(\R^3 \times (0,T^*)).
\end{align*}
Thus, we can find a limit $(f,n,u)$ satisfying
\[
\begin{aligned}
& f \geq 0, \quad f \in L^\infty(0,T^*; H^{2,p}_\nu(\R^3 \times \R^3)) \quad \pa_t f \in L^\infty(0,T^*; H^{1,p-2}_\nu(\R^3 \times \R^3)), \cr
&  n - n_\infty \in L^\infty(0,T^*;H^3(\R^3)), \quad \pa_t n \in L^\infty(0,T^*;H^2(\R^3)), \cr
& u \in  L^\infty(0,T^*;H^3(\R^3)), \quad \mbox{and} \quad \pa_t u \in L^\infty(0,T^*;H^1(\R^3)) \cap L^2(0,T^*;D^2(\R^3)),
\end{aligned}
\]
Here, it is not difficult to show that $(f,n,u)$ satisfies \eqref{lin_sys2} in the sense of distribution. Furthermore, the arguments \cite[Lemma 3.5]{LPZ19} give us
\[
n - n_\infty \in \mc([0,T^*];H^3(\R^3)), \quad \pa_t n \in \mc([0,T^*];H^2(\R^3)), \quad u \in \mc([0,T^*];H^{s'}(\R^3)) \cap L^\infty(0,T^*;H^3(\R^3)),
\]
and
\[
\pa_t u \in \mc([0,T^*];H^1(\R^3)) \cap L^2(0,T^*;\dot{H}^2(\R^3)).
\]
For the time continuity of $f$ and $\pa_t f$, Sobolev embedding theorem with time and regularities of $(n,u)$ imply
\[
f \in \mc ([0,T^*]; H^{1,p}_\nu(\R^3 \times \R^3)), \quad \pa_t f \in \mc([0,T^*]; L^{2,p-2}_\nu(\R^3 \times \R^3)),
\]
\[
\nabla_{(x,\xi)}^2 f\in \mc_w([0,T^*]; L^{2,p}_\nu(\R^3\times\R^3)), \quad \mbox{and} \quad \pa_t \nabla_{(x,\xi)}f \in \mc_w([0,T^*]; L^{2,p-2}_\nu(\R^3\times\R^3)).
\]
To obtain the desired regularity, we recall the estimates in Lemma \ref{lem_f} and uniform bounds for $(f^\eta, n^\eta, u^\eta)$ to get
\[
\frac{d}{dt}\| f\|_{H^{2,p}_\nu}^2 \le C(1+\|n\|_{L^\infty})^{m\theta} (1+\|\nabla n\|_{H^2})^2(1+\|v\|_{H^3})^2\|f\|_{H^{2,p}_\nu}^2 \le C\| f\|_{H^{2,p}_\nu}^2.
\]
This implies
\[
\| f(\cdot,\cdot,t)\|_{H^{2,p}_\nu}^2 \le \| f_0\|_{H^{2,p}_\nu}^2 e^{Ct},
\]
and gives
\[
\limsup_{t \to 0} \|f(\cdot,\cdot,t)\|_{H^{2,p}_\nu}^2 \le \|f_0\|_{H^{2,p}_\nu}^2.
\]
Thus, $f$ is right continuous at $t=0$ in $H^{2,p}_\nu$ space and from the time reversibility of the kinetic equation in \eqref{lin_sys2}, we finally get
\[
f \in \mc([0,T^*];H^{2,p}_\nu(\R^3 \times \R^3)).
\]
This subsequently implies $\pa_t f \in \mc([0,T^*],H^{1,p-2}_\nu(\R^3 \times \R^3))$, and hence the existence theory is now established.\\

\noindent $\diamond$ (Step B: Uniqueness) Let $(f_1, n_1, u_1)$ and $(f_2, n_2, u_2)$ be two solutions to \eqref{lin_sys2} corresponding to the same initial data. For notational simplicity, we set $\bn := n_1 - n_2$, $\bu := u_1 - u_2$ and ${\bf F} := f_1 - f_2$. Then we have
\begin{align*}
&\pa_t {\bf F} + \xi \cdot \nabla {\bf F} + \nabla_\xi\cdot \lt( (n_1^{m\theta} - n_2^{m\theta})(v-\xi)f_1\rt) + \nabla_{\xi}\cdot (n_2^{m\theta}(v-\xi){\bf F})=0,\\
&\pa_t \bn + v \cdot \nabla \bn = 0,\\
&\pa_t \bu + v \cdot \nabla \bu  + \frac{\gamma}{\gamma-1}\nabla( n_1^{\theta(\gamma-1)} - n_2^{\theta(\gamma-1)}) + (n_1^2 - n_2^2) Lu_1 + n_2^2 L\bu + \nabla( n_1^2 - n_2^2) \ml v\\
&\hspace{3cm} = -(n_1^{\theta(m-1)}-n_2^{\theta(m-1)})\intr (v-\xi)f_1\,d\xi -n_2^{\theta(m-1)} \intr (v-\xi){\bf F}\,d\xi.
\end{align*}
Let us first start with the estimate of ${\bf F}$.
\begin{align*}
&\frac12\frac{d}{dt}\|{\bf F}\|_{L^{2,{p-2}}_\nu}^2 \cr
&\quad =  \frac{p-2}{2}\intrr \nu_{p-4}(x,\xi) x \cdot \xi {\bf F}^2\,dxd\xi\cr
&\qquad + 3 \intrr n_2^{m\theta}  \nu_{p-2}(x,\xi) {\bf F}^2 \,dxd\xi + \frac12 \intrr n_2^{m\theta} \nabla_\xi \cdot (\nu_{p-2}(x,\xi)(v - \xi)){\bf F}^2\,dxd\xi\cr
&\qquad + 3\intrr (n_1^{m\theta}-n_2^{m\theta}) \nu_{p-2}(x,\xi) f_1\,  {\bf F}\,dxd\xi    - \intrr (n_1^{m\theta}- n_2^{m\theta})\nu_{p-2}(x,\xi) (v-\xi)\cdot (\nabla_\xi f_1) \, {\bf F}\,dxd\xi \\
&\quad \ls (1+\|n\|_{L^\infty}^{m\theta})(1+\|v\|_{L^\infty})\|{\bf F}\|_{L^{2,{p-2}}_\nu}^2 + \|n_1^{m\theta}-n_2^{m\theta}\|_{L^6} \lt\|\lt(\intr \nu_{p-2}(x,\xi) f_1^2\,d\xi \rt)^{1/2}\rt\|_{L^3}\|{\bf F}\|_{L^{2,p-2}_\nu} \cr
&\qquad +  \|n_1^{m\theta}-n_2^{m\theta}\|_{L^6}\|v\|_{L^\infty} \lt\|\lt(\intr \nu_{p-2}(x,\xi) | \nabla_\xi f_1|^2\,d\xi \rt)^{1/2}\rt\|_{L^3}\|{\bf F}\|_{L^{2,p-2}_\nu}\\
&\qquad +  \|n_1^{m\theta}-n_2^{m\theta}\|_{L^6} \lt\|\lt(\intr \nu_p(x,\xi) | \nabla_\xi f_1|^2\,d\xi \rt)^{1/2}\rt\|_{L^3}\|{\bf F}\|_{L^{2,p-2}_\nu}\\
&\quad \ls \|{\bf F}\|_{L^{2,p-2}_\nu}^2 + \|\bn\|_{H^1}^2,
\end{align*}
where we used the estimates for \eqref{new_g}.\\

\noindent For $\bn$, direct calculation implies
\[
\frac{d}{dt}\|\bn\|_{H^1}^2 \ls \|v\|_{H^3}\|\bn\|_{H^1}^2 \ls \|\bn\|_{H^1}^2.
\]
For the estimate of $\bu$, we find
\begin{align*}
\frac12\frac{d}{dt}\|\bu\|_{L^2}^2 &= -\intr (v \cdot \nabla \bu)\cdot \bu \,dx -\frac{\gamma}{\gamma-1} \intr \nabla(n_1^{\theta(\gamma-1)} - n_2^{\theta(\gamma-1)}) \cdot \bu\,dx\\
&\quad - \intr (n_1^2 - n_2^2)Lu_1 \cdot \bu\,dx - \intr n_2^2 L\bu \cdot \bu\,dx\\
&\quad - \intr \nabla(n_1^2 - n_2^2) \ml v \cdot \bu\,dx - \intrr (n_1^{\theta(m-1)}-n_2^{\theta(m-1)})(v-\xi)f_1 \cdot \bu\,dxd\xi\\
&\quad -\intrr n_2^{\theta(m-1)}(v-\xi){\bf F} \cdot \bu\,dxd\xi\\
&=: \sum_{i=1}^7 \sfM_i.
\end{align*}
Here $\sfM_i, i\neq 4$ can be easily estimated as
\begin{align*}
\sfM_1 &\ls \|\nabla v\|_{L^\infty}\|\bu\|_{L^2}^2 \ls \|\bu\|_{L^2}^2,\\
\sfM_2 &\ls \|n_1^{\theta(\gamma-1)-1} - n_2^{\theta(\gamma-1)-1}\|_{L^2}\|\nabla n_1\|_{L^\infty}\|\bu\|_{L^2} + \|n_2\|_{L^\infty}^{\theta(\gamma-1)-1} \|\nabla \bn\|_{L^2} \|\bu\|_{L^2}\ls \|\bn\|_{H^1}\|\bu\|_{L^2},\\
\sfM_3 &\ls (\|n_1\|_{L^\infty} + \|n_2\|_{L^\infty}) \|\bn\|_{L^6} \|\nabla^2 u_1\|_{L^3} \|\bu\|_{L^2} \ls \|\bn\|_{H^1}\|\bu\|_{L^2},\\
\sfM_5 &\ls \|\bn\|_{L^2}(\|\nabla n_1\|_{L^\infty}+\|\nabla n_2\|_{L^\infty})\|\ml v\|_{L^\infty}\|\bu\|_{L^2} + \|n_2\|_{L^\infty} \|\nabla \bn\|_{L^2} \|\ml v\|_{L^\infty}\|\bu\|_{L^2}\ls \|\bn\|_{H^1}\|\bu\|_{L^2},\\
\sfM_6 &\ls \|\|n_1^{\theta(\gamma-1)-1} - n_2^{\theta(\gamma-1)-1}\|_{L^6} \lt(\|v\|_{L^\infty}\lt\|\intr f_1\,d\xi \rt\|_{L^3} + \lt\|\intr |\xi| f_1\,d\xi \rt\|_{L^3}\rt) \|\bu \|_{L^2} \ls \|\bn\|_{H^1}\|\bu\|_{L^2},\\
\sfM_7 &\ls \|n_2\|_{L^\infty}^{\theta(m-1)}(\|v\|_{L^\infty}+1) \lt(\|v\|_{L^\infty}\lt\|\intr {\bf F}\,d\xi \rt\|_{L^2} + \lt\|\intr |\xi| {\bf F}\,d\xi \rt\|_{L^2}\rt) \|\bu\|_{L^2} \ls \|{\bf F}\|_{L^{2,p-2}_\nu}\|\bu\|_{L^2}, 
\end{align*}
where we used
\[
|n_1^{\delta} - n_2^{\delta}| \ls (n_1^{\delta-1} + n_2^{\delta-1}) |n_1 - n_2|, \quad \delta>1.
\]
For the term $\sfM_4$, we get
\begin{align*}
\sfM_4&= -\alpha\intr n_2 (\nabla n_2 \cdot \nabla \bu) \cdot \bu\,dx - (\alpha+\beta) \intr n_2 (\nabla n_2 \cdot \bu) (\nabla \cdot \bu)\,dx\\
&\quad - \alpha\intr n_2^2 |\nabla \bu|^2\,dx - (\alpha+\beta) \intr n_2^2 (\nabla \cdot \bu)^2\,dx\\
&\le \alpha \|n_2 \nabla \bu\|_{L^2}\|\nabla n_2\|_{L^\infty}\|\bu\|_{L^2} + (\alpha+\beta) \|n_2 (\nabla \cdot \bu)\|_{L^2}\|\nabla n_2\|_{L^\infty}\|\bu\|_{L^2}\\
&\quad - \alpha\intr n_2^2 |\nabla \bu|^2\,dx - (\alpha+\beta) \intr n_2^2 (\nabla \cdot \bu)^2\,dx\\
&\le - \frac\alpha2\intr n_2^2 |\nabla \bu|^2\,dx - \frac{\alpha+\beta}{2} \intr n_2^2 (\nabla \cdot \bu)^2\,dx+ C\|\bu\|_{L^2}^2.
\end{align*}
Thus, by combining the estimates of $\sfM_i$, we deduce
\[
\frac{d}{dt}\|\bu\|_{L^2}^2 \ls \|u\|_{L^2}^2 + \|\bn \|_{H^1}^2 + \|{\bf F}\|_{L^{2,p-2}_\nu}^2.
\]
We finally gather the estimates for $({\bf F}, \bn, \bu)$ to yield
\[
\frac{d}{dt}\lt( \|{\bf F}\|_{L^{2,p-2}_\nu}^2 + \|\bn\|_{H^1}^2 + \|\bu\|_{L^2}^2\rt) \ls  \|{\bf F}\|_{L^{2,p-2}_\nu}^2 + \|\bn\|_{H^1}^2 + \|\bu\|_{L^2}^2,
\]
and we use Gr\"onwall's lemma to conclude the uniqueness of solutions.
\end{proof}

%%%%%%%%%%%%%%%%%%%%%%%%%%%%%%%%%%%%%%%%%%%%%%%%%%%%%%%%%%%%%%%%%%%%%%%%%%%%%%%%%%%%%%%%%%%%%%%%%%%%%%%%%%%%%%%%%%%%%%%%%%%%%%%%%%%%%%%%%%%%%%
%
%
%   \section{Finite-time blow-up of classical solutions}
%
%
%%%%%%%%%%%%%%%%%%%%%%%%%%%%%%%%%%%%%%%%%%%%%%%%%%%%%%%%%%%%%%%%%%%%%%%%%%%%%%%%%%%%%%%%%%%%%%%%%%%%%%%%%%%%%%%%%%%%%%%%%%%%%%%%%%%%%%%%%%%%%%%%

\section{Local well-posedness of regular solutions}\label{sec:nlin_ext}
In this section, we prove the local well-posedness of regular solutions to system \eqref{main_sys} based on the estimates for the linearized system from the previous section.

%%%%%%%%%%%%%%%%%%%%%%%%%%%%%%%%%%%%%%%%%%%%%%%%%%%%%%%%%%%%%%%%%%%%%%%%%%%%%%%%%%%%%%
%
%  \subsection{Construction of approximate solutions}
%
%%%%%%%%%%%%%%%%%%%%%%%%%%%%%%%%%%%%%%%%%%%%%%%%%%%%%%%%%%%%%%%%%%%%%%%%%%%%%%%%%%%%%%

\subsection{Contruction of approximate solutions}
To show the well-posedness, we first let
\[
2+ n_\infty + \|n_0 - n_\infty\|_{H^3} + \|u_0\|_{H^3} + \|f_0\|_{H^{2,p}_\nu} \le \epsilon_0,
\]
and define $\epsilon_1$, $\epsilon_2$, $\epsilon_3$ and $\epsilon_4$ as in the previous section. Now, we consider a sequence $\{f^k, n^k, u^k\}$ of approximate solutions which satisfies 
\bq\label{cauchy}
\begin{aligned}
&\pa_t f^{k+1} + \xi \cdot \nabla f^{k+1} + \nabla_\xi \cdot \lt( (n^{k+1})^{m\theta} (u^k - \xi)f^{k+1} \rt) = 0, \cr
&\pa_t n^{k+1} + u^k\cdot \nabla n^{k+1} + \theta^{-1}n^k \nabla \cdot u^k = 0,\cr
&\pa_t u^{k+1} + u^k \cdot \nabla u^{k+1} + \frac{\gamma}{\gamma-1}\nabla \lt((n^{k+1})^{\theta(\gamma-1)}\rt) +  (n^{k+1})^2 Lu^{k+1} + \frac{\delta}{\delta-1}\nabla \lt((n^{k+1})^2\rt)\cdot \ml u^k \\
&\hspace{6cm}= -(n^{k+1})^{\theta(m-1)}\int_{\R^3} (u^k-\xi)f^{k+1}\,d\xi,
\end{aligned}
\eq
subject to initial data and far-field behavior:
\[
(f^{k+1}(x,\xi,0), n^{k+1}(x,0), u^{k+1}(x,0)) = (f_0(x,\xi), n_0(x), u_0(x)),
\]
\[
f^{k+1}(x,\xi,t) \to 0, \quad (n^{k+1}, u^{k+1}) \to (n_\infty,0),\quad \mbox{as} \quad |x|, |\xi|\to\infty.
\]
Here, the initial step $(f^0, n^0, u^0)$ satisfies the following system:
\begin{align*}%\label{cauchy_init}
\begin{aligned}
&\pa_t f^0 + \xi \cdot \nabla f^0 + \nabla_\xi \cdot \lt( (n_0)^{m\theta} (u_0 - \xi)f^0 \rt) = 0, \cr
&\pa_t n^0 + u_0\cdot \nabla n^0= 0, \cr
&\pa_t u^0 -n^2 \Delta u^0= 0,
\end{aligned}
\end{align*}
subject to initial data and far-field behavior:
\[
(f^0(x,\xi,0), n^0(x,0), u^0(x,0)) = (f_0(x,\xi), n_0(x), u_0(x)), \quad (x,\xi) \in \R^3 \times \R^3
\]
and
\[
f^0(x,\xi,t) \to 0, \quad (n^0, u^0) \to (n_\infty,0),\quad \mbox{as} \quad |x|, \ |\xi|\to\infty,
\]
respectively. From the estimates in Section \ref{sec:lin_ext}, we can obtain the followings: for any $k\in\N$,  there exists ${\tilde T}\in (0,T^*]$ such that the sequence $\{f^k, n^k, u^k\}$ satisfies 
\[
\begin{aligned}
& f^k\geq 0, \quad f^k \in \mc([0,{\tilde T}]; H^{2,p}_\nu(\R^3 \times \R^3)) \quad \pa_t f^k \in \mc([0,{\tilde T}]; H^{1,p-2}_\nu(\R^3 \times \R^3)), \cr
&  n^k - n_\infty \in \mc([0,{\tilde T}];H^3(\R^3)), \quad \pa_t n^k \in \mc([0,{\tilde T}];H^2(\R^3)), \cr
& u^k \in \mc([0,{\tilde T}];H^{s'}(\R^3)) \cap L^\infty(0,{\tilde T};H^3(\R^3)), \quad \mbox{and} \quad \pa_t u^k \in \mc([0,{\tilde T}];H^1(\R^3)) \cap L^2(0,{\tilde T};\dot{H}^2(\R^3)),
\end{aligned}
\]
 such that for any $k\in\N$,
\begin{align*}%\label{unif_bd_seq}
\begin{aligned}
&\sup_{0 \leq t \leq {\tilde T}}\lt(\|n^k(\cdot,t)-n_\infty\|_{H^3}^2 + \|u^k(\cdot,t)\|_{H^1}^2 + \|f^k(\cdot,\cdot,t)\|_{H^{2,p}_\nu}^2 \rt)\leq \epsilon_1^2,\cr
&\sup_{0 \leq t \leq {\tilde T}}\lt(\|\pa_t n^k(\cdot,t)\|_{L^2}^2 + \|\nabla^2 u^k(\cdot,t)\|_{L^2}^2  + \|\pa_t u^k(\cdot,t)\|_{L^2}^2 + \|\pa_t f^k(\cdot,\cdot,t)\|_{L^{2,p-2}_\nu}^2\rt)\\
&\hspace{4cm}+ \int_0^{{\tilde T}} \lt(\|n(\cdot,t)\nabla^3 u^k(\cdot,t)\|_{L^2}^2 + \|\pa_t\nabla u^k(\cdot,t)\|_{L^2}^2 \rt)dt \leq \epsilon_2^2,\cr
&\sup_{0 \leq t \leq {\tilde T}}\lt(\|\pa_t \nabla n^k(\cdot,t)\|_{L^2}^2 + \|\nabla^3 u^k(\cdot,t)\|_{L^2}^2 + \|\pa_t \nabla u^k(\cdot,t)\|_{L^2}^2 +\|\pa_t \nabla_{(x,\xi)} f^k(\cdot,\cdot,t)\|_{L^{2,p-2}_\nu}^2\rt)\\
&\hspace{4cm}+ \int_0^{{\tilde T}} \lt(\|n(\cdot,t)\nabla^4 u^k(\cdot,t)\|_{L^2}^2 + \|\pa_t \nabla^2 u^k(\cdot,t)\|_{L^2}^2 \rt)dt \leq \epsilon_3^2, \quad \mbox{and}\cr
&\sup_{0 \leq t \leq {\tilde T}} \|\pa_t \nabla^2 n^k(\cdot,t)\|_{L^2}^2 \leq \epsilon_4^2.\cr
\end{aligned}
\end{align*}

%%%%%%%%%%%%%%%%%%%%%%%%%%%%%%%%%%%%%%%%%%%%%%%%%%%%%%%%%%%%%%%%%%%%%%%%%%%%%%%%%%%%%%
%
%  \subsection{Cauchy estimates}
%
%%%%%%%%%%%%%%%%%%%%%%%%%%%%%%%%%%%%%%%%%%%%%%%%%%%%%%%%%%%%%%%%%%%%%%%%%%%%%%%%%%%%%%
\subsection{Cauchy estimates}
For the approximate solutions we constructed, we can show that $\{f^k, n^k, u^k\}$ is a Cauchy sequence in $\mc([0,{\tilde T}];L^{2,p-2}_\nu(\R^3 \times \R^3)) \times \mc([0,{\tilde T}];H^1(\R^3)) \times\mc([0,{\tilde T}];H^1(\R^3))$. We would like to point out that in \cite{LPZ19}, it suffices to show that $\{n^k, u^k\}$ is Cauchy in $\mc([0,{\tilde T}];L^2(\R^3)) \times\mc([0,{\tilde T}];L^2(\R^3))$. However, due to the presence of the drag force in the fluid equations, that is not enough to close the Cauchy estimates. The lemma below clearly shows that we should further estimate the sequence $\{n^k, u^k\}$ in the higher-order Sobolev space to have the required Cauchy estimates. 

\begin{lemma}\label{cauchy_f} Let $\{f^k, n^k, u^k\}$ be the solutions to \eqref{cauchy} on the time interval $[0,{\tilde T}]$. Then, for $t \in [0,{\tilde T}]$, we have
\begin{align*}
\frac{d}{dt}\|f^{k+1}-f^k\|_{L^{2,p-2}_\nu}^2 &\le C(\|f^{k+1}-f^k\|_{L^{2,p-2}_\nu}^2 +\|n^{k+1}-n^k\|_{H^1}^2 + \|u^k - u^{k-1}\|_{H^1}^2),
\end{align*}
where $C>0$ is independent of $k$.
\end{lemma}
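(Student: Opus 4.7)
The plan is to derive a linear transport equation for the difference $F^{k+1}:=f^{k+1}-f^k$ by subtracting the kinetic equations at levels $k$ and $k+1$ in \eqref{cauchy}, and then run the same weighted $L^2$ energy estimate used in the uniqueness proof of Theorem \ref{thm_lin}. After adding and subtracting the quantity $(n^{k+1})^{m\theta}(u^k-\xi)f^k$, a direct computation gives
\[
\pa_t F^{k+1}+\xi\cdot\nabla F^{k+1}+\nabla_\xi\cdot\lt((n^{k+1})^{m\theta}(u^k-\xi)F^{k+1}\rt) = -\mathcal{R}_1 - \mathcal{R}_2,
\]
where
\[
\mathcal{R}_1 := \nabla_\xi\cdot\lt(\lt[(n^{k+1})^{m\theta}-(n^k)^{m\theta}\rt](u^k-\xi)f^k\rt), \qquad \mathcal{R}_2 := \nabla_\xi\cdot\lt((n^k)^{m\theta}(u^k-u^{k-1})f^k\rt).
\]

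Next I would multiply by $\nu_{p-2}(x,\xi)F^{k+1}$ and integrate over $\R^3\times\R^3$. The three terms on the left-hand side are treated exactly as in the uniqueness part of the proof of Theorem \ref{thm_lin}: the free transport term contributes a $\nu_{p-4}$ weighted moment of $F^{k+1}$, while integration by parts in the drift term yields the dissipative contribution $\intrr\nu_{p-2}(x,\xi)|\xi|^2(n^{k+1})^{m\theta}|F^{k+1}|^2\,dxd\xi$ together with lower-order pieces that are controlled by $\|F^{k+1}\|_{L^{2,p-2}_\nu}^2$ using the uniform-in-$k$ bounds on $n^{k+1}$ and $u^k$. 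For the residual $\mathcal{R}_1$, integration by parts transfers $\nabla_\xi$ onto $\nu_{p-2}F^{k+1}$; applying the elementary inequality $|(n^{k+1})^{m\theta}-(n^k)^{m\theta}|\ls((n^{k+1})^{m\theta-1}+(n^k)^{m\theta-1})|n^{k+1}-n^k|$, Cauchy--Schwarz, the embedding $\|n^{k+1}-n^k\|_{L^6}\ls\|n^{k+1}-n^k\|_{H^1}$, and the Gaussian-tail moment bound
\[
\intr\lt(\intr |\xi|^j f^k\,d\xi\rt)^2 dx \ls \|f^k\|_{L^{2,p}_\nu}^2 \quad (j=0,1),
\]
analogous to the one used for \eqref{new_g}, gives a bound of the form $C(\|F^{k+1}\|_{L^{2,p-2}_\nu}^2+\|n^{k+1}-n^k\|_{H^1}^2)$. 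The remainder $\mathcal{R}_2$ is handled in the same fashion, producing the contribution $\|u^k-u^{k-1}\|_{H^1}^2$ via $\|u^k-u^{k-1}\|_{L^6}\ls\|u^k-u^{k-1}\|_{H^1}$.

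The main obstacle is to make sure everything closes in $L^{2,p-2}_\nu$ rather than in $L^{2,p}_\nu$: the factor $|\xi|$ coming from $(u^k-\xi)$ inside the residuals threatens to cost one extra power of the polynomial weight. As in the uniqueness argument, this is resolved by exploiting the exponential weight $e^{|\xi|^2}$ inside $\nu_{p-2}$, so that $\nabla_\xi\nu_{p-2}$ supplies a factor of $\xi$ which, paired with the $|\xi|$ from the drift, matches the $|\xi|^2(n^{k+1})^{m\theta}$ dissipation and is absorbed by Young's inequality. Collecting all contributions yields the claimed differential inequality with a constant $C$ depending only on $\epsilon_0,\ldots,\epsilon_4$ and the uniform bounds from Lemma \ref{lem_lsol}, hence independent of $k$.
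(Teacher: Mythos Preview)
Your overall strategy---subtract the two kinetic equations, run the weighted $L^{2,p-2}_\nu$ energy estimate, and control the residuals via the uniform bounds and $L^6\times L^3\times L^2$ H\"older splits---is exactly the paper's, and your decomposition (with $f^k$ in the residuals and $(n^{k+1})^{m\theta}(u^k-\xi)$ in the transport term) is an equally valid variant of the one the paper chooses.

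There is, however, a genuine gap in your treatment of $\mathcal{R}_1$ and $\mathcal{R}_2$. You propose to integrate by parts so that $\nabla_\xi$ lands on $\nu_{p-2}F^{k+1}$. The product rule then produces a term containing $\nu_{p-2}\,\nabla_\xi F^{k+1}$, and this factor is \emph{not} one of the difference quantities you are trying to close on: $\nabla_\xi F^{k+1}=\nabla_\xi f^{k+1}-\nabla_\xi f^k$ is only bounded uniformly (by the $H^{2,p}_\nu$ estimates), not small. After H\"older you are left with a contribution of size $C\,\|n^{k+1}-n^k\|_{H^1}$ (respectively $C\,\|u^k-u^{k-1}\|_{H^1}$) which is \emph{linear} in the differences; Young's inequality cannot convert this into a purely quadratic right-hand side without introducing a constant term, and the Cauchy estimate fails to close. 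The moment bound you quote, $\int\big(\int |\xi|^j f^k\,d\xi\big)^2 dx\ls\|f^k\|_{L^{2,p}_\nu}^2$, is an $L^2_x$ estimate on $f^k$-moments and plays no role once $\nabla_\xi$ has hit $F^{k+1}$.

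The fix is simple and is what the paper does: do \emph{not} integrate by parts. Expand the divergence directly,
\[
\nabla_\xi\cdot\big(A(x)(u-\xi)f\big)=A(x)\big(-3f+(u-\xi)\cdot\nabla_\xi f\big),
\]
with $f=f^k$ (or $f^{k+1}$). The $\xi$-derivative then falls on the \emph{uniformly bounded} iterate, and the resulting integral factors as $\|n^{k+1}-n^k\|_{L^6}\cdot\big\|\big(\int\nu_p|\nabla_\xi f^k|^2\,d\xi\big)^{1/2}\big\|_{L^3}\cdot\|F^{k+1}\|_{L^{2,p-2}_\nu}$; the middle factor is controlled via the $g$-function argument around \eqref{new_g}, and Young's inequality now yields the desired quadratic bound. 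Your discussion of the exponential weight absorbing the extra $|\xi|$ is correct and is needed here as well, but for the term where $\nabla_\xi$ sits on $f^k$, not on $F^{k+1}$.
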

\begin{proof}
Straightforward computations give
\begin{align*}
\frac12\frac{d}{dt}\|f^{k+1} - f^k\|_{L^{2,p-2}_\nu}^2 &= - \intrr \nu_{p-2}(x,\xi) \nabla \cdot \lt(\xi (f^{k+1} - f^k)\rt) (f^{k+1} - f^k)\,dxd\xi\\
&\quad -\intrr \nu_{p-2}(x,\xi) \nabla_\xi \cdot \lt(((n^{k+1})^{m\theta} - (n^k)^{m\theta})(u^k - \xi)f^{k+1}\rt) (f^{k+1} - f^k)\,dxd\xi\\
&\quad - \intrr \nu_{p-2}(x,\xi) \nabla_\xi \cdot \lt((n^k)^{m\theta}(u^k - u^{k-1}) f^{k+1}\rt) (f^{k+1} - f^k)\,dxd\xi\\
&\quad - \intrr \nu_{p-2}(x,\xi) \nabla_\xi \cdot \lt((n^k)^{m\theta}(u^{k-1}-\xi ) (f^{k+1}-f^k)\rt) (f^{k+1} - f^k)\,dxd\xi\\
&=: \sum_{i=1}^4 \sfI_i,
\end{align*}
where $\sfI_1$ and $\sfI_2$ can be estimated as  
$$\begin{aligned}
\sfI_1 =  \frac{p-2}{2} \intrr \nu_{p-4}(x,\xi) (x \cdot \xi) (f^{k+1} - f^k)^2\,dxd\xi \ls \|f^{k+1} - f^k\|_{L^{2,p-2}_\nu}^2
\end{aligned}$$
and
\begin{align*}
\sfI_2 &= -\intrr \nu_{p-2}(x,\xi) ((n^{k+1})^{m\theta} - (n^k)^{m\theta})  (f^{k+1} - f^k)\lt(-3f^{k+1} + (u^k - \xi) \cdot \nabla_\xi f^{k+1} \rt)\,dxd\xi\\
&\ls \|(n^{k+1})^{m\theta} - (n^k)^{m\theta}\|_{L^6}\lt\|\lt( \intr \nu_{p-2}(x,\xi) (f^{k+1})^2\,d\xi\rt)^{1/2}\rt\|_{L^3} \|f^{k+1} -f^k\|_{L^{2,p-2}_\nu} \\
&\quad+ \|(n^{k+1})^{m\theta} - (n^k)^{m\theta}\|_{L^6}  \|u^k\|_{L^\infty}\lt\|\lt(\intr \nu_{p-2}(x,\xi) |\nabla_\xi f^{k+1}|^2\,d\xi\rt)^{1/2} \rt\|_{L^3}   \|f^{k+1} -f^k\|_{L^{2,p-2}_\nu}\\
&\quad+ \|(n^{k+1})^{m\theta} - (n^k)^{m\theta}\|_{L^6} \lt\|\lt(\intr \nu_p(x,\xi) |\nabla_\xi f^{k+1}|^2\,d\xi\rt)^{1/2} \rt\|_{L^3} \|f^{k+1} -f^k\|_{L^{2,p-2}_\nu}\\
&\ls \|\nabla(n^{k+1} -n^k)\|_{L^2}\|f^{k+1} - f^k\|_{L^{2,p-2}_\nu}.
\end{align*}
Here we used the estimates for \eqref{new_g} and
\bq\label{ineq_n}
|(n^{k+1})^{r} - (n^k)^{r}| \ls ((n^{k+1})^{r-1} + (n^k)^{r-1}) |n^{k+1} - n^k|, \quad r>1.
\eq
We also bound the terms $\sfI_3$ and $\sfI_4$ as
\begin{align*}
\sfI_3 &\ls \|n^k\|_{L^\infty}^{m\theta} \|u^k - u^{k-1}\|_{L^6}\lt\|\lt(\intr \nu_{p-2}(x,\xi) |\nabla_\xi f^{k+1}|^2\,d\xi\rt)^{1/2} \rt\|_{L^3}  \|f^{k+1} -f^k\|_{L^{2,p-2}_\nu}\\
&\ls \|\nabla(u^k - u^{k-1})\|_{L^2}\|f^{k+1} -f^k\|_{L^{2,p-2}_\nu}
\end{align*}
and
\begin{align*}
\sfI_4 &= - \intrr \nu_{p-2}(x,\xi) (n^k)^{m\theta}  (f^{k+1} - f^k)\lt(-3(f^{k+1} - f^k) + (u^k - \xi) \cdot \nabla_\xi (f^{k+1} - f^k) \rt)\,dxd\xi\\
&= 3 \intrr \nu_{p-2}(x,\xi) (n^k)^{m\theta} (f^{k+1} - f^k)^2\,dxd\xi \cr
&\quad + \frac12 \intrr (n^k)^{m\theta} \nabla_\xi \cdot (\nu_{p-2}(x,\xi) (u^k - \xi)) (f^{k+1} - f^k)^2\,dxd\xi\cr
&\ls \|n^k\|_{L^\infty}^{m\theta} (\| u^k\|_{L^\infty} + 1) \|f^{k+1} - f^k\|_{L^{2,p-2}_\nu}.
\end{align*}
Combining the above estimates for $\sfI_i, i=1,\dots,4$ concludes the desired result.
\end{proof}
In the following two lemmas, we provide the $H^1$-estimate of $n^{k+1}-n^k$ and $u^{k+1}-u^k$.
\begin{lemma}\label{cauchy_n}
Let $\{f^k, n^k, u^k\}$ be the solutions to \eqref{cauchy} on the time interval $[0,{\tilde T}]$. Then, for $t \in [0,{\tilde T}]$, we have
\begin{align*}
&\frac{d}{dt}\|n^{k+1}-n^k\|_{H^1}^2 \cr
&\quad \le C\lt( \|n^{k+1}-n^k\|_{H^1}^2 + \|n^k - n^{k-1}\|_{H^1}^2 + \|u^k-u^{k-1}\|_{H^1}^2 \rt) \\
&\qquad +C\lt(\|n^k \nabla(u^k - u^{k-1})\|_{L^2} \|n^{k+1} - n^k\|_{L^2} + \|n^k \nabla^2(u^k - u^{k-1})\|_{L^2} \|\nabla(n^{k+1} - n^k)\|_{L^2}\rt),
\end{align*}
where $C>0$ is independent of $k$.
\end{lemma}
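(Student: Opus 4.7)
The plan is to subtract the continuity equations for $n^{k+1}$ and $n^k$, perform an $L^2$ energy estimate followed by a $\dot H^1$ estimate on the difference, and absorb every commutator-type remainder using the uniform bounds from Section~\ref{sec:lin_ext}. Setting $N := n^{k+1}-n^k$, $M := n^k - n^{k-1}$, and $U := u^k - u^{k-1}$, the crucial algebraic identity is
\[
n^k \nabla\cdot u^k - n^{k-1}\nabla\cdot u^{k-1} \;=\; n^k \nabla\cdot U + M\,\nabla\cdot u^{k-1},
\]
obtained by adding and subtracting $n^k \nabla\cdot u^{k-1}$. This decomposition is essential because it places $n^k$ (rather than $n^{k-1}$) as the weight on $\nabla\cdot U$, matching exactly the quantity $\|n^k\nabla U\|_{L^2}$ in the statement. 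The resulting difference equation reads
\[
\pa_t N + u^k\cdot\nabla N + U\cdot\nabla n^k + \theta^{-1} n^k\nabla\cdot U + \theta^{-1} M\,\nabla\cdot u^{k-1} = 0.
\]

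For the $L^2$ estimate I would multiply by $N$ and integrate. The transport contribution $\int u^k\cdot\nabla N\cdot N$ integrates by parts into $-\tfrac12\int(\nabla\cdot u^k)N^2$, absorbed via the uniform $L^\infty$ bound on $\nabla u^k$ coming from $\|u^k\|_{H^3}\le \epsilon_3$. The terms $\int U\cdot\nabla n^k\cdot N$ and $\int M\,\nabla\cdot u^{k-1}\cdot N$ are handled by H\"older and Young inequalities together with the uniform $L^\infty$ bounds on $\nabla n^k$ and $\nabla u^{k-1}$, while the weighted term $\int n^k\nabla\cdot U\cdot N$ is bounded directly by $C\|n^k\nabla U\|_{L^2}\|N\|_{L^2}$, producing the first weighted term on the right-hand side of the stated inequality.

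Next I would apply $\pa_j$ to the difference equation and test against $\pa_j N$. The commutators generated by $\pa_j(u^k\cdot\nabla N)$, $\pa_j(U\cdot\nabla n^k)$ and $\pa_j(M\,\nabla\cdot u^{k-1})$ all reduce, after integration by parts on the transport part and a combination of H\"older's inequality with the embedding $H^1\hookrightarrow L^6$, to contributions controlled by $C(\|U\|_{H^1}^2 + \|M\|_{H^1}^2 + \|N\|_{H^1}^2)$ thanks to the uniform $H^3$ bounds on $n^{k-1},n^k,u^{k-1},u^k$. For instance, $\int U\cdot\nabla\pa_j n^k\cdot\pa_j N$ is treated via $\|U\|_{L^6}\|\nabla^2 n^k\|_{L^3}\|\nabla N\|_{L^2}$, and $\int M\,\pa_j\nabla\cdot u^{k-1}\cdot\pa_j N$ via $\|M\|_{L^6}\|\nabla^2 u^{k-1}\|_{L^3}\|\nabla N\|_{L^2}$. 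From $\pa_j(n^k\nabla\cdot U)$ the piece $\int \pa_j n^k\,\nabla\cdot U\cdot\pa_j N$ is also absorbed this way, and the only remaining contribution is $\int n^k\,\pa_j\nabla\cdot U\cdot\pa_j N$, bounded by $\|n^k\nabla^2 U\|_{L^2}\|\nabla N\|_{L^2}$.

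The main obstacle is precisely the opening algebraic decomposition: one has to display $n^k$ as the coefficient of $\nabla\cdot U$. If one instead kept the raw difference $n^{k-1}\nabla\cdot U$ and split $n^{k-1} = n^k - M$ a posteriori, the leftover $\|M\nabla^2 U\|_{L^2}\|\nabla N\|_{L^2}$ cannot be closed using only the $H^1$ norm of $M$ and the $L^2$ norm of $\nabla^2 U$ as permitted by the statement, because $\|\nabla^2 U\|_{L^2}$ is not a Cauchy-type smallness on $U$. With the correct decomposition in hand, the two weighted products $\|n^k\nabla U\|_{L^2}\|N\|_{L^2}$ and $\|n^k\nabla^2 U\|_{L^2}\|\nabla N\|_{L^2}$ are left unclosed at this step, and will be combined with the corresponding $H^1$-estimate for $U$ (proved in the subsequent lemma) to close the full Cauchy argument.
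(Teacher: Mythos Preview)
Your proposal is correct and follows essentially the same approach as the paper: the same algebraic decomposition $n^k\nabla\cdot u^k - n^{k-1}\nabla\cdot u^{k-1} = n^k\nabla\cdot U + M\,\nabla\cdot u^{k-1}$, the same $L^2$ and $\dot H^1$ energy estimates, and the same treatment of each commutator term via H\"older, $H^1\hookrightarrow L^6$, and the uniform $H^3$ bounds. The only cosmetic difference is that the paper bounds the $L^2$-level term $\int n^k\nabla\cdot U\cdot N$ by $\|n^k\|_{L^\infty}\|\nabla U\|_{L^2}\|N\|_{L^2}$ and absorbs it into $\|U\|_{H^1}^2+\|N\|_{L^2}^2$, whereas you keep it as the weighted product $\|n^k\nabla U\|_{L^2}\|N\|_{L^2}$ appearing in the statement; both are valid.
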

\begin{proof}
Direct computation yields
\begin{align*}
\frac12&\frac{d}{dt}\|n^{k+1}-n^k\|_{L^2}^2 \\
&= -\intr u^k \cdot \nabla (n^{k+1}- n^k) (n^{k+1}-n^k)\,dx - \intr (u^k - u^{k-1}) \cdot \nabla n^k (n^{k+1}-n^k)\,dx\\
&\quad - \theta^{-1}\intr (n^k - n^{k-1}) (\nabla \cdot u^{k-1})(n^{k+1}-n^k)\,dx  - \theta^{-1} \intr n^k \nabla \cdot (u^k - u^{k-1}) (n^{k+1} - n^k)\,dx\\
&\ls  \|\nabla \cdot u^k\|_{L^\infty}\|n^{k+1}-n^k\|_{L^2}^2 + \|\nabla n^k\|_{L^\infty} \|u^k-u^{k-1}\|_{L^2} \|n^{k+1}-n^k\|_{L^2}\\
&\quad + \|\nabla \cdot u^{k-1}\|_{L^\infty} \|n^k - n^{k-1}\|_{L^2} \|n^{k+1} - n^k\|_{L^2} + \|n^k\|_{L^\infty} \|\nabla(u^k - u^{k-1})\|_{L^2} \|n^{k+1} - n^k\|_{L^2}\\
&\ls \|n^{k+1}-n^k\|_{L^2}^2 + \|n^k - n^{k-1}\|_{L^2}^2 + \|u^k-u^{k-1}\|_{H^1}^2,
\end{align*}
where we used Young's inequality.\\

\noindent For $H^1$-estimate, we get
\begin{align*}
\frac12&\frac{d}{dt}\|\pa(n^{k+1}-n^k)\|_{L^2}^2 \\
&= -\intr u^k \cdot \nabla \pa (n^{k+1}- n^k) \pa(n^{k+1}-n^k)\,dx -\intr \pa u^k \cdot \nabla  (n^{k+1}- n^k) (n^{k+1}-n^k)\,dx\\
&\quad - \intr (u^k - u^{k-1}) \cdot \nabla \pa n^k \pa (n^{k+1}-n^k)\,dx - \intr \pa (u^k - u^{k-1}) \cdot \nabla n^k \pa (n^{k+1}-n^k)\,dx\\
&\quad - \theta^{-1}\intr (n^k - n^{k-1}) (\nabla \cdot \pa u^{k-1})\pa (n^{k+1}-n^k)\,dx - \theta^{-1}\intr \pa (n^k - n^{k-1}) (\nabla \cdot u^{k-1})\pa (n^{k+1}-n^k)\,dx\\
&\quad - \theta^{-1} \intr n^k \nabla \cdot \pa (u^k - u^{k-1}) \pa(n^{k+1} - n^k)\,dx - \theta^{-1} \intr \pa n^k \nabla \cdot (u^k - u^{k-1}) \pa (n^{k+1} - n^k)\,dx\\
&\ls \|\nabla u^k\|_{L^\infty}\|\pa(n^{k+1} -n^k)\|_{L^2}^2 + \|u^k-u^{k-1}\|_{L^6} \|\nabla \pa n^k\|_{L^3}\|\pa(n^{k+1}-n^k)\|_{L^2}\\
&\quad +\|\pa (u^k-u^{k-1})\|_{L^2}\|\nabla n^k\|_{L^\infty}\|\pa(n^{k+1}-n^k)\|_{L^2} + \|n^k-n^{k-1}\|_{L^6}\|\nabla \cdot \pa u^{k-1}\|_{L^3}\|\pa(n^{k+1}-n^k)\|_{L^2}\\
&\quad + \|\pa(n^k-n^{k-1})\|_{L^2}\|\nabla \cdot u^{k-1}\|_{L^\infty}\|\pa(n^{k+1}-n^k)\|_{L^2} + \|n^k \nabla^2(u^k - u^{k-1})\|_{L^2}\|\pa(n^{k+1}-n^k)\|_{L^2}  \\
&\quad + \|\pa n^k \|_{L^\infty}\|\nabla\cdot(u^k - u^{k-1})\|_{L^2}\|\pa(n^{k+1}-n^k)\|_{L^2} \\
&\ls \|\nabla(n^{k+1} - n^k)\|_{L^2}^2 + \|\nabla(n^k - n^{k-1})\|_{L^2}^2 + \|\nabla(u^k - u^{k-1})\|_{L^2}^2 \cr
&\quad + \|n^k\nabla^2(u^k - u^{k-1})\|_{L^2}\|\pa(n^{k+1}-n^k)\|_{L^2}.
\end{align*}
Thus, we combine this with the $L^2$-estimate to complete the proof.
\end{proof}
\begin{lemma}\label{cauchy_u}
Let $\{f^k, n^k, u^k\}$ be the solutions to \eqref{cauchy} on the time interval $[0,{\tilde T}]$. Then, for $t \in [0,{\tilde T}]$, we have
\[
\begin{aligned}
\frac{d}{dt}&\|u^{k+1} - u^k\|_{H^1}^2 + \intr ((n^k)^2 + (n^{k+1})^2)|\nabla^2(u^{k+1} - u^k)|^2\,dx\\
&\le C(\|u^{k+1} - u^k\|_{H^1}^2 + \|u^k-u^{k-1}\|_{H^1}^2 + \|n^{k+1} - n^k\|_{H^1}^2 + \|f^{k+1} -f^k\|_{L^{2,2}_\nu}^2),
\end{aligned}
\]
where $C>0$ is independent of $k$.
\end{lemma}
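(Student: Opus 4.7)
The plan is to derive the equation for $w := u^{k+1} - u^k$ by subtracting the $k$th from the $(k+1)$th momentum equation in \eqref{cauchy}, and then run an $L^2$-then-$H^1$ energy argument adapted to the degenerate dissipation. The key algebraic identity that governs every product difference is
\[
a_1 b_1 - a_2 b_2 = \tfrac12 (a_1 + a_2)(b_1 - b_2) + \tfrac12 (a_1 - a_2)(b_1 + b_2),
\]
applied in particular to the viscosity difference $(n^{k+1})^2 Lu^{k+1} - (n^k)^2 Lu^k$ to produce a leading term $\tfrac12\bigl((n^{k+1})^2 + (n^k)^2\bigr)Lw$. This is precisely what is needed to obtain the symmetric dissipation $\intr \bigl((n^k)^2 + (n^{k+1})^2\bigr)|\nabla^2 w|^2\,dx$ claimed in the lemma.

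For the $L^2$ step I take the inner product with $w$ and integrate by parts. The transport term $u^k\cdot\nabla w$ contributes $\tfrac12\|\nabla\cdot u^k\|_{L^\infty}\|w\|_{L^2}^2$, while the residual transport $(u^k - u^{k-1})\cdot\nabla u^k$ is bounded by $\|u^k - u^{k-1}\|_{L^2}\|\nabla u^k\|_{L^\infty}\|w\|_{L^2}$. The pressure, capillary, and drag differences are controlled using \eqref{ineq_n}, Sobolev embedding, the uniform bounds from the previous section, and the velocity-moment bound on $f^{k+1}$ that requires exactly the $L^{2,2}_\nu$ norm of $f^{k+1}-f^k$, as in the treatment of $\sfK_5$ in Lemma \ref{lem_u}. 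The symmetric split of the viscosity difference yields the good dissipation $\tfrac{\alpha}{2}\intr\bigl((n^{k+1})^2 + (n^k)^2\bigr)|\nabla w|^2\,dx$ plus commutator terms of the form $\intr n\,\nabla n\cdot \nabla w\cdot w\,dx$ absorbed by $\|\nabla n^j\|_{L^\infty}^2\|w\|_{L^2}^2$ via Young's inequality.

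For the $H^1$ step I apply $\pa_i$ to the equation for $w$ and test against $\pa_i w$. The transport, pressure, capillary, and drag differences are handled by the same commutator-plus-difference decomposition, each producing one of the right-hand side quantities in the claimed estimate. The viscosity term, after the symmetric decomposition, splits into
\[
\tfrac12\pa_i\!\Bigl(\bigl((n^{k+1})^2 + (n^k)^2\bigr)Lw\Bigr) + \tfrac12\pa_i\!\Bigl(\bigl((n^{k+1})^2 - (n^k)^2\bigr)(Lu^{k+1} + Lu^k)\Bigr).
\]
The first piece, after integration by parts, yields the leading dissipation $\tfrac{\alpha}{2}\intr\bigl((n^{k+1})^2 + (n^k)^2\bigr)|\nabla^2 w|^2\,dx$ plus a commutator $\intr n\,\nabla n\cdot\nabla^2 w\cdot \nabla w\,dx$ absorbed by Young's inequality together with the uniform bound $\|\nabla n^j\|_{L^\infty}\le C\epsilon_1$.

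The main obstacle is the second piece, which after one $x$-derivative contains $\nabla^3 u^k$ and $\nabla^3 u^{k+1}$; these quantities are only available in $L^2_t L^2_x$ when weighted by $n$, via $\|n^j\nabla^3 u^j\|_{L^2}\le C\epsilon_3$ from Lemma \ref{lem_u}. To accommodate this, I write $(n^{k+1})^2 - (n^k)^2 = (n^{k+1}+n^k)(n^{k+1}-n^k)$ and pair one factor of $n^j$ with $\nabla^3 u^j$ so that the weighted bound applies, while placing the remaining factor $n^{k+1}-n^k$ in $L^2$ or $L^6$ (via $H^1$ Sobolev embedding) against the $L^3$ or $L^\infty$ norms of $\nabla n^j$ coming from the extra derivative. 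Each resulting term is either estimated by $\|n^{k+1}-n^k\|_{H^1}^2$ on the right-hand side, or is a piece of the form $\|n^j\nabla^2 w\|_{L^2}\cdot(\text{small factor})$ absorbed by Young's inequality into the dissipation on the left-hand side. Combining the $L^2$ and $H^1$ estimates then produces the differential inequality in exactly the stated form.
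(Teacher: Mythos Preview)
Your overall architecture---the symmetric decomposition
\[
a_1b_1-a_2b_2=\tfrac12(a_1+a_2)(b_1-b_2)+\tfrac12(a_1-a_2)(b_1+b_2)
\]
applied to the viscous term to generate the dissipation $\intr\bigl((n^k)^2+(n^{k+1})^2\bigr)|\nabla^2 w|^2\,dx$, followed by an $L^2$-then-$H^1$ energy argument---matches the paper, and your $L^2$ step and the treatment of the transport, pressure, capillary and drag differences are all in order.

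The gap is in your handling of the critical cross term at the $H^1$ level, namely
\[
\tfrac12\intr\bigl((n^{k+1})^2-(n^k)^2\bigr)\,\pa_i L(u^{k+1}+u^k)\cdot\pa_i w\,dx.
\]
First, your premise is incorrect: $\nabla^3 u^j$ \emph{is} available in $L^\infty_t L^2_x$ unweighted, since the uniform bounds give $\sup_t\|\nabla^3 u^k\|_{L^2}\le\epsilon_3$; it is only $\nabla^4 u^j$ that requires the weight $n^j$. More importantly, your proposed pairing does not close. If you place $(n^{k+1}+n^k)$ with $\nabla^3 u^j$ and use H\"older, you are forced to put $\nabla w$ into $L^r$ with $r\ge 3$; in three dimensions this requires control of the \emph{unweighted} $\|\nabla^2 w\|_{L^2}$ (via Sobolev or Gagliardo--Nirenberg), which is exactly what you do not have---only the weighted quantity $n^j\nabla^2 w$ sits in the dissipation. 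No choice of H\"older exponents rescues this, so the $\|n^j\nabla^2 w\|_{L^2}$ factor you anticipate never actually appears from your pairing.

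The paper's move is the opposite one: place the factor $(n^{k+1}+n^k)$ with the \emph{test function} $\pa w$,
\[
\Bigl|\tfrac12\intr (n^{k+1}-n^k)(n^{k+1}+n^k)\,L\pa(u^{k+1}+u^k)\cdot\pa w\,dx\Bigr|
\le C\|n^{k+1}-n^k\|_{L^3}\|\nabla^3(u^{k+1}+u^k)\|_{L^2}\|(n^{k+1}+n^k)\pa w\|_{L^6},
\]
and then apply the Sobolev embedding $\dot H^1\hookrightarrow L^6$ to the last factor:
\[
\|(n^{k+1}+n^k)\pa w\|_{L^6}\le C\bigl(\|\nabla(n^{k+1}+n^k)\|_{L^\infty}\|\pa w\|_{L^2}+\|(n^{k+1}+n^k)\nabla^2 w\|_{L^2}\bigr).
\]
Now the weighted second derivative appears naturally and is absorbed into the symmetric dissipation by Young's inequality, while $\|n^{k+1}-n^k\|_{L^3}\le C\|n^{k+1}-n^k\|_{H^1}$ lands on the right-hand side. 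This is the step your sketch is missing.
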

\begin{proof}
We calculate $L^2$- and $H^1$-estimates step by step.\\

\noindent $\bullet$ (Step A: $L^2$-estimates): First, we get
\begin{align*}
\frac12\frac{d}{dt}\|u^{k+1} -u^k\|_{L^2}^2 &= -\intr (u^k \cdot \nabla(u^{k+1} - u^k) )\cdot (u^{k+1} - u^k)\,dx \\
&\quad-\intr ((u^k - u^{k-1}) \cdot \nabla u^k  )\cdot (u^{k+1} -u^k)\,dx\\
&\quad -\frac{\gamma}{\gamma-1}\intr\nabla((n^{k+1})^{\theta(\gamma-1)} - (n^k)^{\theta(\gamma-1)})\cdot (u^{k+1} -u^k)\,dx \\
&\quad- \intr ((n^{k+1})^2 Lu^{k+1} - (n^k)^2 Lu^k)\cdot (u^{k+1} -u^k)\,dx\\
&\quad -\frac{\delta}{\delta-1} \intr (\nabla ((n^{k+1})^2) \ml u^k - \nabla((n^k)^2)\ml u^{k-1})\cdot (u^{k+1} -u^k)\,dx\\
&\quad - \intrr ((n^{k+1})^{\theta(m-1)} - (n^k)^{\theta(m-1)})(u^k - \xi)f^{k+1} \cdot (u^{k+1} -u^k)\, dxd\xi\\
&\quad -\intrr (n^k)^{\theta(m-1)} (u^k -u^{k-1}) f^{k+1} \cdot (u^{k+1} -u^k)\,dxd\xi\\
&\quad - \intrr (n^k)^{\theta(m-1)}(u^{k-1}-\xi)(f^{k+1} -f^k)\cdot (u^{k+1} -u^k)\,dxd\xi\\
&=: \sum_{i=1}^8 \sfJ_i.
\end{align*}
\noindent $\diamond$ (Estimates for $\sfJ_1$ and $\sfJ_2$): Here, we have

\begin{align*}
\sfJ_1 &\ls \|\nabla u^k\|_{L^\infty}\|u^{k+1} -u^k\|_{L^2}^2 \ls \|u^{k+1} -u^k\|_{L^2}^2,\\
\sfJ_2 &\ls \|u^k - u^{k-1}\|_{L^2} \|\nabla u^k\|_{L^\infty}\|u^{k+1} -u^k\|_{L^2} \ls \|u^k - u^{k-1}\|_{L^2}^2 +  \|u^{k+1} -u^k\|_{L^2}^2.\\
\end{align*}
\vspace{0.1cm}

\noindent $\diamond$ (Estimates for $\sfJ_3$): We use \eqref{ineq_n} and $\theta(\gamma-1)\ge 4$ to get

\begin{align*}
\sfJ_3 &= -\theta\gamma \intr ((n^{k+1})^{\theta(\gamma-1)-1} - (n^k)^{\theta(\gamma-1)-1})\nabla n^{k+1} \cdot (u^{k+1} -u^k)\,dx\\
&\quad - \theta\gamma\intr (n^k)^{\theta(\gamma-1)-1} \nabla(n^{k+1}-n^k) \cdot (u^{k+1} -u^k)\,dx\\
&\ls \|n^{k+1} - n^k\|_{H^1}\|u^{k+1}-u^k\|_{L^2} \ls \|n^{k+1} - n^k\|_{H^1}^2+ \|u^{k+1}-u^k\|_{L^2}^2.
\end{align*}

\noindent $\diamond$ (Estimates for $\sfJ_4$): We again use \eqref{ineq_n} to have
\begin{align*}
\sfJ_4&= -\intr ((n^{k+1})^2 - (n^k)^2) Lu^{k+1} \cdot (u^{k+1} -u^k)\,dx\\
&\quad - \intr (n^k)^2 L(u^{k+1} - u^k) \cdot (u^{k+1} -u^k)\,dx\\
&\ls \|n^{k+1} -n^k\|_{L^6}\|Lu^{k+1}\|_{L^3}\|u^{k+1}-u^k\|_{L^2} + \|n^k\|_{L^\infty}\|n^k \nabla^2(u^{k+1} -u^k)\|_{L^2}\|u^{k+1} - u^k\|_{L^2}\\
&\ls\|n^{k+1} -n^k\|_{H^1}\|u^{k+1}-u^k\|_{L^2} +\|n^k \nabla^2(u^{k+1} -u^k)\|_{L^2}\|u^{k+1} - u^k\|_{L^2}.
\end{align*}

\noindent $\diamond$ (Estimates for $\sfJ_5$) We calculate as follows:
\begin{align*}
\sfJ_5 &=-\frac{\delta}{\delta-1}\intr (n^{k+1}-n^k)(\nabla n^{k+1} \cdot \ml u^k) \cdot (u^{k+1}-u^k)\,dx\\
&\quad -\frac{\delta}{\delta-1}\intr n^k( \nabla(n^{k+1} -n^k) \cdot \ml u^k) \cdot (u^{k+1}- u^k)\,dx\\
&\quad -\frac{\delta}{\delta-1}\intr n^k (\nabla n^k \cdot \ml (u^k-u^{k-1}))\cdot(u^{k+1} - u^k)\,dx\\
&\ls \|n^{k+1}-n^k\|_{L^2} \|\nabla n^{k+1}\|_{L^\infty}\|\ml u^k\|_{L^\infty}\|u^{k+1}-u^k\|_{L^2}\\
&\quad + \|n^k\|_{L^\infty}\|\nabla(n^{k+1} - n^k)\|_{L^2} \|\ml u^k\|_{L^\infty}\|u^{k+1} - u^k\|_{L^2}\\
&\quad + \|n^k\|_{L^\infty}\|\nabla n^k\|_{L^\infty}\|\ml(u^k - u^{k-1})\|_{L^2}\|u^{k+1}-u^k\|_{L^2}\\
&\ls \|n^{k+1}-n^k\|_{H^1}^2 + \|u^{k+1}-u^k\|_{L^2}^2 + \|u^k-u^{k-1}\|_{H^1}^2.
\end{align*}

\noindent $\diamond$ (Estimates for $\sfJ_6$-$\sfJ_8$): In this case,
\begin{align*}
\sfJ_6 &\ls \|n^{k+1} - n^k\|_{L^6}\lt(\|u^k\|_{L^\infty}\lt\|\intr f^{k+1}\,d\xi \rt\|_{L^3} + \lt\|\intr |\xi| f^{k+1}\,d\xi \rt\|_{L^3}\rt)\|u^{k+1}-u^k\|_{L^2}\cr
&\ls \|n^{k+1}-n^k\|_{H^1}^2 + \|u^{k+1} - u^k\|_{L^2}^2,\\
\sfJ_7 &\ls \|n^k\|_{L^\infty}^{m\theta} \|u^k - u^{k-1}\|_{L^6} \lt\| \intr f^{k+1}\,d\xi\rt\|_{L^3}\|u^{k+1}-u^k\|_{L^2}\cr
& \ls \|u^k -u^{k-1}\|_{H^1}^2 + \|u^{k+1}-u^k\|_{L^2}^2,\\
\sfJ_8&\ls \|n^k\|_{L^\infty}^{m\theta}\lt(\|u^{k-1}\|_{L^\infty}\lt\|\intr (f^{k+1}- f^k)\,d\xi \rt\|_{L^2} + \lt\|\intr |\xi| (f^{k+1}- f^k)\,d\xi \rt\|_{L^2}\rt)  \|u^{k+1} - u^k\|_{L^2}\cr
&\ls \|f^{k+1}- f^k\|_{L^{2,p-2}_\nu}^2 + \|u^{k+1} - u^k\|_{L^2}^2.
\end{align*}
Then we gather the estimates for $\sfJ_i$'s to yield
\bq\label{l2_u_cauchy}
\begin{aligned}
\frac{d}{dt}\|u^{k+1} - u^k\|_{L^2}^2 &\le C(\|u^{k+1}-u^k\|_{L^2}^2 + \|n^{k+1}-n^k\|_{H^1}^2 + \|u^k - u^{k-1}\|_{H^1}^2 + \|f^{k+1} - f^k\|_{L^{2,p-2}_\nu}^2)\\
&\quad +C\|n^k\nabla^2 (u^{k+1} - u^k)\|_{L^2}\|u^{k+1}-u^k\|_{L^2}.
\end{aligned}
\eq

\noindent $\bullet$ (Step B: $H^1$-estimates): In this case,
\begin{align*}
\frac12\frac{d}{dt}\|\pa(u^{k+1} -u^k)\|_{L^2}^2 &= -\intr \pa(u^k \cdot \nabla(u^{k+1} - u^k) )\cdot \pa(u^{k+1} - u^k)\,dx \\
&\quad-\intr \pa((u^k - u^{k-1}) \cdot \nabla u^k  )\cdot \pa(u^{k+1} -u^k)\,dx\\
&\quad -\frac{\gamma}{\gamma-1}\intr\nabla\pa((n^{k+1})^{\theta(\gamma-1)} - (n^k)^{\theta(\gamma-1)})\cdot \pa(u^{k+1} -u^k)\,dx \\
&\quad- \intr \pa((n^{k+1})^2 Lu^{k+1} - (n^k)^2 Lu^k)\cdot \pa(u^{k+1} -u^k)\,dx\\
&\quad -\frac{\delta}{\delta-1} \intr \pa(\nabla ((n^{k+1})^2) \ml u^k - \nabla((n^k)^2)\ml u^{k-1})\cdot \pa(u^{k+1} -u^k)\,dx\\
&\quad - \intrr \pa(((n^{k+1})^{\theta(m-1)} - (n^k)^{\theta(m-1)})(u^k - \xi)f^{k+1} )\cdot \pa(u^{k+1} -u^k) dxd\xi\\
&\quad -\intrr\pa( (n^k)^{\theta(m-1)} (u^k -u^{k-1}) f^{k+1}) \cdot \pa(u^{k+1} -u^k)\,dxd\xi\\
&\quad - \intrr \pa((n^k)^{\theta(m-1)}(u^{k-1}-\xi)(f^{k+1} -f^k))\cdot \pa(u^{k+1} -u^k)\,dxd\xi\\
&=: \sum_{i=1}^8 \sfK_i.
\end{align*}

\noindent $\diamond$ (Estimates for $\sfK_1$ \& $\sfK_2$): We easily get
\begin{align*}
\sfK_1 &\ls \|\nabla u^k\|_{L^\infty}\|\nabla(u^{k+1} -u^k)\|_{L^2}^2 \ls \|\nabla(u^{k+1} -u^k)\|_{L^2}^2,\\
\sfK_2 &\ls \|u^k - u^{k-1}\|_{L^6}\|\nabla \pa u^k\|_{L^3} \|\pa(u^{k+1} - u^k)\|_{L^2} + \|\pa(u^k-u^{k-1})\|_{L^2} \|\nabla u^k\|_{L^\infty}\|\pa(u^{k+1} - u^k)\|_{L^2} \\
&\ls \|\nabla(u^k-u^{k-1})\|_{L^2}^2 + \|\nabla(u^{k+1} -u^k)\|_{L^2}^2.
\end{align*}

\noindent $\diamond$ (Estimates for $\sfK_3$): Now, we have
\begin{align*}
\sfK_3 &=-\theta\gamma\intr \pa\lt((n^{k+1})^{\theta(\gamma-1)-1}\nabla n^{k+1} - (n^k)^{\theta(\gamma-1)-1} \nabla n^k\rt)\cdot \pa (u^{k+1}-u^k)\,dx\\
&=-\theta\gamma\intr \pa\lt(((n^{k+1})^{\theta(\gamma-1)-1} - (n^k)^{\theta(\gamma-1)-1})\nabla n^{k+1} \rt)\cdot \pa(u^{k+1} - u^k)\,dx\\
&\quad -\theta\gamma \intr\pa\lt((n^k)^{\theta(\gamma-1)-1} \nabla(n^{k+1} - n^k) \rt) \cdot \pa(u^{k+1}-u^k)\,dx\\
&= -\theta\gamma(\theta(\gamma-1)-1)\intr ((n^{k+1})^{\theta(\gamma-1)-2} - (n^k)^{\theta(\gamma-1)-2})\pa n^{k+1} \nabla n^{k+1}\cdot\pa(u^{k+1}-u^k)\,dx\\
&\quad - \theta\gamma(\theta(\gamma-1)-1)\intr (n^k)^{\theta(\gamma-1)-2} \pa(n^{k+1} - n^k) \nabla n^{k+1} \cdot \pa(u^{k+1} - u^k)\,dx\\
&\quad - \theta\gamma \intr \lt(((n^{k+1})^{\theta(\gamma-1)-1} - (n^k)^{\theta(\gamma-1)-1})\nabla \pa n^{k+1} \rt)\cdot \pa(u^{k+1} - u^k)\,dx\\
&\quad + \theta\gamma\intr (n^k)^{\theta(\gamma-1)-1}\nabla(n^{k+1}-n^k)\pa^2(u^{k+1}-u^k)\,dx\\
&\ls \|n^{k+1}-n^k\|_{L^2} \|\nabla n^{k+1}\|_{L^\infty}^2 \|\pa(u^{k+1} - u^k)\|_{L^2}\\
&\quad + \|n^k\|_{L^\infty}^{\theta(\gamma-1)-2} \|\pa(n^{k+1} - n^k)\|_{L^2}\|\nabla n^{k+1}\|_{L^\infty}\|\pa(u^{k+1} -u^k)\|_{L^2}\\
&\quad + \|n^{k+1} -n^k\|_{L^6}\|\nabla^2 n^{k+1}\|_{L^3} \|\pa(u^{k+1} - u^k)\|_{L^2}\\
&\quad +\|n^k\|_{L^\infty}^{\theta(\gamma-1)-2}\|\nabla(n^{k+1} -n^k)\|_{L^2} \|n^k \nabla^2 (u^{k+1} - u^k)\|_{L^2}\\
&\ls \|n^{k+1} - n^k\|_{H^1}^2 + \|\nabla(u^{k+1} - u^k)\|_{L^2}^2 + \|n^{k+1} - n^k\|_{H^1}\|n^k \nabla^2 (u^{k+1} - u^k)\|_{L^2}.
\end{align*}
\noindent $\diamond$ (Estimates for $\sfK_4$): Here, we estimate
\begin{align*}
\sfK_4 &= -2\intr \lt(n^{k+1} \pa n^{k+1} Lu^{k+1} - n^k \pa n^k Lu^k \rt)\cdot \pa(u^{k+1} - u^k)\,dx\\
&\quad - \intr \lt((n^{k+1})^2 L\pa u^{k+1} - (n^k)^2 L\pa u^k \rt)\cdot \pa(u^{k+1} - u^k)\,dx\\
&=: \sfK_4^1 + \sfK_4^2.
\end{align*}
First, $\sfK_4^1$ can be estimated as
\begin{align*}
\sfK_4^1 &= -2\intr (n^{k+1} - n^k) \pa n^{k+1} Lu^{k+1} \cdot \pa(u^{k+1}-u^k)\,dx\\
&\quad -2\intr n^k \pa(n^{k+1} - n^k) Lu^{k+1} \cdot \pa(u^{k+1} -u^k)\,dx\\
&\quad - 2\intr n^k \pa n^k L(u^{k+1} - u^k) \cdot \pa(u^{k+1}-u^k)\,dx\\
&\ls \|n^{k+1} - n^k\|_{L^6} \|\pa n^{k+1}\|_{L^\infty} \|Lu^{k+1}\|_{L^3}\|\pa(u^{k+1}-u^k)\|_{L^2}\\
&\quad + \|\pa(n^{k+1} - n^k)\|_{L^2}\|Lu^{k+1}\|_{L^3}\|n^k \pa(u^{k+1}-u^k)\|_{L^6}\\
&\quad + \|\pa n^k\|_{L^\infty}\|n^k L(u^{k+1}- u^k)\|_{L^2} \|\pa(u^{k+1} - u^k)\|_{L^2}\\
&\ls \|n^{k+1} - n^k\|_{H^1}^2 + \|\pa(u^{k+1}-u^k)\|_{L^2}^2 + \|n^{k+1} - n^k\|_{H^1} \|n^k \nabla^2(u^{k+1} -u^k)\|_{L^2} \\
&\quad + \|\pa(u^{k+1}-u^k)\|_{L^2}\|n^k \nabla^2(u^{k+1} -u^k)\|_{L^2}.
\end{align*}
On the other hand, for $\sfK_4^2$,
\begin{align*}
\sfK_4^2 &= -\frac12\intr ( (n^{k+1})^2 - (n^k)^2) L\pa u^{k+1} \cdot \pa(u^{k+1} - u^k)\,dx - \frac12\intr (n^k)^2 L\pa(u^{k+1} - u^k) \cdot \pa(u^{k+1} - u^k)\,dx\\
&\quad -\frac12\intr ( (n^{k+1})^2 - (n^k)^2) L\pa u^k \cdot \pa(u^{k+1} - u^k)\,dx - \frac12\intr (n^{k+1})^2 L\pa (u^{k+1} - u^k) \cdot \pa(u^{k+1} - u^k)\,dx\\
&=  -\frac12\intr ( n^{k+1}- n^k)(n^{k+1} + n^k) L\pa (u^{k+1}+u^k) \cdot \pa(u^{k+1} - u^k)\,dx\\
&\quad -\alpha \intr n^k (\nabla n^k \cdot \nabla\pa(u^{k+1} - u^k) )\cdot \pa(u^{k+1}-u^k)\,dx \\
&\quad -(\alpha+\beta) \intr n^k (\nabla n^k \cdot  \pa(u^{k+1}-u^k))\nabla\cdot \pa(u^{k+1} - u^k)\,dx\\
&\quad - \frac\alpha2 \intr (n^k)^2 |\nabla \pa(u^{k+1}-u^k)|^2\,dx - \frac{\alpha+\beta}{2}\intr (n^k)^2 (\nabla \cdot \pa(u^{k+1}-u^k))^2\,dx\\
&\quad -\alpha \intr n^{k+1} (\nabla n^{k+1} \cdot \nabla\pa(u^{k+1} - u^k) )\cdot \pa(u^{k+1}-u^k)\,dx \\
&\quad -(\alpha+\beta) \intr n^{k+1} (\nabla n^{k+1} \cdot  \pa(u^{k+1}-u^k))\nabla\cdot \pa(u^{k+1} - u^k)\,dx\\
&\quad - \frac\alpha2 \intr (n^{k+1})^2 |\nabla \pa(u^{k+1}-u^k)|^2\,dx - \frac{\alpha+\beta}{2}\intr (n^{k+1})^2 (\nabla \cdot \pa(u^{k+1}-u^k))^2\,dx\\
&\le \frac12\|n^{k+1}-n^k\|_{L^3}\|L\pa (u^{k+1}+u^k)\|_{L^2} \|(n^{k+1} + n^k) \pa(u^{k+1} - u^k)\|_{L^6}\\
&\quad +\alpha\|\nabla n^k\|_{L^\infty}\|n^k \nabla \pa(u^{k+1}-u^k)\|_{L^2}\|\pa(u^{k+1}-u^k)\|_{L^2} \\
&\quad + (\alpha+\beta)\|\nabla n^k\|_{L^\infty}\|n^k \nabla \cdot \pa(u^{k+1} - u^k)\|_{L^2}\|\pa(u^{k+1}-u^k)\|_{L^2} \\
&\quad - \frac\alpha2 \intr (n^k)^2 |\nabla \pa(u^{k+1}-u^k)|^2\,dx - \frac{\alpha+\beta}{2}\intr (n^k)^2 (\nabla \cdot \pa(u^{k+1}-u^k))^2\,dx\\
&\quad +\alpha\|\nabla n^{k+1}\|_{L^\infty}\|n^{k+1} \nabla \pa(u^{k+1}-u^k)\|_{L^2}\|\pa(u^{k+1}-u^k)\|_{L^2} \\
&\quad+ (\alpha+\beta)\|\nabla n^{k+1}\|_{L^\infty}\|n^k \nabla \cdot \pa(u^{k+1} - u^k)\|_{L^2}\|\pa(u^{k+1}-u^k)\|_{L^2} \\
&\quad - \frac\alpha2 \intr (n^{k+1})^2 |\nabla \pa(u^{k+1}-u^k)|^2\,dx - \frac{\alpha+\beta}{2}\intr (n^{k+1})^2 (\nabla \cdot \pa(u^{k+1}-u^k))^2\,dx\\
&\le C\|n^{k+1}-n^k\|_{H^1}\lt(\|\nabla(u^{k+1}-u^k)\|_{L^2} + \|n^k\nabla^2(u^{k+1}-u^k)\|_{L^2} + \|n^{k+1}\nabla^2 (u^{k+1}-u^k)\|_{L^2}\rt)\\
&\quad -\frac\alpha4 \intr ((n^k)^2 + (n^{k+1})^2) |\nabla \pa(u^{k+1} - u^k)|^2\,dx - \frac{\alpha+\beta}{4}\intr ((n^k)^2 + (n^{k+1})^2) (\nabla \cdot \pa(u^{k+1}-u^k))^2\,dx.
\end{align*}
Thus, we have
\begin{align*}
\sfK_4 &\le -\frac\alpha4 \intr ((n^k)^2 + (n^{k+1})^2) |\nabla \pa(u^{k+1} - u^k)|^2\,dx - \frac{\alpha+\beta}{4}\intr ((n^k)^2 + (n^{k+1})^2) (\nabla \cdot \pa(u^{k+1}-u^k))^2\,dx\\
&\quad + C\lt(\|n^{k+1} - n^k\|_{H^1}^2 + \|u^{k+1} - u^k\|_{H^1}^2 \rt)\\
&\quad + C\lt(\|n^{k+1} - n^k\|_{H^1} + \|u^{k+1} - u^k\|_{H^1} \rt)\|\sqrt{(n^k)^2 + (n^{k+1})^2} \nabla^2(u^{k+1}-u^k)\|_{L^2}.
\end{align*}

\noindent $\diamond$ (Estimates for $\sfK_5$): We obtain
\begin{align*}
\sfK_5 &= -\frac{2\delta}{\delta-1}\intr \pa\lt(n^{k+1} \nabla n^{k+1} \cdot \ml u^k - n^k \nabla n^k \cdot \ml u^{k-1} \rt) \cdot \pa(u^{k+1} - u^k)\,dx\\
&= -\frac{2\delta}{\delta-1}\intr \pa\lt((n^{k+1}-n^k) \nabla n^{k+1} \cdot \ml u^k \rt) \cdot \pa(u^{k+1} - u^k)\,dx\\
&\quad+ \frac{2\delta}{\delta-1}\intr \lt(n^k \nabla(n^{k+1} - n^k) \cdot \ml u^k \rt) \cdot \pa^2(u^{k+1} - u^k)\,dx\\
&\quad+\frac{2\delta}{\delta-1}\intr \lt(n^k \nabla n^k  \ml (u^k -u^{k-1})\rt) \cdot \pa^2(u^{k+1} - u^k)\,dx\\
&\ls \|\pa(n^{k+1}-n^k)\|_{L^2} \|\nabla n^{k+1}\|_{L^\infty}\|\ml u^k\|_{L^\infty}\|\pa(u^{k+1}-u^k)\|_{L^2}\\
&\quad + \|(n^{k+1} - n^k)\|_{L^6}\|\pa(\nabla n^{k+1} \cdot \ml u^k)\|_{L^3} \|\pa(u^{k+1} - u^k)\|_{L^2}\\
&\quad + \|\ml u^k\|_{L^\infty}\|\nabla(n^{k+1} - n^k)\|_{L^2}\|n^k \pa^2(u^{k+1} - u^k)\|_{L^2}^2\\
&\quad + \|\nabla n^k\|_{L^\infty}\|\ml(u^k - u^{k-1})\|_{L^2}\|n^k \pa^2(u^{k+1}-u^k)\|_{L^2}\\
&\ls \|n^{k+1}-n^k\|_{H^1}^2 + \|\pa(u^{k+1}- u^k)\|_{L^2}^2  \cr
&\quad + \lt(\|\nabla(n^{k+1} - n^k)\|_{L^2}+ \|\nabla(u^k - u^{k-1})\|_{L^2}\rt)\|n^k \pa^2(u^{k+1} - u^k)\|_{L^2}^2.
\end{align*}

\noindent $\diamond$ (Estimates for $\sfK_6$): In this case, we use \eqref{ineq_n} to obtain
\begin{align*}
\sfK_6 &= \intrr ((n^{k+1})^{\theta(m-1)}- (n^k)^{\theta(m-1)})(u^k-\xi)f^{k+1} \cdot\pa^2(u^{k+1} - u^k)\,dxd\xi\\
&\ls \intrr  ((n^{k+1})^{\theta(m-1)-1} + (n^k)^{\theta(m-1)-1}) |n^{k+1} - n^k| |u^k -\xi| f^{k+1} |\pa^2 (u^{k+1} - u^k)|\,dx\\
&\ls \|n^{k+1}-n^k\|_{L^6}  \lt(\|u^k\|_{L^\infty}\lt\|\intr f^{k+1}\,d\xi \rt\|_{L^3} + \lt\|\intr |\xi| f^{k+1}\,d\xi \rt\|_{L^3}\rt) \cr
&\hspace{3cm} \times \|((n^{k+1})^{\theta(m-1)-1} + (n^k)^{\theta(m-1)-1}) \nabla^2(u^{k+1}-u^k)\|_{L^2}\\
&\ls \|n^{k+1} - n^k\|_{H^1}\|\sqrt{(n^k)^2 + (n^{k+1})^2}\nabla^2 (u^{k+1}-u^k)\|_{L^2}.
\end{align*}

\noindent $\diamond$ (Estimates for $\sfK_7$ \& $\sfK_8$): Similarly as before, we estimate
\begin{align*}
\sfK_7&= \intrr (n^k)^{\theta(m-1)} (u^k - u^{k-1}) f^{k+1} \pa^2(u^{k+1} -u^k)\,dxd\xi\\
&\ls \|n^k\|_{L^\infty}^{\theta(m-1)-1} \|u^k - u^{k-1}\|_{L^6} \lt\|\intr f^{k+1}\,d\xi\rt\|_{L^3} \|n^k \nabla^2(u^{k+1}- u^k)\|_{L^2}\\
&\ls \|u^k - u^{k-1}\|_{H^1} \|n^k \nabla^2(u^{k+1}- u^k)\|_{L^2}
\end{align*}
and
\begin{align*}
\sfK_8&= \intrr (n^k)^{\theta(m-1)} (u^{k-1}-\xi) (f^{k+1} -f^k) \cdot \pa^2 (u^{k+1}-u^k)\,dxd\xi\\
&\ls \|n^k\|_{L^\infty}^{\theta(m-1)-1}(\|u^{k-1}\|_{L^\infty} + 1)\|f^{k+1}-f^k\|_{L^{2,p-2}_\nu} \|n^k \nabla^2 (u^{k+1}-u^k)\|_{L^2}\\
&\ls \|f^{k+1}-f^k\|_{L^{2,p-2}_\nu} \|n^k \nabla^2 (u^{k+1}-u^k)\|_{L^2}.
\end{align*}
Hence, we gather all the estimates for $J_{3i}$'s and use Young's inequality to yield
\bq\label{h1_u_cauchy}
\begin{aligned}
\frac{d}{dt}&\|\nabla(u^{k+1} - u^k)\|_{L^2}^2 + \intr ((n^k)^2 + (n^{k+1})^2)|\nabla^2(u^{k+1} - u^k)|^2\,dx\\
&\le C(\|u^{k+1} - u^k\|_{H^1}^2 + \|u^k-u^{k-1}\|_{H^1}^2 + \|n^{k+1} - n^k\|_{H^1}^2 + \|f^{k+1} -f^k\|_{L^{2,2}_\nu}^2).
\end{aligned}
\eq
Finally, we combine \eqref{l2_u_cauchy} with \eqref{h1_u_cauchy} to yield the desired estimate.
\end{proof}

%%%%%%%%%%%%%%%%%%%%%%%%%%%%%%%%%%%%%%%%%%%%%%%%%%%%%%%%%%%%%%%%%%%%%%%%%%%%%%%%%%%%%%
%
%  \subsection{Proof of Theorem 1.1}
%
%%%%%%%%%%%%%%%%%%%%%%%%%%%%%%%%%%%%%%%%%%%%%%%%%%%%%%%%%%%%%%%%%%%%%%%%%%%%%%%%%%%%%%

\subsection{Proof of Theorem \ref{thm_ref}}
Now, we are in a position to present the details of the proof for Theorem \ref{thm_ref}. As mentioned before, this, combined with a simple observation, yields the proof of Theorem \ref{T1.1}. We refer to \cite[Section 3.6.1]{LPZ19}.\\

\noindent $\bullet$ (Existence): First, we collect the results from Lemmas \ref{cauchy_n}-\ref{cauchy_u} and use Young's inequality to have
\begin{align*}
\frac{d}{dt}&\lt(\|n^{k+1} - n^k\|_{H^1}^2 + \|u^{k+1}-u^k\|_{H^1}^2 + \|f^{k+1} -f^k\|_{L^{2,2}_\nu}^2 \rt) + \intr ((n^k)^2 + (n^{k+1})^2) |\nabla^2 (u^{k+1} - u^k)|^2\,dx\\
&\le C\lt(\|n^{k+1} - n^k\|_{H^1}^2 + \|u^{k+1}-u^k\|_{H^1}^2 + \|f^{k+1} -f^k\|_{L^{2,2}_\nu}^2 \rt)\cr
&\quad  + \frac12  \intr ((n^{k-1})^2 + (n^k)^2) |\nabla^2 (u^k - u^{k-1})|^2\,dx
\end{align*}
for $t \in [0,{\tilde T}]$. Here, we let
\begin{align*}
\mh^{k+1} (t) &:= \|(n^{k+1} - n^k)(\cdot,t)\|_{H^1}^2 + \|(u^{k+1}-u^k)(\cdot,t)\|_{H^1}^2 + \|(f^{k+1} -f^k)(\cdot,\cdot,t)\|_{L^{2,2}_\nu}^2,\\
\md^{k+1}(t) &:=  \intr ((n^k(x,t))^2 + (n^{k+1}(x,t))^2) |\nabla^2 (u^{k+1} - u^k)(x,t)|^2\,dx.
\end{align*}
Note that $\mh^{k+1}(0)=0$ for any $k\in\N$. Then, the above relation can be rewritten as
\bq\label{cauchy_all}
\frac{d}{dt}\mh^{k+1}(t)  + \md^{k+1}(t) \le C\lt(\mh^k (t) + \mh^{k+1}(t)\rt)+ \frac12  \md^k (t), \quad t \in [0,{\tilde T}].
\eq
We sum \eqref{cauchy_all} over $k$ to yield
\begin{align*}
\frac{d}{dt}\lt(\sum_{r=1}^k \mh^{r+1}(t) \rt)+ \sum_{r=1}^k \md^{r+1}(t) &\le C\sum_{r=1}^k \mh^{r+1}(t) + C\mh^1(t) + \frac12\sum_{r=2}^k  \md^{r}(t) + \frac12\md^1(t),
\end{align*}
and this gives, together with the uniform bound,
\begin{align*}
\frac{d}{dt}\lt(\sum_{r=1}^k \mh^{r+1}(t) \rt)+ \frac12\sum_{r=1}^k \md^{r+1}(t) &\le C\lt( 1+\sum_{r=1}^k \mh^{r+1}(t) \rt).
\end{align*}
Thus, Gr\"onwall's lemma implies
\begin{align*}
\sum_{r=1}^k \mh^{r+1}(t) + \frac12\int_0^t e^{C(t-s)}\sum_{r=1}^k \md^{r+1}(s)\,ds \le (e^{Ct}-1), \quad t \in [0,{\tilde T}].
\end{align*}
Since $C>0$ is independent of $k$, the above relation ultimately implies that the sequence $\{f^k, n^k, u^k\}$ is indeed a Cauchy sequence in $\mc([0,{\tilde T}];L^{2,p-2}_\nu(\R^3 \times \R^3)) \times \mc([0,{\tilde T}];H^1(\R^3)) \times\mc([0,{\tilde T}];H^1(\R^3))$ and converge to $(f, n,u) \in \mc([0,{\tilde T}];L^{2,p-2}_\nu(\R^3 \times \R^3)) \times \mc([0,{\tilde T}];H^1(\R^3)) \times\mc([0,{\tilde T}];H^1(\R^3))$ in s strong sense. Moreover, the uniform-in-$k$ bounds and uniqueness of weak limit imply, up to a subsequence,
\[
\begin{aligned}
f^k &\rightharpoonup f\quad \mbox{weakly *} \quad \mbox{in } \ L^\infty(0,T^*;H^{2,p}_\nu(\R^3)),\\
\pa_t f^k &\rightharpoonup \pa_t f\quad \mbox{weakly *} \quad \mbox{in } \ L^\infty(0,T^*;H^{1,p-2}_\nu(\R^3)),\\
(n^k, u^k) &\rightharpoonup (n,u) \quad \mbox{weakly *} \quad \mbox{in } \ L^\infty(0,T^*;H^3(\R^3)),\\
\pa_t n^k &\rightharpoonup \pa_t n \quad \mbox{weakly *} \quad \mbox{in } \ L^\infty(0,T^*;H^1(\R^3)),\\
\pa_t u^k& \rightharpoonup \pa_t u  \quad \mbox{weakly} \quad \mbox{in } \ L^2(0,T^*;D^2(\R^3)), \quad \mbox{and}\\
n^k \nabla^4 u^k & \rightharpoonup n\nabla^4 u  \quad \mbox{weakly} \quad \mbox{in } \ L^2( \R^3 \times (0,T^*)).
\end{aligned}
\]
Thus, we can find a limit $(f,n,u)$ satisfying
\[
\begin{aligned}
& f \geq 0, \quad f \in L^\infty(0,T^*; H^{2,p}_\nu(\R^3 \times \R^3)) \quad \pa_t f \in L^\infty(0,T^*; H^{1,p-2}_\nu(\R^3 \times \R^3)), \cr
&  n - n_\infty \in L^\infty(0,T^*;H^3(\R^3)), \quad \pa_t n \in L^\infty(0,T^*;H^2(\R^3)), \cr
& u \in  L^\infty(0,T^*;H^3(\R^3)), \quad \mbox{and}\quad  \pa_t u \in L^\infty(0,T^*;H^1(\R^3)) \cap L^2(0,T^*;D^2(\R^3)),
\end{aligned}
\]
For the time continuity, we refer to Section \ref{sec:lin_ext} and \cite[Lemma 3.5]{LPZ19}.\\

\noindent $\bullet$ (Uniqueness): Suppose that we have two regular solutions $(f_1, n_1, u_1)$ and $(f_2, n_2, u_2)$ corresponding to the same initial data $(f_0,n_0,u_0)$. Then, we can use the arguments in Lemmas \ref{cauchy_n} and \ref{cauchy_u} to yield
\begin{align*}
\frac{d}{dt}&\lt(\|n_1 -n_2\|_{H^1}^2 + \|u_1 - u_2\|_{H^1}^2 + \|f_1 - f_2\|_{L^{2,2}_\nu}^2 \rt) + \intr (n_1^2 + n_2^2)|\nabla^2(u_1 - u_2)|^2\,dx \\
&\le C\lt(\|n_1 -n_2\|_{H^1}^2 + \|u_1 - u_2\|_{H^1}^2 + \|f_1 - f_2\|_{L^{2,2}_\nu}^2 \rt),
\end{align*}
and we use Gr\"onwall's lemma to get the desired result.

%%%%%%%%%%%%%%%%%%%%%%%%%%%%%%%%%%%%%%%%%%%%%%%%%%%%%%%%%%%%%%%%%%%%%%%%%%%%%%%%%%%%%%%%%%%%%%%%%%%%%%%%%%%%%%%%%%%%%%%%%%%%%%%%%%%%%%%%%%%%%%
%
%
%   \section{Finite-time blow-up of classical solutions}
%
%
%%%%%%%%%%%%%%%%%%%%%%%%%%%%%%%%%%%%%%%%%%%%%%%%%%%%%%%%%%%%%%%%%%%%%%%%%%%%%%%%%%%%%%%%%%%%%%%%%%%%%%%%%%%%%%%%%%%%%%%%%%%%%%%%%%%%%%%%%%%%%%%%

\section{Finite-time singularity formation for classical solutions}\label{sec:blow}

In this section, we study the finite-time singularity formation for classical solutions to the system \eqref{main_sys} and prove Theorem \ref{main_thm2}. 
%{\color{blue}
%Motived from \cite{C17}, see also \cite{LPZ19,Xin88,XY13}, we first introduce several physical quantities:\newline
%
%\noindent $\bullet$ Mass.-
%\[
%m_\rho (t) := \intr \rho\,dx, \quad m_f(t) := \intrr f\,dxd\xi.
%\]
%$\bullet$ Momentum.-
%\[
%M(t):=  \intr \rho u\,dx + \intrr f \xi \,dxd\xi =: M_\rho(t) + M_f(t). 
%\]
%$\bullet$ Momentum weight.-
%\[
%W(t):= \intr \rho u \cdot x\,dx + \intrr (x\cdot \xi) f\,dxd\xi =: W_\rho(t) + W_f(t).
%\]
%$\bullet$ Momentum of inertia.-
%\[
%I(t) := \frac12 \intr \rho|x|^2\,dx + \frac12 \intrr f|x|^2\,dxd\xi =: I_\rho(t) + I_f(t).
%\]
%$\bullet$ Total energy.-
%\[
%E(t):= \frac12 \intr \rho|u|^2\,dx + \frac{1}{\gamma - 1}\intr \rho^\gamma\,dx + \frac12 \intrr f|\xi|^2\,dxd\xi=: E_k(t) + E_i(t) + E_f(t).
%\]
%}
We first start with some energy estimates of solutions to the system \eqref{main_sys}.
\begin{lemma}\label{lem_energy}Let $(f,\rho,u)$ be a classical solution to the system \eqref{main_sys}  in the interval $[0,T]$. 
Then we have
$$\begin{aligned}
&(i)  \,\,\,\,\frac{d}{dt} m_\rho(t) = \frac{d}{dt} m_f(t) = 0, \quad  \frac{d}{dt} M(t)= 0,\cr
&(ii) \,\, \frac{d}{dt}E(t) + \intr \lt(2\mu(\rho) \mathbb{T}(u) : \mathbb{T}(u) + \lambda(\rho)|\nabla \cdot u|^2\rt)dx + \intrr \rho^m|u-\xi|^2 f\,dxd\xi = 0
\end{aligned}$$
for all $t \in [0,T]$. Here $\mathbb{A}:\mathbb{B} = \sum_{i,j=1}^3 a_{ij} b_{ij}$ for $\mathbb{A} = (a_{ij}), \mathbb{B} = (b_{ij}) \in \R^{3 \times 3}$. 
\end{lemma}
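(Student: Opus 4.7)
The plan is to prove the three conservation/dissipation identities by testing each equation in \eqref{main_sys} against a suitable multiplier, integrating over the appropriate space, and exploiting the decay at infinity afforded by the regularity class in Definition \ref{def_sol} to discard boundary terms in every integration by parts.

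For $(i)$, I would integrate the continuity equation over $\R^3$ to get $\frac{d}{dt}m_\rho=-\int_{\R^3}\nabla\cdot(\rho u)\,dx=0$, and integrate the Vlasov equation over $\R^3\times\R^3$ to kill both the transport and the force terms, yielding $\frac{d}{dt}m_f=0$. For the total momentum, multiplying the Vlasov equation by $\xi$ and using integration by parts in $\xi$ on the $\nabla_\xi\cdot(Df)$ term produces $\frac{d}{dt}M_f=\iint D f\,dxd\xi$, while integrating the fluid momentum equation in $x$ produces $\frac{d}{dt}M_\rho=-\iint D f\,dxd\xi$, since $\nabla_x\cdot(\rho u\otimes u)$, $\nabla_x p(\rho)$ and $\nabla_x\cdot\mathbb{S}(\nabla_x u)$ are all pure divergences. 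Adding gives $\frac{d}{dt}M=0$.

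For $(ii)$, the energy identity splits into three computations that are then combined. First, multiplying the Navier--Stokes momentum equation by $u$, using the continuity equation to handle $\partial_t\rho|u|^2$ and recognizing $\frac12 u\cdot\nabla|u|^2+\frac12|u|^2\nabla\cdot(\rho u)$ as a divergence, I obtain
\[
\frac{d}{dt}E_k = \int_{\R^3} p(\rho)\,\nabla\cdot u\,dx + \int_{\R^3} u\cdot\nabla\cdot\mathbb{S}(\nabla_x u)\,dx - \iint_{\R^3\times\R^3}\rho^m(u-\xi)\cdot u\,f\,dxd\xi.
\]
Second, from $\partial_t\rho^\gamma+\nabla\cdot(\rho^\gamma u)+(\gamma-1)\rho^\gamma\nabla\cdot u=0$, which is a direct consequence of the continuity equation, integration gives $\frac{d}{dt}E_i=-\int p(\rho)\nabla\cdot u\,dx$, so this cancels the pressure contribution to $\frac{d}{dt}E_k$. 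Third, multiplying the Vlasov equation by $|\xi|^2/2$ and integrating by parts in $\xi$ yields $\frac{d}{dt}E_f=\iint\rho^m(u-\xi)\cdot\xi\,f\,dxd\xi$.

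Adding these three, the viscous term produces, after one integration by parts and using the symmetry of $\mathbb{T}(u)$,
\[
\int_{\R^3} u\cdot\nabla\cdot\mathbb{S}(\nabla_x u)\,dx = -\int_{\R^3}\bigl(2\mu(\rho)\mathbb{T}(u){:}\mathbb{T}(u)+\lambda(\rho)|\nabla\cdot u|^2\bigr)\,dx,
\]
and the two drag contributions combine into $\iint\rho^m(u-\xi)\cdot(\xi-u)f\,dxd\xi=-\iint\rho^m|u-\xi|^2 f\,dxd\xi$, giving exactly the claimed identity. The only nontrivial step is justifying that all boundary/far-field terms in the integrations by parts genuinely vanish; this is not an obstacle here because the regularity and decay encoded in the weight $\nu_p$ and in the $H^3$ framework of Definition \ref{def_sol} provide more than enough integrability for $f|\xi|^3$, $\rho u\otimes u$, $\rho^\gamma$ and $\mathbb{S}(\nabla_x u)\cdot u$ at spatial and velocity infinity, so a standard cut-off argument closes the estimate.
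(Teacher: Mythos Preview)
Your proof is correct and follows exactly the standard multiplier/integration-by-parts approach; the paper does not spell out a proof of this lemma (these identities are treated as routine, with closely related computations deferred to \cite{C17}), but your argument is precisely what is intended. Each step---mass and momentum conservation from integrating the equations, the energy identity from testing against $u$, $|\xi|^2/2$, and using the continuity equation for $E_i$, with the viscous contraction $\mathbb{S}(\nabla u):\nabla u = 2\mu(\rho)\mathbb{T}(u){:}\mathbb{T}(u)+\lambda(\rho)|\nabla\cdot u|^2$ and the drag cancellation---is right, and the decay justification you sketch is adequate in the regularity class of Definition~\ref{def_sol}.
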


In the lemma below, we next estimate the total momentum of inertia and some bound estimates on the momentum and internal energy. We also provide the growth rate of the total momentum of inertia in time. 
\begin{lemma}\label{lem_useful}Let $(f,\rho,u)$ be a classical solution to the system \eqref{main_sys} in the interval $[0,T]$. Then for $t \in [0,T]$, the followings hold:
\begin{itemize}
\item[(i)] The momentum of inertia satisfy
\[
\frac{d}{dt}I_\rho(t) = W_\rho(t), \quad \frac{d}{dt}I_f(t) = W_f(t), \quad \mbox{i.e.,} \quad \frac{d}{dt} I(t) = W(t).
\]
\item[(ii)] The momentum can be bounded from above as
\[
M^2 \leq 4\max\{m_\rho, m_f\}\lt(E_k(t) + E_f(t)\rt).
\]
\item[(iii)] The internal energy can be bounded from below as 
\[
E_i(t) \geq \frac{C_0}{I_\rho(t)^{\frac{3(\gamma-1)}{2}}}.
\]
where $C_0 > 0$ is given by
\[
C_0 := \lt(\frac{\pi^{3/2}}{\Gamma\lt(5/2 \rt)}\rt)^{1 - \gamma}\frac{(m_\rho(0))^{\frac{5\gamma - 3}{2}}}{2^{\frac{5\gamma - 3}{2}}(\gamma-1)}.
\]

\item[(iv)] The total momentum of inertia is bounded by a quadratic function of $t$:
\[
I(t) \leq I(0) + C_1t + C_2t^2,
\]
where $C_1$ and $C_2$ are positive constants given by
\[
C_1:= W(0)  + \frac{(2\alpha + 3\beta)}{\delta-1} (m_\rho(0))^{\frac{\gamma-\delta}{\gamma-1}}(\gamma-1)^{\frac{\delta-1}{\gamma-1}}E(0)^{\frac{\delta-1}{\gamma-1}}
\]
and
\[
C_2 :=\frac12\lt( \max\{2,3(\gamma - 1)\}E(0)\rt),
\]
respectively.
\end{itemize}
\end{lemma}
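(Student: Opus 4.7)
For (i), I would differentiate $I_\rho$ and $I_f$ in time, substitute the continuity and Vlasov equations respectively, and integrate by parts; for $I_f$ the $\nabla_\xi$-divergence contribution vanishes because the weight $|x|^2$ is $\xi$-independent, and the free-streaming term yields $W_f$ after an integration by parts in $x$. Part (ii) follows from Cauchy--Schwarz applied separately to $M_\rho$ and $M_f$, giving $|M_\rho|^2\leq 2m_\rho E_k$ and $|M_f|^2\leq 2m_f E_f$, and then combined via $|M|^2\leq 2(|M_\rho|^2+|M_f|^2)$.

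For (iii), I would use the classical Xin-type interpolation: for any $R>0$, split $m_\rho(0) = \int_{|x|<R}\rho\,dx + \int_{|x|\geq R}\rho\,dx$, bound the first piece by H\"older's inequality (with exponents $\gamma$ and $\gamma/(\gamma-1)$) by $((\gamma-1)E_i)^{1/\gamma}|B_R|^{(\gamma-1)/\gamma}$ and the second by Chebyshev's inequality by $2I_\rho/R^2$, and then optimize in $R>0$. The optimizer will produce $E_i\geq C_0/I_\rho^{3(\gamma-1)/2}$, with the constant $C_0$ absorbing the unit-ball volume $|B_1|=\pi^{3/2}/\Gamma(5/2)=4\pi/3$.

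For (iv), by (i) it suffices to bound $W(t)$ affinely in $t$. Substituting the momentum equation into $W_\rho'(t)$ and the Vlasov equation into $W_f'(t)$ and integrating by parts, the drag terms will cancel between the two equations, leaving
\begin{equation*}
W'(t) = 2E_k + 2E_f + 3(\gamma-1)E_i - (2\alpha+3\beta)\intr \rho^\delta(\nabla\cdot u)\,dx,
\end{equation*}
where the viscous contribution arises from $\intr \mathrm{tr}(\mbs(\nabla u))\,dx$ after integrating $\nabla_x\cdot \mbs(\nabla u)$ against $x$. The first three terms will be controlled by $\max\{2,3(\gamma-1)\}E(0)=2C_2$ using energy monotonicity. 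For the viscous term I would apply the continuity equation to $\rho^\delta$ to obtain the key identity
\begin{equation*}
\rho^\delta(\nabla\cdot u) = \frac{1}{1-\delta}\lt[\pa_t\rho^\delta + \nabla\cdot(\rho^\delta u)\rt],
\end{equation*}
so that $\int_0^t\!\!\intr \rho^\delta(\nabla\cdot u)\,dx\,ds = \frac{1}{1-\delta}(\intr \rho^\delta(t)\,dx - \intr \rho_0^\delta\,dx)$. Dropping the nonnegative $\rho^\delta(t)$ piece (whose sign is favorable since $(2\alpha+3\beta)/(\delta-1)\geq 0$) and applying the H\"older interpolation $\intr\rho_0^\delta\,dx\leq m_\rho(0)^{(\gamma-\delta)/(\gamma-1)}((\gamma-1)E(0))^{(\delta-1)/(\gamma-1)}$ will produce the stated $C_1$. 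Integrating $W(s)\leq C_1+2C_2 s$ from $0$ to $t$ then gives $I(t)\leq I(0)+C_1 t+C_2 t^2$.

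The main obstacle will be handling the viscous term in (iv): a direct Cauchy--Schwarz bound against the BD-type dissipation $\int_0^t\!\!\intr\rho^\delta|\nabla\cdot u|^2\,dx\,ds \lesssim E(0)/(2\alpha+3\beta)$ only yields $\sqrt{t}$-growth with loose constants and cannot close the $t^2$-order estimate on $I$; it is the identification of $\rho^\delta(\nabla\cdot u)$ as an exact space-time divergence via the continuity equation that is expected to produce the clean linear-in-$t$ bound on $W$ together with the explicit $C_1$.
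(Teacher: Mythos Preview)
Your outline for parts (i)--(iii) matches the standard arguments that the paper cites from \cite{C17}, and your derivation of the identity
\[
W'(t) = 2E_k + 2E_f + 3(\gamma-1)E_i - (2\alpha+3\beta)\intr \rho^\delta(\nabla\cdot u)\,dx
\]
together with the key observation that $\intr \rho^\delta(\nabla\cdot u)\,dx = \frac{1}{1-\delta}\frac{d}{dt}\intr \rho^\delta\,dx$ is exactly the paper's approach to (iv).

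However, you have the signs reversed in the final step. After integrating in time, the viscous contribution to $W(t)-W(0)$ is
\[
\frac{2\alpha+3\beta}{\delta-1}\lt(\intr \rho^\delta(t)\,dx - \intr \rho_0^\delta\,dx\rt),
\]
and since $(2\alpha+3\beta)/(\delta-1)\ge 0$, the $\rho^\delta(t)$ piece is \emph{nonnegative}, hence \emph{unfavorable} for an upper bound; it cannot be dropped. What the paper does (and what you must do) is the opposite: discard the nonpositive term $-\frac{2\alpha+3\beta}{\delta-1}\intr\rho_0^\delta\,dx$ and bound $\intr \rho^\delta(t)\,dx$ at time $t$ by the H\"older interpolation between $L^1$ and $L^\gamma$, using mass conservation and $E_i(t)\le E(0)$. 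This yields
\[
\intr \rho^\delta(t)\,dx \le m_\rho(0)^{\frac{\gamma-\delta}{\gamma-1}}\bigl((\gamma-1)E(0)\bigr)^{\frac{\delta-1}{\gamma-1}},
\]
which produces the stated $C_1$. Your version, besides being logically invalid, would give the wrong sign in front of the correction term.
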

\begin{proof} The proofs of the assertions (i)--(iii) can be found in \cite{C17}. Since the proof for (iv) is slightly different from \cite[Lemma 3.2]{C17}, for the completeness of our work, we briefly sketch it here.\\

\noindent We first easily find
\begin{align}\label{new1}
\begin{aligned}
\frac{d}{dt} W(t) &= \intr \rho |u|^2\,dx + 3\intr p(\rho)\,dx + \intrr |\xi|^2 f\,dxd\xi - \intr (2\mu(\rho) + 3\lambda(\rho))(\nabla \cdot u)\,dx\cr
&=2(E_f + E_k) + 3(\gamma - 1) E_i- (2\alpha + 3\beta)\intr \rho^\delta (\nabla \cdot u)\,dx.
\end{aligned}
\end{align}
%\textcolor{blue}{More details..}
On the other hand, we notice that
$$\begin{aligned}
\frac{d}{dt}\intr \mu(\rho)\,dx &= \intr \mu^\prime(\rho) \rho_t\,dx \cr
&= -\intr \mu^\prime(\rho)\nabla \cdot(\rho u)\,dx \cr
&= -\intr (\rho \mu^\prime(\rho) - \mu(\rho))(\nabla \cdot u)\,dx \cr
&=  -\alpha(\delta-1)\intr \rho^\delta (\nabla \cdot u)\,dx.
\end{aligned}$$
This together with Lemma \ref{lem_energy} implies
$$%\begin{align}\label{est_w1}
\begin{aligned}
\frac{d}{dt} W(t) &= 2(E_k + E_f) + 3(\gamma - 1) E_i + \frac{2\alpha + 3\beta}{\alpha(\delta-1)}\frac{d}{dt}\intr \mu(\rho)\,dx\cr
&\leq \max\{2,3(\gamma - 1)\}E(0) + \frac{2\alpha + 3\beta}{\alpha(\delta-1)}\frac{d}{dt}\intr \mu(\rho)\,dx.
\end{aligned}
$$%\end{align}
Thus we obtain
\[
W(t)   \leq W(0)  + \frac{c_\mu(2\alpha + 3\beta)}{\delta-1}  + \lt(  \max\{2,3(\gamma - 1)\}E(0) \rt)t,
\]
where we used
\bq\label{new2}
\intr \rho^\delta\,dx \leq (m_\rho(0))^{\frac{\gamma-\delta}{\gamma-1}}(\gamma-1)^{\frac{\delta-1}{\gamma-1}}E_i(t)^{\frac{\delta-1}{\gamma-1}} \leq (m_\rho(0))^{\frac{\gamma-\delta}{\gamma-1}}(\gamma-1)^{\frac{\delta-1}{\gamma-1}}E(0)^{\frac{\delta-1}{\gamma-1}} =: c_\mu.
\eq
Subsequently, we have
\[
I(t) \leq I(0) + \lt( W(0)  + \frac{c_\mu(2\alpha + 3\beta)}{\delta-1}\rt)t + \frac12\lt( \max\{2,3(\gamma - 1)\}E(0)\rt)t^2,
\]
due to $E(t) \geq 0$ and $I^\prime(t) = W(t)$.
\end{proof}

We next recall from \cite[Lemma A.1]{C17} the following Gr\"onwall-type inequality.  
\begin{lemma}\label{lem_usef2} Let us consider the following differential inequality:
\[%\bq\label{est_diff}
f^\prime(t) \leq \frac{a}{t+1} f(t) + \frac{b}{(t+1)^{2c}}f^c,
\]%\eq
where $a,b$, and $c$ are positive constants, and $f \geq 0$. Suppose $c \leq 1$. 
\begin{itemize}
\item If $c = 1$, $f$ satisfies
\[
f(t) \leq f(0)e^b (t+1)^a.
\]
\item If $c < 1$ and $2c + a(1-c) = 1$, $f$ satisfies
\[
f(t)^{1-c} \leq f(0)^{1-c}(t+1)^{1-2c} + (1-2c)(t+1)^{1-2c}\ln(t+1).
\]
\item If $c < 1$ and $2c + a(1-c) \neq 1$, $f$ satisfies
\[
f(t)^{1-c} \leq f(0)^{1-c}(t+1)^{a(1-c)} + \frac{b(1-c)}{1 - (2c + a(1-c))}\lt( (t+1)^{1-2c} - (t+1)^{a(1-c)} \rt).
\]
\end{itemize}
\end{lemma}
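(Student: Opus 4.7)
The plan is to treat the inequality as a Bernoulli-type differential inequality and reduce it to a linear first-order inequality, then apply the integrating factor method with the weight $(t+1)^{-a(1-c)}$ (or $e^{-a\ln(t+1)} = (t+1)^{-a}$ in the borderline case $c=1$). Throughout, one may assume without loss of generality that $f > 0$; otherwise the bound is trivially attained by replacing $f$ by $f + \varepsilon$, running the estimates, and letting $\varepsilon \to 0$.

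For the case $c = 1$, the inequality reduces to a purely linear one:
\[
f'(t) \le \Bigl(\tfrac{a}{t+1} + \tfrac{b}{(t+1)^2}\Bigr) f(t),
\]
so standard Gr\"onwall yields $f(t) \le f(0)\exp\!\bigl(\int_0^t \tfrac{a}{s+1} + \tfrac{b}{(s+1)^2}\,ds\bigr)$. Since $\int_0^t \tfrac{a}{s+1}\,ds = a\ln(t+1)$ and $\int_0^t \tfrac{b}{(s+1)^2}\,ds = b(1 - (t+1)^{-1}) \le b$, the stated bound $f(t) \le f(0)e^b(t+1)^a$ follows immediately.

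For the case $c < 1$, set $g(t) := f(t)^{1-c}$. Then $g'(t) = (1-c)f^{-c}f'(t)$, and dividing the hypothesis by $f^c$ and multiplying by $(1-c)$ gives the linear inequality
\[
g'(t) \le \tfrac{a(1-c)}{t+1}\, g(t) + \tfrac{b(1-c)}{(t+1)^{2c}}.
\]
Multiplying by the integrating factor $(t+1)^{-a(1-c)}$ produces
\[
\tfrac{d}{dt}\bigl[(t+1)^{-a(1-c)} g(t)\bigr] \le b(1-c)\,(t+1)^{-(2c + a(1-c))}.
\]
Integrating from $0$ to $t$ and multiplying through by $(t+1)^{a(1-c)}$ yields the desired estimate, with the critical case determined by whether the right-hand integrand is exactly $(s+1)^{-1}$.

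The split into the two subcases is then a matter of evaluating $\int_0^t (s+1)^{-(2c+a(1-c))}\,ds$: if $2c + a(1-c) = 1$ the integral equals $\ln(t+1)$, which (after multiplying back by $(t+1)^{a(1-c)} = (t+1)^{1-2c}$) gives the logarithmic bound; otherwise the integral equals $\bigl((t+1)^{1-2c-a(1-c)} - 1\bigr)/(1 - 2c - a(1-c))$, and multiplying by $(t+1)^{a(1-c)}$ produces precisely the difference of powers $(t+1)^{1-2c} - (t+1)^{a(1-c)}$ with the stated prefactor $b(1-c)/(1 - 2c - a(1-c))$. There is no serious obstacle here; the only care needed is in the Bernoulli substitution step (justifying the division by $f^c$, handled by an $\varepsilon$-regularization as noted) and in keeping track of the exponent algebra $a(1-c) + 1 - (2c + a(1-c)) = 1 - 2c$ that collapses the two powers into the stated form.
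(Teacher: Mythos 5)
Your proof is correct and follows the standard route for this Bernoulli-type Gr\"onwall inequality: linear Gr\"onwall for $c=1$, and for $c<1$ the substitution $g=f^{1-c}$ (justified by the $\varepsilon$-regularization you mention, which works because the right-hand side is monotone in $f$) followed by the integrating factor $(t+1)^{-a(1-c)}$ and the exponent identity $a(1-c)+1-(2c+a(1-c))=1-2c$. There is nothing in the paper to compare against: the lemma is recalled from \cite[Lemma A.1]{C17} without proof. One substantive remark: in the borderline case $2c+a(1-c)=1$ your computation produces the coefficient $b(1-c)$ in front of $(t+1)^{1-2c}\ln(t+1)$, whereas the statement as printed has $(1-2c)$. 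Since $a(1-c)=1-2c$ in that case, the printed coefficient is independent of $b$ and cannot be correct for arbitrary $b>0$ (scale $b$ up and the bound would fail), so this is a typo in the statement and your version is the correct one; note this borderline case is not the one invoked in the blow-up argument of Section \ref{sec:blow}, which uses only the third bullet, where your derivation reproduces the stated constant exactly.
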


We now consider 
\[
J(t) := I(t) - (t+1)W(t) + (t+1)^2E(t),
\]
from which we estimate the upper bound on $E_i$.

\begin{lemma}\label{lem_gd} For $1 < \gamma < \frac53$, if $\gamma - \frac13< \delta < \gamma$, we have the following upper bounds of $J$:
\[
J(t) \leq \lt(J(0)^{\frac{\gamma-\delta}{\gamma-1}} +\frac{(2\alpha + 9\beta) \,(m_\rho(0))^{\frac{\gamma-\delta}{\gamma-1}}(\gamma-1)^{\frac{\delta-\gamma}{\gamma-1}}(\gamma-\delta)}{4(1 - 3(\gamma - \delta))}\rt)^{\frac{\gamma-1}{\gamma-\delta}}(t+1)^{2 - 3(\gamma-1)}.
\]
\end{lemma}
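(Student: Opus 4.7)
The plan is to derive a Bernoulli-type differential inequality for $J(t)$ and then invoke Lemma~\ref{lem_usef2}. Differentiating $J(t)=I(t)-(t+1)W(t)+(t+1)^2E(t)$ and using $I'(t)=W(t)$ from Lemma~\ref{lem_useful}~(i), the virial identity for $W'(t)$ visible in \eqref{new1}, and the energy identity in Lemma~\ref{lem_energy}~(ii), the cancellations $I'(t)-W(t)=0$ and $2E(t)-2(E_k+E_f)=2E_i(t)$ leave
\begin{align*}
J'(t) &= (5-3\gamma)(t+1)E_i(t) + (2\alpha+3\beta)(t+1)\intr \rho^\delta(\nabla\cdot u)\,dx \\
&\quad - (t+1)^2 \intr \lt(2\mu(\rho)\mathbb{T}(u):\mathbb{T}(u)+\lambda(\rho)|\nabla\cdot u|^2\rt) dx - (t+1)^2 \intrr \rho^m|u-\xi|^2 f\,dxd\xi,
\end{align*}
and $5-3\gamma=2-3(\gamma-1)>0$ by the assumption $\gamma<5/3$.

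Next I absorb the cross term $(2\alpha+3\beta)(t+1)\intr\rho^\delta(\nabla\cdot u)\,dx$ into the viscous dissipation. The pointwise inequality $|\nabla\cdot u|^2\leq 3\,\mathbb{T}(u):\mathbb{T}(u)$ together with $2\alpha+3\beta\geq 0$ shows that the dissipation dominates $\frac{2\alpha+3\beta}{3}\intr\rho^\delta(\nabla\cdot u)^2\,dx$, so a Cauchy--Schwarz bound followed by Young's inequality, with its parameter tuned to consume exactly the $(t+1)^2$-weighted dissipation, yields
\[
(2\alpha+3\beta)(t+1)\intr\rho^\delta(\nabla\cdot u)\,dx \leq \tilde C \intr \rho^\delta\,dx + (t+1)^2\intr\lt(2\mu\mathbb{T}:\mathbb{T}+\lambda|\nabla\cdot u|^2\rt) dx
\]
for an explicit $\tilde C=\tilde C(\alpha,\beta)$. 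Dropping the non-positive drag term and noting the elementary identity
\[
J(t)=\frac12\intr\rho|x-(t+1)u|^2\,dx+\frac12\intrr f|x-(t+1)\xi|^2\,dxd\xi+(t+1)^2 E_i(t)\geq (t+1)^2E_i(t),
\]
so that $E_i(t)\leq J(t)/(t+1)^2$, and combining with the interpolation bound \eqref{new2}, I arrive at the Bernoulli-type inequality
\[
J'(t)\leq \frac{a}{t+1}J(t)+\frac{b}{(t+1)^{2c}}J(t)^c, \qquad a=5-3\gamma,\ c=\frac{\delta-1}{\gamma-1}<1,
\]
with $b$ equal to $\tilde C\cdot (m_\rho(0))^{(\gamma-\delta)/(\gamma-1)}(\gamma-1)^{(\delta-1)/(\gamma-1)}$ prescribed by the target bound.

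To finish, I check the hypotheses of Lemma~\ref{lem_usef2}: $c<1$ because $\delta<\gamma$, while $\delta>\gamma-1/3$ gives $3(\gamma-\delta)<1$, so $2c+a(1-c)=2-3(\gamma-\delta)\neq 1$, placing us in the third case, and the same computation yields $1-2c<a(1-c)$. Invoking the third case of Lemma~\ref{lem_usef2} gives
\[
J(t)^{1-c}\leq J(0)^{1-c}(t+1)^{a(1-c)}+\frac{b(1-c)}{1-(2c+a(1-c))}\lt[(t+1)^{1-2c}-(t+1)^{a(1-c)}\rt];
\]
both the prefactor and the bracketed quantity are negative, so the second term can be rewritten as $\frac{b(1-c)}{1-3(\gamma-\delta)}[(t+1)^{a(1-c)}-(t+1)^{1-2c}]$ and then bounded above by $\frac{b(1-c)}{1-3(\gamma-\delta)}(t+1)^{a(1-c)}$. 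Raising both sides to the power $1/(1-c)=(\gamma-1)/(\gamma-\delta)$ and using $a=2-3(\gamma-1)$ yields the stated bound. The main obstacle is step two: choosing the Young parameter so that the cross term is consumed by precisely the available viscous coercivity while the leftover constant $\tilde C$ matches the coefficient displayed in the target inequality. The remaining steps are a routine symbolic match of $(a,b,c)$ against the hypotheses of Lemma~\ref{lem_usef2}.
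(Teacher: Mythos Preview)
Your argument follows the paper's proof essentially step for step: compute $J'(t)$ from $I'=W$, the virial identity \eqref{new1}, and the energy law; absorb the cross term $(2\alpha+3\beta)(t+1)\intr\rho^\delta(\nabla\cdot u)\,dx$ into the $(t+1)^2$-weighted dissipation by Young's inequality; use $E_i\leq (t+1)^{-2}J$ (the paper obtains this via the decomposition $J=J_\rho+J_f$ and Cauchy--Schwarz on $W_\rho,W_f$, while your completion-of-squares identity is an equivalent and slightly cleaner route); bound $\intr\rho^\delta\,dx$ by \eqref{new2}; and invoke the third case of Lemma~\ref{lem_usef2} after checking $2c+a(1-c)=2-3(\gamma-\delta)>1$.

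The only substantive divergence is in how the Young step is organized, and here your hoped-for match of $\tilde C$ does not occur. Lower-bounding the full dissipation by $\frac{2\alpha+3\beta}{3}\rho^\delta(\nabla\cdot u)^2$ via $|\nabla\cdot u|^2\leq 3\,\mathbb{T}(u):\mathbb{T}(u)$ and then applying a single Young's inequality forces $\tilde C=\frac{3(2\alpha+3\beta)}{4}=\frac{6\alpha+9\beta}{4}$, which is strictly larger than the displayed $\frac{2\alpha+9\beta}{4}$. The paper instead splits $(2\mu+3\lambda)(\nabla\cdot u)=2\mu(\nabla\cdot u)+3\lambda(\nabla\cdot u)$ and handles each summand separately, pairing $2(t+1)\intr\mu(\nabla\cdot u)$ against $(t+1)^2\intr 2\mu\,\mathbb{T}:\mathbb{T}$ and $3(t+1)\intr\lambda(\nabla\cdot u)$ against $(t+1)^2\intr\lambda(\nabla\cdot u)^2$; this produces the leftover $\frac12\intr\mu+\frac94\intr\lambda=\frac{2\alpha+9\beta}{4}\intr\rho^\delta$. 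So your identified ``main obstacle'' is real: your combined-coercivity route yields a correct inequality of the same form but with a larger constant, and to reproduce the exact coefficient in the statement you must follow the paper's two-piece splitting rather than a single absorption.
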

\begin{proof}We first decompose the function $J$ into two terms: $J = J_\rho + J_f$ where
\[
J_\rho(t) := I_\rho(t) - (t+1)W_\rho(t) + (t+1)^2E_\rho(t) \quad \mbox{and} \quad J_f(t) := I_f(t) - (t+1)W_f(t) + (t+1)^2E_f(t).
\]
Here $E_\rho := E_k + E_i$. Then since
\[
W_\rho(t)^2 \leq 4I_\rho(t) E_k(t) \quad \mbox{and} \quad W_f(t)^2\leq 4I_f(t) E_f(t),
\]
we get
\[
I_\rho(t) - (t+1)W_\rho(t) + (t+1)^2E_k(t) \geq 0 \quad \mbox{and} \quad I_f(t) - (t+1)W_f(t) + (t+1)^2E_f(t) \geq 0.
\]
This implies
\bq\label{est_j}
J_\rho(t) \geq (t+1)^2 E_i(t), \quad J_f(t) \geq 0, \quad \mbox{and} \quad J(t) \geq (t+1)^2 E_i(t).
\eq
On the other hand, it follows from Lemma \ref{lem_useful} (i) that we find
\[
\frac{d}{dt} J_\rho(t) = -(t+1)\frac{d}{dt} W_\rho(t) + 2(t+1)E_\rho(t) + (t+1)^2 \frac{d}{dt} E_\rho(t),
\]
and
\[
\frac{d}{dt} J_f(t) = -(t+1)\frac{d}{dt} W_f(t) + 2(t+1)E_f(t) + (t+1)^2 \frac{d}{dt}E_f(t).
\]
We then combine that with the identity \eqref{new1} to get
\begin{align}\label{est_jj}
\begin{aligned}
\frac{d}{dt}J(t) &= -(t+1)\frac{d}{dt}W(t) + 2(t+1)E(t) + (t+1)^2\frac{d}{dt}E(t)\cr
&= (t+ 1)(2 - 3(\gamma - 1))E_i + (t+1)\intr (2\mu(\rho) + 3\lambda(\rho))(\nabla \cdot u)\,dx + (t+1)^2\frac{d}{dt} E(t).
\end{aligned}
\end{align}
On the other hand, we use the H\"older's inequality to find
\begin{align}\label{est_jjj}
\begin{aligned}
2(t+1)\intr \mu(\rho)(\nabla \cdot u)\,dx &\leq \frac12\intr \mu(\rho)\,dx + (t+1)^2\intr 2\mu(\rho)|\nabla \cdot u|^2\,dx\cr
&\leq \frac12\intr \mu(\rho)\,dx  + (t+1)^2\intr 2\mu(\rho)\mathbb{T}(u):\mathbb{T}(u)\,dx\cr
3(t+1)\intr \lambda(\rho)(\nabla \cdot u)\,dx & \leq \frac 94\intr \lambda(\rho)\,dx + (t+1)^2\intr \lambda(\rho)|\nabla \cdot u|^2\,dx\cr
&= \frac 94\intr \lambda(\rho)\,dx+ (t+1)^2\intr \lambda(\rho)|\nabla \cdot u|^2\,dx.
\end{aligned}
\end{align}
This and together with the energy estimate in Lemma \ref{lem_energy} implies
$$\begin{aligned}
&(t+1)\intr (2\mu(\rho) + 3\lambda(\rho))(\nabla \cdot u)\,dx \cr
&\quad \leq \frac12\intr \mu(\rho)\,dx + \frac 94\intr \lambda(\rho)\,dx + (t+1)^2\lt(\intr 2\mu(\rho)\mathbb{T}(u):\mathbb{T}(u)\,dx+ \intr \lambda(\rho)|\nabla \cdot u|^2\,dx\rt)\cr
&\quad \leq \lt(\frac\alpha2 + \frac{9\beta}4\rt)\intr \rho^\delta\,dx -(t+1)^2\frac{d}{dt} E(t),
\end{aligned}$$
due to $\mu(\rho) = \alpha \rho^\delta = (\alpha/\beta) \lambda(\rho)$.\\

\noindent Then we estimate
$$\begin{aligned}
\frac{d}{dt} J(t) &=(t+ 1)(2 - 3(\gamma - 1))E_i + (t+1)\intr (2\mu(\rho) + 3\lambda(\rho))(\nabla \cdot u)\,dx + (t+1)^2\frac{d}{dt} E(t)\cr
& \leq (t+1)(2 - 3(\gamma - 1))E_i(t) + \frac{2\alpha + 9\beta}{4}\intr \rho^\delta\,dx\cr
&\leq (t+1)(2 - 3(\gamma - 1))E_i(t) + \frac{(m_\rho(0))^{\frac{\gamma-\delta}{\gamma-1}}(\gamma-1)^{\frac{\delta-1}{\gamma-1}}\lt(2\alpha + 9\beta \rt)}{4}E_i(t)^{\frac{\delta-1}{\gamma-1}},
\end{aligned}$$
where we used \eqref{new2}, \eqref{est_jj}, and \eqref{est_jjj}. If $2 - 3(\gamma - 1) > 0$, then it follows from \eqref{est_j} that
\[
\frac{d}{dt}J(t) \leq \frac{2 - 3(\gamma - 1)}{(t+1)}J(t) + \frac{(m_\rho(0))^{\frac{\gamma-\delta}{\gamma-1}}(\gamma-1)^{\frac{\delta-1}{\gamma-1}}\lt(2\alpha + 9\beta \rt)}{4(t+1)^{\frac{2(\delta - 1)}{\gamma-1}}}J(t)^{\frac{\delta-1}{\gamma-1}}.
\]
We now apply Lemma \ref{lem_usef2} to the above differential inequality with 
\[
a = 2 - 3(\gamma - 1), \quad b = \frac{(m_\rho(0))^{\frac{\gamma-\delta}{\gamma-1}}(\gamma-1)^{\frac{\delta-1}{\gamma-1}}\lt(2\alpha + 3\beta  \rt)}{4},\quad \mbox{and}\quad c= \frac{\delta-1}{\gamma-1}.
\]
Note that if $\delta > \gamma - \frac13$, then it is clear to get $2c + a(1-c) = 2-3(\gamma-\delta) > 1$. Thus we have
\[
J(t) \leq \lt(J(0)^{\frac{\gamma-\delta}{\gamma-1}} +\frac{\lt(2\alpha + 9\beta  \rt) \,(m_\rho(0))^{\frac{\gamma-\delta}{\gamma-1}}(\gamma-1)^{\frac{\delta-\gamma}{\gamma-1}}(\gamma-\delta)}{4(1 - 3(\gamma - \delta))}\rt)^{\frac{\gamma-1}{\gamma-\delta}}(t+1)^{2 - 3(\gamma-1)}.
\]
This completes the proof.
\end{proof}

\begin{proof}[Proof of Theorem \ref{main_thm2}]  It follows from Lemma \ref{lem_useful} (iii), Lemma \ref{lem_gd}, and \eqref{est_j} that
\[
\frac{C_0}{I_\rho(t)^{\frac{3(\gamma-1)}{2}}} \leq E_i(t) \leq \frac{C_3}{(t+1)^{3(\gamma - 1)}},
\]
where $C_3$ is a positive constant given by
\[
C_3 :=  \lt(J(0)^{\frac{\gamma-\delta}{\gamma-1}} +\frac{\lt(2\alpha + 9\beta \rt) \,(m_\rho(0))^{\frac{\gamma-\delta}{\gamma-1}}(\gamma-1)^{\frac{\delta-\gamma}{\gamma-1}}(\gamma-\delta)}{4(1 - 3(\gamma - \delta))}\rt)^{\frac{\gamma-1}{\gamma-\delta}}.
\]
On the other hand, we also find from Lemma \ref{lem_useful} (iv) that  
\[
I_\rho(t) \leq I(t) \leq I(0) + C_1 \, t + C_2\, t^2,
\]
and this yields
\[
\frac{C_0}{(I(0) + C_1 \, t + C_2 \, t^2)^{\frac{3(\gamma-1)}{2}}} \leq E_i(t) \leq \frac{C_3}{(t+1)^{3(\gamma - 1)}}.
\]
Therefore if the initial data satisfy
\[
C_0 > C_3 C_2,
\]
the life-span $T$ of classical solutions should be finite.
\end{proof}

%%%%%%%%%%%%%%%%%%%%%%%%%%%%%%%%%%%%%%%%%%%%%%%%%%%%%%%%%%%%%%%%%%%%%%%%%%%%%%%%%%%%%%%%%%%%%%%%%%%%%%%%%%%%%%%%%%%%%%%%%%%%%%%%%%%%%%%%%%%%%%
%
%
%    Acknowledgments
%
%
%%%%%%%%%%%%%%%%%%%%%%%%%%%%%%%%%%%%%%%%%%%%%%%%%%%%%%%%%%%%%%%%%%%%%%%%%%%%%%%%%%%%%%%%%%%%%%%%%%%%%%%%%%%%%%%%%%%%%%%%%%%%%%%%%%%%%%%%%%%%%%%%

\section*{Acknowledgments}
Y.-P. Choi was supported by National Research Foundation of Korea(NRF) grant funded by the Korea government(MSIP) (No. 2017R1C1B2012918). The work of J. Jung was supported by NRF grant (No. 2019R1A6A1A10073437).

\appendix

\section{Growth estimate of the support of $f$ in velocity}\label{app_spt}

In our study, we considered the particle distribution function $f$ which has a finite velocity moment, and this led to introducing the weighted Sobolev space, $H^{k,p}_\nu(\R^3 \times \R^3)$. On the other hand, if we assume that the initial data $f_0$ has a compact support in velocity, then we can show that our regular solution $f(\cdot,\cdot,t)$ is compactly supported in velocity. Thus, in this case, we do not need to introduce the weight Sobolev space, and the dealing with the standard Sobolev space $H^{k,p}(\R^3 \times \R^3)$ is enough to have our regular solution. 

In order to illustrate this strategy, in this appendix, we provide the estimate of the support of $f$ in velocity. We set $\om(t)$ and $R^\xi(t)$ the $\xi$-projection of support of $f(\cdot,\cdot,t)$ and the maximum value of $v$ in $\om(t)$. Then, under assumption \eqref{as_psi_v}, we define the following forward characteristics $(X(s), \Xi(s)) := (X(s;x,\xi,0), \Xi(s;x,\xi,0))$ for a given $(x,\xi) \in \R^3 \times \R^3$ associated to the kinetic equation in \eqref{lin_sys} by
\begin{align}\label{eqn_tra2}
\begin{aligned}
\frac{dX(s)}{ds} &= \Xi(s),\cr
\frac{d\Xi(s)}{ds} &=  n^{m\theta}(X(s),s)(v(X(s),s) - \Xi(s)),
\end{aligned}
\end{align}
with the initial condition $(X(0),\Xi(0)) = (x,\xi)$. Note that the regularity assumptions on $v$ in \eqref{as_psi_v}, the regularity estimate on $n$ in Proposition \ref{prop_lsol}, and Cauchy--Lipschitz theorem yield the well-posedness of the characteristics $(X(s), \Xi(s))$ on the time interval $[0,T]$. 

In the lemma below, we present the growth estimate for support of $f$ in velocity.
\begin{lemma} Let $(X(s), \Xi(s))$ be the solution to the characteristic equations \eqref{eqn_tra2} emanating from an initial point in the support of $f_0$. Then we have 
\[
\sup_{0 \leq t \leq T}|\Xi(t)| \leq |\xi| + C
\]
for some $C>0$. In particular, the support of $f(\cdot,\cdot,t)$ in velocity is bounded from above by some positive constant over the time interval $[0,T]$, that is,  $|\om(t)| \leq C$ for $t \in [0,T]$. 

\end{lemma}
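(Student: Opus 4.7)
The ODE in \eqref{eqn_tra2} is a linear-in-$\Xi$ relaxation equation of the form
\[
\frac{d\Xi(s)}{ds} = -a(s)\,\Xi(s) + a(s)\,v(X(s),s), \qquad a(s) := n^{m\theta}(X(s),s) \ge 0.
\]
This is the key structural observation: the coefficient in front of $\Xi$ is nonpositive, so the characteristic velocity is damped toward the fluid velocity $v$ rather than amplified. The plan is to exploit this dissipative sign by computing $\tfrac{d}{ds}|\Xi(s)|^2$ and applying a Gr\"onwall-type argument.

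First I would compute
\[
\frac{d}{ds}|\Xi(s)|^2 = 2\,\Xi(s)\cdot\frac{d\Xi(s)}{ds} = -2a(s)|\Xi(s)|^2 + 2a(s)\,\Xi(s)\cdot v(X(s),s),
\]
and use Cauchy--Schwarz to get $\tfrac{d}{ds}|\Xi(s)| \le -a(s)|\Xi(s)| + a(s)|v(X(s),s)|$ at points where $|\Xi(s)|>0$ (the case $|\Xi(s)|=0$ is handled by continuity or by working with $(|\Xi|^2+\epsilon)^{1/2}$ and letting $\epsilon\to 0$). Integrating via the integrating factor $\exp(\int_0^s a(\tau)\,d\tau)$ yields
\[
|\Xi(t)| \le e^{-\int_0^t a(\tau)d\tau}|\xi| + \int_0^t a(s)\,|v(X(s),s)|\, e^{-\int_s^t a(\tau)d\tau}\,ds.
\]
Since $a(s)\ge 0$, both exponential factors are bounded by $1$, hence
\[
|\Xi(t)| \le |\xi| + \int_0^t a(s)\,|v(X(s),s)|\,ds.
\]

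Next I would bound the integrand pointwise by $\|n\|_{L^\infty_{t,x}}^{m\theta}\,\|v\|_{L^\infty_{t,x}}$. The assumption \eqref{as_psi_v} gives $v\in L^\infty(0,T;H^3(\R^3))$, and Proposition \ref{prop_lsol} (equivalently, Lemma \ref{lem_n}) gives $n-n_\infty\in\mc([0,T];H^3(\R^3))$; since $H^3(\R^3)\hookrightarrow L^\infty(\R^3)$ in three space dimensions, both $n$ and $v$ are uniformly bounded on $[0,T]\times\R^3$ by a constant depending only on the fixed data and on $T$. Consequently
\[
\sup_{0\le t\le T}|\Xi(t)| \le |\xi| + T\,\|n\|_{L^\infty_{t,x}}^{m\theta}\,\|v\|_{L^\infty_{t,x}} =: |\xi|+C,
\]
which is the first claim.

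For the statement on $\om(t)$, I would use that $f$ is transported along the flow $(X(s),\Xi(s))$: from the kinetic equation in \eqref{lin_sys} we get
\[
\frac{d}{ds}f(X(s),\Xi(s),s) = 3\,n^{m\theta}(X(s),s)\,f(X(s),\Xi(s),s),
\]
so $f(X(t),\Xi(t),t) = f_0(x,\xi)\,\exp\bigl(3\int_0^t a(s)\,ds\bigr)$. Thus the $(x,\xi)$-support of $f(\cdot,\cdot,t)$ is exactly the image under the flow of $\operatorname{supp} f_0$. Since $\operatorname{supp} f_0$ is compact with $|\xi|\le R_0$ for some $R_0>0$, combining with the bound above yields $|\Xi(t)|\le R_0+C$ for every characteristic starting in $\operatorname{supp} f_0$, hence $|\om(t)|\le R_0+C$ on $[0,T]$. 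I do not expect a real obstacle here; the only minor point to be careful about is that the constant $C$ is allowed to depend on $T$, which is compatible with the local-in-time framework of Theorem \ref{thm_lin}.
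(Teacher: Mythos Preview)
Your proposal is correct and follows essentially the same approach as the paper: derive the differential inequality $\tfrac{d}{ds}|\Xi(s)| \le a(s)(|v(X(s),s)| - |\Xi(s)|)$, integrate with the exponential factor, drop the exponentials using $a\ge 0$, and bound the remaining integral via the $L^\infty_{t,x}$ control on $n$ and $v$ coming from \eqref{as_psi_v} and Proposition~\ref{prop_lsol}. The paper is slightly terser on the support conclusion, which you spell out via the explicit formula $f(X(t),\Xi(t),t)=f_0(x,\xi)\exp(3\!\int_0^t a)$, but this is the intended argument.
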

\begin{proof}It follows from the velocity equation in \eqref{eqn_tra2} that
\[
\frac{d|\Xi(s)|}{ds} \leq  n^{m\theta}(X(s),s)(|v(X(s),s)| - |\Xi(s)|).
\]
Applying Gr\"onwall's lemma to the above gives
$$\begin{aligned}
|\Xi(t)| &\leq |\xi| \exp\lt(- \int_0^t n^{m\theta}(X(s),s)\,ds \rt) \cr
&\quad + \int_0^t n^{m\theta}(X(s),s) |v(X(s),s)| \exp\lt( -\int_s^t n^{m\theta}(X(\tau),\tau)\,d\tau \rt) ds\cr
&\leq |\xi| + \int_0^t n^{m\theta}(X(s),s) |v(X(s),s)|\,ds
\end{aligned}$$
for $\xi \in \om(0)$ and $t \in [0,T]$, where we used the positivity of $n$. Moreover, by the assumptions on $v$ in \eqref{as_psi_v} and Proposition \ref{prop_lsol}, we estimate
$$\begin{aligned}
\int_0^t n^{m\theta}(X(s),s) |v(X(s),s)|\,ds &\leq \int_0^t \|n(\cdot,s)\|_{L^\infty}^{m\theta} \|v(\cdot,s)\|_{L^\infty}\,ds\cr
&\leq C\int_0^t \lt(\|n(\cdot,s) - n_\infty\|_{L^\infty}^{m\theta} + |n_\infty|^{m\theta}\rt)\|v(\cdot,s)\|_{L^\infty}\,ds\cr
&\leq Ct,
\end{aligned}$$
for some $C>0$. This concludes
\[
|\Xi(t)| \leq |\xi| + C
\]
for $t \in [0,T]$.
\end{proof}

The above lemma shows the finite propagation of support of $f$ in velocity, and this simplifies many estimates related to the drag force term in the momentum equations. 
%%%%%%%%%%%%%%%%%%%%%%%%%%%%%%%%%%%%%%%%%%%%%
%
%
%       
%
%
%%%%%%%%%%%%%%%%%%%%%%%%%%%%%%%%%%%%%%%%%%%%%
\section{$H^{2,p}_\nu$-estimate of $f$ \& $H^{1,p-2}_\nu$-estimate of $\pa_t f$}\label{app_f}
In this appendix, we first estimate $\dot{H}^{2,p}_\nu$ norm of $f$ to complete the proof for the first assertion in Lemma \ref{lem_f}.

Note for $i,j=1,2,3$ that $\pa_{ij} f$ satisfies 
$$\begin{aligned}
&\pa_t \pa_{ij} f + \xi \cdot \nabla \pa_{ij} f +  \pa_i (n^{m\theta} (v-\xi) \cdot \nabla_\xi \pa_j f) +  \pa_i (n^{m\theta} \pa_ j v \cdot \nabla_\xi f)\cr
&\quad = 3\pa_i (\pa_j(n^{m\theta})  f) -  \pa_i (\pa_j(n^{m\theta})(v-\xi) \cdot \nabla_\xi f) + 3 \pa_i(n^{m\theta} \pa_j f).
\end{aligned}$$
Thus we get
$$\begin{aligned}
&\frac12\frac{d}{dt}\intrr \nu_p(x,\xi) |\pa_{ij}f|^2\,dxd\xi \cr
&\quad = -\intrr \nu_p(x,\xi) \pa_{ij} f \cdot \lt(  \xi \cdot \nabla \pa_{ij} f  + \pa_i \lt( n^{m\theta} (v - \xi) \cdot \nabla_\xi \pa_j f\rt) + \pa_i \lt( n^{m\theta} \pa_j v \cdot\nabla_\xi f\rt)\rt)dxd\xi\cr
&\qquad +\intrr \nu_p(x,\xi) \pa_{ij} f \cdot \lt( 3\pa_i \lt(\pa_j(n^{m\theta}) f \rt) - \pa_i \lt(\pa_j(n^{m\theta}) (v- \xi)\cdot\nabla_\xi f \rt)   \rt)dxd\xi\cr
&\qquad -\intrr \nu_p(x,\xi) \pa_{ij} f \cdot \lt(3\pa_i \lt( n^{m\theta}\pa_j f\rt)\rt)dxd\xi\cr
&\quad =: \sum_{i=1}^6 \sfI_i,
\end{aligned}$$
where $\sfI_1$ can be estimated as
\[
\sfI_1 =  p\intrr \nu_{p-2}(x,\xi) x \cdot \xi |\pa_{ij} f|^2\,dxd\xi \leq C\|\pa_{ij} f\|_{L^{2,p}_\nu}^2.
\]
We separately estimate $\sfI_2$ as follows:
\begin{align*}
\sfI_2 &= -\intrr \nu_p(x,\xi) \pa_{ij} f \cdot (\pa_i (n^{m\theta})(v-\xi) \cdot \nabla_\xi \pa_j f + n^{m\theta} \pa_i v \cdot \nabla_\xi \pa_j f)\,dxd\xi\\
&\quad -\intrr \nu_p(x,\xi)\pa_{ij}f \cdot (n^{m\theta} (v-\xi)\cdot \nabla_\xi \pa_{ij} f )\,dxd\xi\cr
&=: \sfI_2^1 + \sfI_2^2 + \sfI_2^3.
\end{align*}
For $\sfI_2^1$,

\[\begin{aligned}
\sfI_2^1 &= -m\theta \intrr \nu_p(x,\xi)(\pa_{ij} f) n^{m\theta-1} (\pa_i n) v\cdot \nabla_\xi \pa_j f\,dxd\xi\\
&\quad +m\theta \intrr \nu_p(x,\xi) (\pa_{ij} f) n^{m\theta-1} (\pa_i n) \xi\cdot \nabla_\xi \pa_j f\,dxd\xi\\
&\le C\|n\|_{L^\infty}^{m\theta-1}\|\nabla n\|_{L^\infty}\|v\|_{L^\infty}\|f\|_{H_\nu^{2,p}}^2+ (m\theta)^2\intrr \nu_p(x,\xi) |\pa_i n|^2 n^{m\theta-2}|\nabla_\xi \pa_j f|^2\,dxd\xi
\\
&\quad +\frac14\intrr  \nu_p(x,\xi)|\xi|^2 n^{m\theta} |\pa_{ij} f|^2\,dxd\xi\\
&\le C\|n\|_{L^\infty}^{m\theta-2}(1+\|n\|_{L^\infty}+\|\nabla n\|_{H^2})^2(1+\|v\|_{L^\infty})\|f\|_{H_\nu^{2,p}}^2 +\frac14\intrr  \nu_p(x,\xi)|\xi|^2 n^{m\theta} |\pa_{ij} f|^2\,dxd\xi.
 \end{aligned}\]
We easily estimate $\sfI_2^2$ as

\[
\sfI_2^2 \le \|n\|_{L^\infty}^{m\theta}\|\nabla v\|_{L^\infty}\|f\|_{H_\nu^{2,p}}^2.
\]
For $\sfI_2^3$,

\[\begin{aligned}
\sfI_2^3&= \frac12\intrr \nabla_\xi \cdot (\nu_p(x,\xi)(v-\xi)) n^{m\theta}|\pa_{ij}f|^2\,dxd\xi\\
&\leq \frac12\intrr (p\nu_{p-2}(x,\xi) + 2\nu_p(x,\xi))(v-\xi)\cdot \xi n^{m\theta}|\pa_{ij}f|^2\,dxd\xi\\
&\le C\|n\|_{L^\infty}^{m\theta}(1+\|v\|_{L^\infty})^2\|\pa_{ij}f\|_{L_\nu^{2,p}}^2 - \frac12\intrr\nu_p(x,\xi)|\xi|^2n^{m\theta} |\pa_{ij} f|^2\,dxd\xi.
\end{aligned}\]
Thus, we can get

\[
\sfI_2 \le C\|n\|_{L^\infty}^{m\theta-2}(1+\|n\|_{L^\infty}+\|\nabla n\|_{H^2})^2(1+\|v\|_{H^3})^2\|f\|_{H_\nu^{2,p}}^2 -\frac14\intrr \nu_p(x,\xi)|\xi|^2 n^{m\theta}|\pa_{ij}f|^2\,dxd\xi.
\]

For the estimate of $\sfI_3$, we get
\begin{align*}
\sfI_3 &\leq C\|\pa_{ij} f\|_{L^{2,p}_\nu}\|n\|_{L^\infty}^{m\theta-1}  \lt(\|\pa_i n\|_{L^\infty}\|\pa_j v\|_{L^\infty}\|\nabla_\xi f\|_{L^{2,p}_\nu}+ \|n\|_{L^\infty}\|\pa_j v\|_{L^\infty}\|\nabla_\xi \pa_i f\|_{L^{2,p}_\nu} \rt)\cr
&\quad + C\|n\|_{L^\infty}^{m\theta}\intrr \nu_p(x,\xi) | \pa_{ij} f| |\pa_{ij} v| |\nabla_\xi f|\,dxd\xi.
\end{align*}
If we set 
\bq\label{new_g}
g(x) := \lt(\intr \nu_p(x,\xi) |\nabla_\xi f|^2\,d\xi \rt)^{1/2},
\eq
then by H\"older's inequality we find
\[
|\nabla g(x)| \leq \frac p2 g(x)+ \lt(\intr \nu_p(x,\xi) |\nabla \nabla_\xi f|^2\,d\xi \rt)^{1/2},
\]
and thus by Sobolev embedding $H^1(\R^3) \subseteq L^3(\R^3)$
\[
\|g\|_{L^3} \leq C\|g\|_{H^1} \leq C\|f\|_{H^{2,p}_\nu}
\]
for some $C>0$. This implies that  the last term can be estimated as
$$\begin{aligned}
&C\|n\|_{L^\infty}^{m\theta}\intrr \nu_p(x,\xi) | \pa_{ij} f| |\pa_{ij} v| |\nabla_\xi f|\,dxd\xi\cr
&\quad \leq C\|n\|_{L^\infty}^{m\theta}\intr|\pa_{ij} v|  \lt(\intr \nu_p(x,\xi) | \pa_{ij} f|^2\,d\xi\rt)^{1/2}  \lt(\intr \nu_p(x,\xi) |\nabla_\xi f|^2\,d\xi \rt)^{1/2} dx\cr
&\quad \leq C\|n\|_{L^\infty}^{m\theta}\|\pa_{ij}v\|_{L^6}\|\pa_{ij}f\|_{L^{2,p}_\nu}\|g\|_{L^3}\cr
&\quad \leq C\|n\|_{L^\infty}^{m\theta}\|\nabla^3 v\|_{L^2}\|f\|_{H^{2,p}_\nu}^2.
\end{aligned}$$
Hence we have
\[
\sfI_3 \ls \|n\|_{L^\infty}^{m\theta-1}(1+\|n\|_{L^\infty}+\|\nabla n\|_{H^2} )(1+\|v\|_{H^3} )\|f\|_{H^{2,p}_\nu}^2.
\]
In a similar way, we also deduce
$$\begin{aligned}
\sfI_4 &\ls \|n\|_{L^\infty}^{m\theta-2}\lt(\|\nabla n\|_{L^\infty}^2 + \|n\|_{L^\infty}\|\nabla^2 n\|_{L^6} + \|n\|_{L^\infty}\|\nabla n\|_{L^\infty}\rt)\|f\|_{H^{2,p}_\nu}^2\cr
&\ls \|n\|_{L^\infty}^{m\theta-2} \|\nabla n\|_{H^2} \lt(\|\nabla n\|_{H^2} + \|n\|_{L^\infty}\rt)\|f\|_{H^{2,p}_\nu}^2
\end{aligned}$$
and
$$\begin{aligned}
\sfI_6 &\ls \|\pa_{ij} f\|_{L^{2,p}_\nu}\|n\|_{L^\infty}^{m\theta-1}(\|\nabla n\|_{L^\infty}\|\pa_j f\|_{L^{2,p}_\nu} +\|n\|_{L^\infty} \|\pa_{ij}f\|_{L^{2,p}_\nu})\cr
&\ls \|n\|_{L^\infty}^{m\theta-1}(\|\nabla n\|_{H^2} + \|n\|_{L^\infty})\|f\|_{H^{2,p}_\nu}^2.
\end{aligned}$$
Finally, we also separately estimate $\sfI_5$ as
$$\begin{aligned}
\sfI_5 &=-\intrr \nu_p(x,\xi) \pa_{ij}f \cdot (\pa_{ij}(n^{m\theta}) (v-\xi)\cdot\nabla_\xi f  + \pa_j(n^{m\theta})\pa_iv \cdot \nabla_\xi f)\,dxd\xi\\
&\quad -\intrr \nu_p(x,\xi)\pa_{ij}f \cdot (\pa_j(n^{m\theta})(v-\xi)\cdot\nabla_\xi \pa_j f)\,dxd\xi\\
&=: \sfI_5^1 + \sfI_5^2 + \sfI_5^3.
\end{aligned}$$
For $\sfI_5^1$, the estimates for \eqref{new_g} and Young's inequality give
\begin{align*}
\sfI_5^1 &= -m\theta\intrr \nu_p(x,\xi)\pa_{ij}f ((\pa_{ij}n) n^{m\theta-1} + (m\theta-1) (\pa_i n)(\pa_j n) n^{m\theta-2})(v-\xi)\cdot\nabla_\xi f\,dxd\xi\\
&\le C\|n\|_{L^\infty}^{m\theta-1}\|v\|_{L^\infty}\intrr\nu_p(x,\xi)|\pa_{ij} f| |\pa_{ij}n||\nabla_\xi f|\,dxd\xi + C\|n\|_{L^\infty}^{m\theta-2}\|\nabla n\|_{L^\infty}^2\|v\|_{L^\infty}\|f\|_{H_\nu^{2,p}}^2\\
&\quad +m\theta \|n\|_{L^\infty}^{\frac{m\theta}{2}-1}\intrr \nu_p(x,\xi) n^{\frac{m\theta}{2}} |\xi| |\pa_{ij}f|  |\pa_{ij}n| |\nabla_\xi f| \,dxd\xi \\
&\quad+ m\theta(m\theta-1)\|\nabla n\|_{L^\infty}^2 \intrr \nu_p(x,\xi)n^{m\theta-2} |\xi| |\pa_{ij} f| |\nabla_\xi f|\,dxd\xi\\
&\le C\|n\|_{L^\infty}^{m\theta-1}(1+\|\nabla n\|_{H^2})^2 \|v\|_{L^\infty}\|f\|_{H_\nu^{2,p}}^2\\
&\quad + m\theta\|n\|_{L^\infty}^{\frac{m\theta}{2}-1}\|\pa_{ij} n\|_{L^6}\lt(\intrr \nu_p(x,\xi)|\xi|^2 n^{m\theta}|\pa_{ij} f|^2\,dxd\xi \rt)^{1/2} \lt\|\intr \nu_p(x,\xi)|\nabla_\xi f|^2\,d\xi\rt\|_{L^3}\\
&\quad +\frac{1}{16}\intrr \nu_p(x,\xi)|\xi|^2 n^{m\theta} |\pa_{ij}f|^2\,dxd\xi + C\|\nabla n\|_{L^\infty}^4\|n\|_{L^\infty}^{m\theta-4}\|\nabla_\xi f\|_{L_\nu^{2,p}}^2\\
&\le C\|n\|_{L^\infty}^{m\theta-4}(1+\|n\|_{L^\infty} + \|\nabla n\|_{H^2})^4 (1+\|v\|_{L^\infty})\|f\|_{H_\nu^{2,p}}^2  \\
&\quad +\frac{1}{8}\intrr \nu_p(x,\xi)|\xi|^2 n^{m\theta} |\pa_{ij}f|^2\,dxd\xi,
\end{align*}
where we used $m\theta \ge 4$.

We directly estimate $\sfI_5^2$ as
\[
\sfI_5^2 \le m\theta \|n\|_{L^\infty}^{m\theta-1} \|\nabla n\|_{L^\infty}\|\nabla v\|_{L^\infty} \|f\|_{H_\nu^{2,p}}^2.
\]
Similarly to $\sfI_5^1$, we estimate $\sfI_5^3$ as
\[\begin{aligned}
\sfI_5^3 &\le C\|n\|_{L^\infty}^{m\theta-1}\|\nabla n\|_{L^\infty}\|v\|_{L^\infty}\|f\|_{H_\nu^{2,p}}^2\\
&\quad + m\theta\|\nabla n\|_{L^\infty}\intrr \nu_p(x,\xi)n^{m\theta-1} |\xi||\pa_{ij} f| |\nabla_\xi \pa_j f|\,dxd\xi\\
&\le C\|n\|_{L^\infty}^{m\theta-1}\|\nabla n\|_{L^\infty}\|v\|_{L^\infty}\|f\|_{H_\nu^{2,p}}^2\\
&\quad + \frac18 \intrr \nu_p(x,\xi)|\xi|^2 n^{m\theta} |\pa_{ij} f|^2\,dxd\xi + C\|n\|_{L^\infty}^{m\theta-2}\|\nabla n\|_{L^\infty}\|\nabla_\xi \pa_j f\|_{L_\nu^{2,p}}^2\\
&\le C\|n\|_{L^\infty}^{m\theta-2}(1+\|n\|_{W^{1,\infty}})^2(1+\|v\|_{L^\infty})\|f\|_{H_\nu^{2,p}}^2 + \frac18 \intrr \nu_p(x,\xi)|\xi|^2 n^{m\theta} |\pa_{ij} f|^2\,dxd\xi.
\end{aligned}\]
Hence we obtain
\[
\sfI_5 \le C\|n\|_{L^\infty}^{m\theta-4}(1+\|n\|_{L^\infty}+\|\nabla n\|_{H^2})^4(1+\|v\|_{H^3})\|f\|_{H_\nu^{2,p}}^2 + \frac14 \intrr \nu_p(x,\xi)|\xi|^2 n^{m\theta} |\pa_{ij} f|^2\,dxd\xi.
\]
Combining the estimates $\sfI_i, i=1,\dots,6$ yields
\bq\label{est_hf3}
\frac{d}{dt} \|\pa_{ij} f\|_{L^{2,p}_\nu}^2 \leq  C(1+\|n\|_{L^\infty})^{m\theta-4} (1+\|n\|_{L^\infty} +\|\nabla n\|_{H^2})^4 (1+\|v\|_{H^3} )^2 \|f\|_{H^{2,p}_\nu}^2.
\eq
We next find that 
\[
\pa_t \pa_{\xi_j}\pa_i f + \xi \cdot \nabla \pa_{\xi_j}\pa_i f + \pa_i \lt(n^{m\theta}(v-\xi)\cdot \nabla_\xi \pa_{\xi_j}f \rt) +\pa_{ij}f - 4\pa_i(n^{m\theta} \pa_{\xi_j}f) = 0.
\]
Then we get
\begin{align*}
\begin{aligned}
&\frac12\frac{d}{dt}\intrr \nu_p(x,\xi) |\pa_{\xi_j}\pa_i f|^2\,dxd\xi \cr
&\quad = - \intrr \nu_p(x,\xi) \pa_{\xi_j}\pa_i f \cdot \lt( \xi \cdot \nabla \pa_{\xi_j}\pa_i f  + \pa_i (n^{m\theta} (v-\xi)\cdot \nabla_\xi \pa_{\xi_j}f)   \rt)dxd\xi\cr
&\qquad - \intrr \nu_p(x,\xi) \pa_{\xi_j}\pa_i f \cdot \lt(   \pa_{ij} f - 4\pa_i(n^{m\theta} \pa_{\xi_j}f) \rt)dxd\xi\cr
&\quad =:\sfJ_1 + \sfJ_2 + \sfJ_3 + \sfJ_4.
\end{aligned}
\end{align*}
We can easily estimate $\sfJ_1$, $\sfJ_3$ and $\sfJ_4$ as
\[\begin{aligned}
\sfJ_1&\le C\|\pa_{\xi_j}\pa_i f\|_{L_\nu^{2,p}}^2,\\
\sfJ_3&\le \|f\|_{H_\nu^{2,p}}^2,\\
\sfJ_4&\le  C\|n\|_{L^\infty}^{m\theta-1}\|\nabla n\|_{L^\infty}\|\pa_{\xi_j}\pa_if\|_{L_\nu^{2,p}}\|\pa_{\xi_j}f\|_{L_\nu^{2,p}} + C\|n\|_{L^\infty}^{m\theta}\|\pa_{\xi_j}\pa_i f\|_{L_\nu^{2,p}}^2 \\
&\le C\|n\|_{L^\infty}^{m\theta-1}(\|n\|_{L^\infty}+\|\nabla n\|_{H^2})\|f\|_{H_\nu^{2,p}}^2.
\end{aligned}\]
For $\sfJ_2$, we use the previous arguments for $\sfI_5$ to yield 
\[\begin{aligned}
\sfJ_2&= -\intrr \nu_p(x,\xi)\pa_{\xi_j}\pa_i f \cdot (\pa_i(n^{m\theta}) (v-\xi)\cdot\nabla_\xi \pa_{\xi_j} f)\,dxd\xi\\
&\quad -\intrr\nu_p(x,\xi)\pa_{\xi_j}\pa_i f \cdot(n^{m\theta}\pa_i v \cdot \nabla_\xi \pa_{\xi_j} f)\,dxd\xi\\
&\quad +\frac12\intrr \nabla_\xi\cdot(\nu_p(x,\xi)(v-\xi)) n^{m\theta} |\pa_{\xi_j}\pa_i f|^2\,dxd\xi\\
&\le C\|n\|_{L^\infty}^{m\theta-1}(\|n\|_{L^\infty} + \|\nabla n\|_{H^2})(1+\|v\|_{H^3})^2\|f\|_{H_\nu^{2,p}}^2.
\end{aligned}\]
Thus, we obtain
\bq\label{est_hf4}
\frac{d}{dt} \|\pa_{\xi_j}\pa_i f\|_{L^{2,p}_\nu}^2 \leq  C(1+\|n\|_{L^\infty})^{m\theta-1} (1+\|n\|_{L^\infty} +\|\nabla n\|_{H^2}) (1+\|v\|_{H^3} )^2 \|f\|_{H^{2,p}_\nu}^2.
\eq

We finally take the differential operator $\pa_{\xi_i \xi_j}$ to the kinetic part of \eqref{lin_sys} to estimate
\begin{align}\label{est_hf5}
\begin{aligned}
\frac12&\frac{d}{dt}\intrr \nu_p(x,\xi) |\pa_{\xi_i \xi_j}f|^2 dxd\xi \\
&= -\intrr \nu_p(x,\xi) \pa_{\xi_i \xi_j} f \cdot \lt( \xi \cdot \nabla \pa_{\xi_i \xi_j} f + \pa_i \pa_{\xi_j} f + \pa_{\xi_i}\pa_j f  \rt)dxd\xi\cr
&\quad + \intrr \nu_p(x,\xi) \pa_{\xi_i \xi_j} f \cdot \lt(  5 n^{m\theta} \pa_{\xi_i \xi_j} f  -  n^{m\theta}(v-\xi)\cdot \nabla_\xi \pa_{\xi_i \xi_j} f \rt)dxd\xi\cr
&\le C(1+\|n\|_{L^\infty})^{m\theta}\|f\|_{H^{2,p}_\nu}^2 + \frac12\intrr \nabla_\xi \cdot (\nu_p(x,\xi)(v-\xi)) n^{m\theta}|\pa_{\xi_i \xi_j}f|^2\,dxd\xi\\
&\le  C(1+\|n\|_{L^\infty})^{m\theta}(1+\|v\|_{L^\infty})^2\|f\|_{H^{2,p}_\nu}^2.
\end{aligned}
\end{align}
We now combine this with \eqref{est_hf2}, \eqref{est_hf3}, \eqref{est_hf4}, and \eqref{est_hf5} to have 
$$\begin{aligned}
\frac{d}{dt} \|f\|_{H^{2,p}_\nu}^2 &\leq C(1+\|n\|_{L^\infty})^{m\theta-4} (1+\|n\|_{L^\infty}+\|\nabla n\|_{H^2} )^4 (1+\|v\|_{H^3})^2 \|f\|_{H^{2,p}_\nu}^2\cr
&\leq C\epsilon_0^{m\theta}\epsilon_3^2 \|f\|_{H^{2,p}_\nu}^2,
\end{aligned}$$
thus applying Gr\"onwall's lemma yields
\[
 \|f(\cdot,\cdot,t)\|_{H^{2,p}_\nu}^2 \leq \epsilon_0^2 \exp\lt(C\epsilon_0^{m\theta}\epsilon_3^2 t\rt)
\]
for $0 \leq t \leq T_1$, due to $1 < \epsilon_i$ for $i=0,1,\dots,4$. We then choose $T_2 = \min\lt\{ T_1, (\epsilon_0^{m\theta}\epsilon_3^2)^{-1}\rt\}$ to obtain
\[
\|f(\cdot,\cdot,t)\|_{H^{2,p}_\nu} \leq C\epsilon_0 \quad \mbox{for} \quad 0 \leq t \leq T_2.
\]
For the $H^{1,p-2}_\nu$-estimate of $\pa_t f$, we find
$$\begin{aligned}
\|\pa_t f\|_{L^{2,p-2}_\nu} &\ls \|\xi \cdot \nabla f + \nabla_\xi \cdot ( n^{m\theta} (v-\xi)f)\|_{L^{2,p-2}_\nu} \cr
&\ls \|\nabla f\|_{L^{2,p}_\nu} + \|n^{m\theta} f + n^{m\theta}(v-\xi)\cdot\nabla_\xi f\|_{L^{2,p-2}_\nu}\cr
&\ls \|\nabla f\|_{L^{2,p}_\nu} + \|n\|_{L^\infty}^{m\theta}\|f\|_{L^{2,p-2}_\nu} + \|n\|_{L^\infty}^{m\theta} \|v\|_{L^6}\|g\|_{L^3} + \|n\|_{L^\infty}^{m\theta} \|\nabla_\xi f\|_{L^{2,p}_\nu}\cr
&\ls  \|\nabla f\|_{L^{2,p}_\nu} + \|n\|_{L^\infty}^{m\theta}\|f\|_{H^{2,p}_\nu}(1 + \|\nabla v\|_{L^2}) \cr
&\ls \epsilon_0^{m\theta+1} \epsilon_1
\end{aligned}$$
and
\begin{align*}
&\|\pa_t \nabla_{(x,\xi)}f\|_{L^{2,p-2}_\nu} \cr
&\quad \ls \lt\|\nabla_{(x,\xi)}\lt(\xi \cdot \nabla f + \nabla_\xi \cdot ( n^{m\theta} (v-\xi)f)\rt)\rt\|_{L^{2,p-2}_\nu}\\
&\quad \ls \|\nabla f\|_{L^{2,p-2}_\nu} + \|\nabla \nabla_{(x,\xi)} f\|_{L^{2,p}_\nu} +  \|n^{m\theta-1} (\nabla n) f\|_{L^{2,p-2}_\nu} + \|n^{m\theta} \nabla_{(x,\xi)}f\|_{L^{2,p-2}_\nu}\\
&\qquad + \|n^{m\theta-1}(\nabla n) (v-\xi)\cdot \nabla_\xi f\|_{L^{2,p-2}_\nu} + \|n^{m\theta}\nabla v \cdot \nabla_\xi f\|_{L^{2,p-2}_\nu} + \|n^{m\theta}(v-\xi)\nabla_\xi \nabla_{(x,\xi)}f \|_{L^{2,p-2}_\nu}\\
&\quad\ls \|f\|_{H^{2,p}_\nu} + \|n\|_{L^\infty}^{m\theta-1}\|\nabla n\|_{L^\infty} \|f\|_{L^{2,p-2}_\nu} + \|n\|_{L^\infty}^{m\theta} \|\nabla_{(x,\xi)}f\|_{L^{2,p-2}_\nu} \cr
&\qquad + \|n\|_{L^\infty}^{m\theta-1} \|\nabla n\|_{L^\infty}(1+ \|v\|_{L^\infty}) \|\nabla_\xi f\|_{L^{2,p}_\nu}+ \|n\|_{L^\infty}^{m\theta} \|\nabla v\|_{L^6} \|g\|_{L^3}  \\
&\qquad + \|n\|_{L^\infty}^{m\theta} (1+\|v\|_{L^\infty})\|\nabla_\xi \nabla_{(x,\xi)}f\|_{L^{2,p}_\nu}\\
&\quad\ls \epsilon_0^{m\theta+1}\epsilon_2
\end{align*}
for $0 \leq t \leq T_2$, where $g$ is given as in \eqref{new_g}. This completes the proof.

%%%%%%%%%%%%%%%%%%%%%%%%%%%%%%%%%%%%%%%%%%%%%
%
%
%        thebibliography
%
%
%%%%%%%%%%%%%%%%%%%%%%%%%%%%%%%%%%%%%%%%%%%%%
\section{$H^3$-estimate of $u$ \& $H^2$-estimate of $\pa_t u$}\label{app_c}

In this part, we provide the rest of the proof of Lemma \ref{lem_u}. \\

\noindent Taking $k=3$ in \eqref{u_H2}, we find 
\begin{align*}
&\frac12\frac{d}{dt}\|\pa^3 u\|_{L^2}^2 \cr
&\quad = -\int_{\R^3} (v \cdot \nabla \pa^3 u)\cdot \pa^3 u\,dx - \int_{\R^3} \lt( \pa^3 (v \cdot \nabla u)  - v \cdot \nabla \pa^3 u\rt)\cdot \pa^3 u\,dx\\
&\qquad -\frac{\gamma}{\gamma-1} \int_{\R^3} \pa^3 \nabla(n^{\theta(\gamma-1)}) \cdot \pa^3 u\,dx - \int_{\R^3} (n^2 +\eta^2)\pa^3 Lu \cdot \pa^3 u\,dx \\
&\qquad - \int_{\R^3} \lt(\pa^3 ((n^2 +\eta^2)Lu - (n^2 + \eta^2)\pa^3 Lu\rt)\cdot \pa^3 u\,dx -\frac{\delta}{\delta-1} \int_{\R^3} \nabla (n^2) \pa^3 \ml v \cdot \pa^3 u\,dx \\
&\qquad - \int_{\R^3} \lt(\pa^3 (\nabla(n^2) \ml v) - \nabla (n^2) \pa^3 \ml v \rt)\cdot \pa^3 u\,dx +\intrr \pa^2 \lt(n^{\theta(m-1)} (v-\xi)f \rt) \cdot \pa^4 u\,dxd\xi\\
&\quad =: \sum_{i=1}^8 \sfK_i,
\end{align*}
where we first easily get
\[
\sfK_1 \ls \epsilon_3 \|\nabla^3 u\|_{L^2}^2, \quad \sfK_2 \ls \epsilon_3 \|u\|_{H^3}^2.
\]
The term $\sfJ_3$ can be treated as
\begin{align*}
\sfK_3 &= \theta\gamma \int_{\R^3} \pa^2(n^{\theta(\gamma-1)-1} \pa n) \pa^3 (\nabla \cdot u)\,dx\\
&\ls \int_{\R^3} \lt| \lt(n^{\theta(\gamma-1)-3} (\pa n)^3  + n^{\theta(\gamma-1)-2} (\pa n) (\pa^2 n) + n^{\theta(\gamma-1)-1}\pa^3 n\rt) \pa^3 (\nabla \cdot u)  \rt|\,dx\\
&\ls\lt( \|n\|_{L^\infty}^{\theta(\gamma-1)-4} \|\nabla n\|_{L^\infty}^2 \|\nabla n\|_{L^2} + \|n\|_{L^\infty}^{\theta(\gamma-1)-3} \|\nabla n\|_{L^\infty} \|\nabla^2 n\|_{L^2} + \|n\|_{L^\infty}^{\theta(\gamma-1)-2} \|\nabla^3 n\|_{L^2}\rt) \|n \pa^3 (\nabla u)\|_{L^2}\\
&\ls \epsilon_0^{\theta(\gamma-1)-1} \|n \pa^3 (\nabla u)\|_{L^2}.
\end{align*}
Similarly to $H^2$-estimates, $\sfK_4$ can be estimated as
\begin{align*}
\sfK_4 &\le -\frac{3\alpha}{4}\int_{\R^3} (n^2 + \eta^2) |\pa^3 \nabla u|^2\,dx -\frac{3(\alpha+\beta)}{4}\int_{\R^3} (n^2 + \eta^2) (\pa^3 (\nabla \cdot u))^2\,dx\\
&\quad + (8\alpha+4\beta) \|\nabla n\|_{L^\infty}^2 \|\pa^3 u\|_{L^2}^2.
\end{align*}
For $\sfK_5$, we have
\begin{align*}
\sfK_5 &\ls \int_{\R^3} n |\nabla^3 n| |Lu|  |\pa^3 u|\,dx + \int_{\R^3} |\nabla n| |\nabla^2 n| |Lu| |\pa^3 u|\,dx\\
&\quad + \int_{\R^3} | \nabla n|^2 |L \pa u| |\pa^3 u|\,dx + \int_{\R^3} n |\nabla^2 n| |L\pa u| |\pa^3 u|\,dx\\
&\quad + \int_{\R^3} n |\nabla n| |L \pa^2 u| |\pa^3 u|\,dx\\
&\ls \|\nabla^3 n\|_{L^2} \|Lu\|_{L^6} \|n\nabla^3 u\|_{L^3} + \|\nabla n\|_{L^\infty}\|\nabla^2 n\|_{L^3} \|Lu\|_{L^6} \|\pa^3 u\|_{L^2}\\
&\quad + \|\nabla n\|_{L^\infty}^2 \|L\pa u\|_{L^2} \|\pa^3 u\|_{L^2} + \|\nabla^2 n\|_{L^3} \|n \pa^3 u\|_{L^6} \|L \pa u\|_{L^2}\\
&\quad + \|\nabla n\|_{L^\infty} \|n L \pa^2 u\|_{L^2} \|\pa^3 u\|_{L^2}\\
&\ls  \epsilon_0 \|\nabla^3 u\|_{L^2} \lt( \|\nabla n\|_{L^\infty}\|\nabla^3 u\|_{L^2} + \|n \nabla^4 u\|_{L^2}\rt) + \epsilon_0^2 \|\nabla^3 u\|_{L^2}^2\\
&\ls \epsilon_0^2 \|\nabla^3 u\|_{L^2}^2 + \epsilon_0 \|\nabla^3 u\|_{L^2} \|n \nabla^4 u\|_{L^2}.
\end{align*}
The term $\sfK_6$ can be treated as
\begin{align*}
\sfK_6 &\ls \int_{\R^3} \lt|\pa (n\nabla n) \cdot \pa^2\ml v \cdot \pa^3 u\rt|\,dx + \int_{\R^3} n\lt| \nabla n \cdot \pa^2 \ml v \cdot \pa^4 u\rt|\,dx\\
&\ls \|\nabla n\|_{L^\infty}^2 \|\nabla^3 v\|_{L^2} \|\nabla^3 u\|_{L^2} + \|\nabla^2 n\|_{L^3} \|\nabla^{2}\ml v\|_{L^2} \|n\nabla^3 u\|_{L^6}\\
&\quad + \|\nabla n\|_{L^\infty} \|\nabla^2 \ml v\|_{L^2} \|n\nabla^4 u\|_{L^2}\\
&\ls \epsilon_0^2\epsilon_3 \|\nabla^3 u\|_{L^2} + \epsilon_0\epsilon_3 \|n\nabla^4 u\|_{L^2}.
\end{align*}
For $\sfK_7$, we deduce
\begin{align*}
\sfK_7 &\ls \lt|\int_{\R^3} \pa^3 (n \nabla n)\cdot \ml v \cdot\pa^3 u\,dx\rt|+ \sum_{m=1}^{2} \int_{\R^3} \lt| \pa^m (n\nabla n) \cdot \pa^{3-m} \ml v  \cdot \pa^3 u \rt|\,dx\\
&\ls  \lt|\int_{\R^3} (n \nabla \pa^3 n)\cdot \ml v \cdot\pa^3 u\,dx\rt| + \int_{\R^3} |\nabla n| |\nabla^3 n| |\ml v| |\pa^3 u|\,dx\\
&\quad+ \int_{\R^3} |\nabla^2 n|^2 |\ml v| |\pa^3 u|\,dx + \int_{\R^3} |\nabla n|^2 |\pa^2 \ml v| |\pa^3 u|\,dx \\
&\quad + \int_{\R^3} n |\nabla^2 n| |\pa^2 \ml v| |\pa^3 u|\,dx  +\int_{\R^3} |\nabla n| |\nabla^2 n| |\pa \ml v| |\pa^3 u|\,dx \\
&\quad + \int_{\R^3} n |\nabla^3 n| |\pa \ml v| |\pa^3 u|\,dx\\
&\ls  \int_{\R^3} n |\nabla^3 n| |\ml v| |\pa^4 u|\,dx + \int_{\R^3} |\nabla n| |\nabla^3 n| |\ml v| |\pa^3 u|\,dx \\
&\quad + \int_{\R^3} |\nabla^2 n|^2 |\ml v| |\pa^3 u|\,dx  + \int_{\R^3} |\nabla n|^2 |\pa^2 \ml v| |\pa^3 u|\,dx \\
&\quad+ \int_{\R^3} n |\nabla^2 n| |\pa^2 \ml v| |\pa^3 u|\,dx +\int_{\R^3} |\nabla n| |\nabla^2 n| |\pa \ml v| |\pa^3 u|\,dx \\
&\quad + \int_{\R^3} n |\nabla^3 n| |\pa \ml v| |\pa^3 u|\,dx\\
&\ls \|\nabla^3 n\|_{L^2}\|\ml v\|_{L^\infty}\|n\nabla^4 u\|_{L^2}+ \|\nabla n\|_{L^\infty}\|\nabla^3 n\|_{L^2}\|\ml v\|_{L^\infty}\|\pa^3 u\|_{L^2}\\
&\quad + \|\nabla^2 n\|_{L^3} \|\nabla^2 n\|_{L^6} \|\ml v\|_{L^\infty}\|\pa^3 u\|_{L^2} + \|\nabla n\|_{L^\infty}^2 \|\nabla^3 v\|_{L^2}\|\pa^3 u\|_{L^2}\\
&\quad + \|\nabla^2 n\|_{L^3}\|\nabla^3 v\|_{L^2} \|n \pa^3 u\|_{L^6} + \|\nabla n\|_{L^\infty}\|\nabla^2 n\|_{L^3} \|\pa \ml v\|_{L^6} \|\pa^3 u\|_{L^2}\\
&\quad + \|\nabla^3 n\|_{L^2}\|\pa\ml v\|_{L^3}\|n\pa^3 u\|_{L^6}\\
&\ls \epsilon_0\epsilon_3 \|n\nabla^4 u\|_{L^2} + \epsilon_0^2 \epsilon_3 \|\nabla^3 u\|_{L^2}.
\end{align*}
We estimate $\sfK_8$ as
$$\begin{aligned}
\sfK_8&\ls \intrr n^{\theta(m-1)-2} |\nabla n|^2 |v-\xi| f |\pa^4 u|\,dxd\xi + \intrr n^{\theta(m-1)-1}|\nabla^2 n| |v-\xi| f |\pa^4 u|\,dxd\xi\\
&\quad + \intrr n^{\theta(m-1)-1} |\nabla n| |\nabla v| f |\pa^4 u|\,dxd\xi + \intrr n^{\theta(m-1)-1} |\nabla n| |v-\xi| |\nabla f| |\pa^4 u|\,dxd\xi\\
&\quad + \intrr n^{\theta(m-1)}\lt( |\nabla^2 v|  f + |\nabla v| |\nabla f| + |v-\xi| |\nabla^2 f| \rt) |\pa^4 u|\,dxd\xi\\
&\ls \|n\|_{L^\infty}^{\theta(m-1)-3} \|\nabla n\|_{L^\infty}^2 \lt(\|v\|_{L^\infty} \lt\|\intr f\,d\xi \rt\|_{L^2} + \lt\|\intr |\xi|f\,d\xi \rt\|_{L^2} \rt)  \|n\nabla^4 u\|_{L^2} \cr
&\quad + \|n\|_{L^\infty}^{\theta(m-1)-2}\|\nabla^2 n\|_{L^6} \lt(\|v\|_{L^\infty}\lt\|\intr f\,d\xi \rt\|_{L^3} + \lt\|\intr |\xi| f\,d\xi \rt\|_{L^3}\rt)  \|n \nabla^4 u\|_{L^2}\\
&\quad + \|n\|_{L^\infty}^{\theta(m-1)-2} \|\nabla n\|_{L^\infty}\|\nabla v\|_{L^\infty} \lt\|\intr f\,d\xi \rt\|_{L^2} \|n\nabla^4 u\|_{L^2} \cr
&\quad + \|n\|_{L^\infty}^{\theta(m-1)-2} \|\nabla n\|_{L^\infty} \lt(\|v\|_{L^\infty} \lt\|\intr |\nabla f|\,d\xi \rt\|_{L^2} + \lt\|\intr |\xi||\nabla f|\,d\xi \rt\|_{L^2} \rt) \|n\nabla^4 u\|_{L^2} \\
&\quad + \|n\|_{L^\infty}^{\theta(m-1)-1} \lt(\|\nabla^2 v\|_{L^6}\lt\|\intr f\,d\xi \rt\|_{L^3} + \|\nabla v\|_{L^\infty}  \lt\|\intr |\nabla f|\,d\xi \rt\|_{L^2}    \rt) \|n\nabla^4 u\|_{L^2}\\
&\quad + \|n\|_{L^\infty}^{\theta(m-1)-1}   \lt(\|v\|_{L^\infty} \lt\|\intr |\nabla^2 f|\,d\xi \rt\|_{L^2} + \lt\|\intr |\xi||\nabla^2 f|\,d\xi \rt\|_{L^2}   \rt) \|n\nabla^4 u\|_{L^2}\\
&\ls \epsilon_0^{\theta(m-1)}\epsilon_3 \|n\nabla^4 u\|_{L^2}.
\end{aligned}$$
%Here we used Lemma \ref{lem_uf} and Sobolev embedding $H^1(\R^3) \subseteq L^3(\R^3)$ to get
%\[
%\lt\|\intr (1+ |\xi|) f\,d\xi \rt\|_{L^3}  \leq C\sum_{|\alpha| \leq 1} \lt(\intr \lt(\intr \nu(x,\xi) |\nabla^\alpha f|\,d\xi \rt)^2 dx\rt)^{1/2} \leq C\|f\|_{H^{1,p}_\nu} \leq C\epsilon_0
%\]
%for $p > 5$.

We combine all the estimates for $\sfK_i$'s and use Young's inequality to yield
\begin{align*}
\frac{d}{dt}&\|\nabla^3 u\|_{L^2}^2 + \alpha \int_{\R^3} (n^2 + \eta^2) |\nabla^{4} u|^2\,dx + (\alpha+\beta) \int_{\R^3} (n^2 + \eta^2) |\nabla^3 (\nabla \cdot u)|^2\,dx\\
&\le C\lt(\epsilon_0^{2\theta(\zeta-1)}\epsilon_3^2\|u\|_{H^3}^2 + \epsilon_0^{2\theta(\zeta-1)}\epsilon_3^2\rt)\\
&\le C\lt(\epsilon_0^{2\theta(\zeta-1)}\epsilon_3^2\|\nabla^3 u\|_{L^2}^2 + \epsilon_0^{2\theta(\zeta-1)+2}\epsilon_3^2\rt),
\end{align*}
for $0 \le t \le T_2$. Thus we also obtain
\[
\|\nabla^3 u(t)\|_{L^2}^2 + \int_0^t \|\sqrt{n^2(s)+\eta^2} \nabla^4 u(s)\|_{L^2}^2\,ds \le C\epsilon_0^2, \quad 0 \le t \le T_3.
\]

For the time derivative estimate, we  have from \eqref{lin_sys} that
\begin{align*}
\|\pa_t u\|_{L^2}^2 &\ls \lt\| v \cdot \nabla u + \theta \gamma n^{\theta(\gamma-1)-1} \nabla n + (n^2 + \eta^2) Lu + \frac{2\delta}{\delta-1} n \nabla n \cdot \ml v + n^{\theta(m-1)}\int_{\R^3} (v-\xi)f\,d\xi \rt\|_{L^2}\\
&\ls \|v\|_{L^6}\|\nabla u\|_{L^3} + \|n\|_{L^\infty}^{\theta(\gamma-1)-1}\|\nabla n\|_{L^2} + (1+\|n\|_{L^\infty}^2)\|\nabla^2 u\|_{L^2} + \|n\|_{L^\infty}\|\nabla n\|_{L^\infty}\|\ml v\|_{L^2} \\
&\quad + \|n\|_{L^\infty}^{\theta(m-1)}\lt(\|v\|_{L^6}\lt\|\intr f\,d\xi \rt\|_{L^3} + \lt\|\intr |\xi| f\,d\xi \rt\|_{L^3}\rt) \\
&\ls \epsilon_0 \epsilon_1 + \epsilon_0^{\theta(\gamma-1)} + \epsilon_0^3 + \epsilon_0^2\epsilon_1 + \epsilon_0^{\theta(m-1)+1} \epsilon_1\\
&\ls \epsilon_0^{\theta(\zeta-1)+1}\epsilon_1,
\end{align*}
\begin{align*}
&\| \pa_t \nabla u\|_{L^2}\cr
&\quad \ls \| \nabla(v \cdot \nabla u)\|_{L^2} + \|\nabla (n^{\theta(\gamma-1)-1}\nabla n)\|_{L^2} + \|n \nabla n \cdot Lu\|_{L^2} + \|(n^2+\eta^2) L\nabla u\|_{L^2}\\
&\qquad + \| \nabla (n\nabla n \ml v)\|_{L^2} + \lt\| \nabla\lt(n^{\theta(m-1)}\int_{\R^3}(v-\xi)f\,d\xi\rt)\rt\|_{L^2}\\
&\quad \ls \|v\|_{H^2}\|u\|_{H^3} + \|n\|_{L^\infty}^{\theta(\gamma-1)-2}\|\nabla n\|_{L^4}^2 + \|n\|_{L^\infty}^{\theta(\gamma-1)-1}\|\nabla^2 n\|_{L^2} + \|n\nabla n\|_{L^\infty} \|\nabla^2 u\|_{L^2}\\
&\qquad + (1+\|n\|_{L^\infty}^2)\|\nabla^3 u\|_{L^2} + \|\nabla n\|_{L^\infty}^2\|\ml v\|_{L^2} + \|n\|_{L^\infty}\|\nabla^2 n\|_{L^6}\|\ml v\|_{L^3} + \|n\nabla n\|_{L^\infty}\|\nabla^2 v\|_{L^2}\\
&\qquad + \|n\|_{L^\infty}^{\theta(m-1)-1}\lt(\|\nabla n\|_{L^\infty}\lt(\|v\|_{L^6}\lt\|\intr f\,d\xi \rt\|_{L^3} + \lt\|\intr |\xi| f\,d\xi \rt\|_{L^3}\rt) + \|n\|_{L^\infty}\|\nabla v\|_{L^6}\lt\|\intr f\,d\xi \rt\|_{L^3}\rt) \cr
&\qquad  + \|n\|_{L^\infty}^{\theta(m-1)}\lt(\|v\|_{L^6}\lt\|\intr |\nabla f|\,d\xi \rt\|_{L^3} + \lt\|\intr |\xi| |\nabla f|\,d\xi \rt\|_{L^3}\rt)\\
&\quad \ls  \epsilon_0 \epsilon_2 + \epsilon_0^{\theta(\gamma-1)} +\epsilon_0^3 + \epsilon_0^2\epsilon_2 + \epsilon_0^{\theta(m-1)+1}\epsilon_2\\
&\quad \ls \epsilon_0^{\theta(\zeta-1)+1}\epsilon_2,
\end{align*}
and
\begin{align*}
&\|\pa_t \nabla^2 u\|_{L^2} \cr
&\quad \ls\| \nabla^2(v \cdot \nabla u)\|_{L^2} + \|\nabla^2 (n^{\theta(\gamma-1)-1}\nabla n)\|_{L^2} + \|\nabla (n\nabla n)\cdot Lu\|_{L^2} + \|n \nabla n \cdot L\nabla u\|_{L^2} \\
&\qquad + \|(n^2+\eta^2) L\nabla^2 u\|_{L^2} + \| \nabla^2 (n\nabla n \ml v)\|_{L^2} + \lt\| \nabla^2\lt(n^{\theta(m-1)}\int_{\R^3}(v-\xi)f\,d\xi\rt)\rt\|_{L^2}\\
&\quad \ls \|v\|_{H^3}\|u\|_{H^3} + \|n\|_{L^\infty}^{\theta(\gamma-1)-3}\|\nabla n\|_{L^6}^3 + \|n\|_{L^\infty}^{\theta(\gamma-1)-2}\|\nabla n\|_{L^\infty}\|\nabla^2 n\|_{L^2} + \|n\|_{L^\infty}^{\theta(\gamma-1)-1}\|\nabla^3 n\|_{L^2}\\
&\qquad + \|\nabla n\|_{L^\infty}^2\|\nabla^2 u\|_{L^2} + \|n\|_{L^\infty}\|\nabla^2 n\|_{L^3} \|\nabla^2 u\|_{L^6} + \|n\nabla n\|_{L^\infty}\|\nabla^3 u\|_{L^2}\\
&\qquad + (1+\|n\|_{L^\infty})\|\sqrt{(n^2+\eta^2)} \nabla^4 u\|_{L^2} +  \|\nabla^2(n\nabla n)\|_{L^2}\|\ml v\|_{L^\infty} + \|n\nabla n\|_{L^\infty}\|\nabla^2 \ml v\|_{L^2}\\
&\qquad + \|n\|_{L^\infty}^{\theta(m-1)-2}\|\nabla n\|_{L^\infty}^2 \lt(\|v\|_{L^\infty}\lt\|\intr f\,d\xi \rt\|_{L^2} + \lt\|\intr |\xi| f\,d\xi \rt\|_{L^2} \rt)\\
&\qquad + \|n\|_{L^\infty}^{\theta(m-1)-1} \|\nabla^2 n\|_{L^6} \lt(\|v\|_{L^\infty}\lt\|\intr f\,d\xi \rt\|_{L^3} + \lt\|\intr |\xi| f\,d\xi \rt\|_{L^3}\rt)  \\
&\qquad + \|n\|_{L^\infty}^{\theta(m-1)-1}\|\nabla n\|_{L^\infty}\lt(\|\nabla v\|_{L^\infty}\lt\|\intr f\,d\xi\rt\|_{L^2} + \lt(\|v\|_{L^\infty}\lt\|\intr |\nabla f|\,d\xi \rt\|_{L^3} + \lt\|\intr |\xi| |\nabla f|\,d\xi \rt\|_{L^3}\rt)  \rt)\\
&\qquad + \|n\|_{L^\infty}^{\theta(m-1)}\lt(\|\nabla^2 v\|_{L^6}\lt\|\intr f\,d\xi \rt\|_{L^3} + \|\nabla v\|_{L^\infty}\lt\|\intr |\nabla f|\,d\xi\rt\|_{L^2}\rt)\cr
&\qquad  +\|n\|_{L^\infty}^{\theta(m-1)} \lt(\|v\|_{L^\infty}\lt\|\intr |\nabla^2 f|\,d\xi \rt\|_{L^2} + \lt\|\intr |\xi| |\nabla^2 f|\,d\xi \rt\|_{L^2}\rt)\\
&\quad \ls \epsilon_0 \epsilon_3 + \epsilon_0^{\theta(\gamma-1)} + \epsilon_0^3 + \epsilon_0 \|\sqrt{(n^2+\eta^2)} \nabla^4 u\|_{L^2} + \epsilon_0^2\epsilon_3 + \epsilon_0^{\theta(m-1)+1}\epsilon_3\\
&\quad \ls \epsilon_0^{\theta(\zeta-1)+1}\epsilon_3 + \epsilon_0 \|\sqrt{(n^2+\eta^2)} \nabla^4 u\|_{L^2}
\end{align*}
for $0 \le t \le T_3$. Hence we conclude 
\[
\int_0^t \|\pa_t \nabla^2 u(s)\|_{L^2}^2\,ds \ls T_3\epsilon_0^{2\theta(\zeta-1)+2}\epsilon_3^2 + \epsilon_0 \int_0^{T_3}  \|\sqrt{(n^2(s)+\eta^2)} \nabla^4 u(s)\|_{L^2}^2\,ds\ls \epsilon_0^3,
\]
for $0 \le t \le T_3$.
%%%%%%%%%%%%%%%%%%%%%%%%%%%%%%%%%%%%%%%%%%%%%
%
%
%        thebibliography
%
%
%%%%%%%%%%%%%%%%%%%%%%%%%%%%%%%%%%%%%%%%%%%%%

\end{document}